\documentclass[10pt]{amsart}
\usepackage{amsmath}
  \usepackage{graphics} 
\usepackage{graphicx}  
  \usepackage{epsfig} 
 \usepackage[colorlinks=true]{hyperref}
  \usepackage{float}
  \usepackage{caption}
  \usepackage[english]{babel}
\usepackage{amsmath}
\usepackage{wrapfig}
\usepackage{graphicx}
\usepackage{subcaption}
\usepackage{stackengine}
\newcommand\dhookrightarrow{\mathrel{%
  \ensurestackMath{\stackanchor[.1ex]{\hookrightarrow}{\hookrightarrow}}
}}
\usepackage{lscape}
\usepackage{amssymb}


\setcounter{topnumber}{2}
\setcounter{bottomnumber}{2}
\setcounter{totalnumber}{4}

\usepackage{amsthm}
\theoremstyle{plain}

\newtheorem*{theorem*}{Theorem}
\newtheorem{theorem}{Theorem}[section]

\newtheorem{lemma}[theorem]{Lemma}

\newtheorem{conjecture}{Conjecture}

\theoremstyle{definition}
\newtheorem{definition}[theorem]{Definition}
\newtheorem{remark}{Remark}

\title[ FTE competition ] 
    {The effect of ``very fast" strategies on two species competition}

 \keywords{ Finite time extinction, Coexistence}

\begin{document}
\maketitle

\centerline{\scshape  Aniket Banerjee$^{1}$, Vaibhava Srivastava$^{1}$ and Rana D. Parshad$^{1}$}
\medskip
{\footnotesize

   \medskip
   
    \centerline{1) Department of Mathematics,}
 \centerline{Iowa State University,}
   \centerline{Ames, IA 50011, USA.}

 }

\begin{abstract}


 We consider the effect of finite time extinction mechanisms (FTEM) such as (1) semi-linear harvesting terms, (2) quasi-linear fast diffusion terms
 on two species Lokta-Volterra competition models. We show that these mechanisms can alter classical dynamics of competitive exclusion, and weak and strong competition by acting only on a \emph{small} portion of the weaker competitors' population,
analogous to small defector populations in game theory \cite{DC23}.
 In particular, a stronger competitors population, with a few individuals dispersing (``defecting") very quick, could exhibit bi-stability, as well as competitive exclusion \emph{reversal}. The non-linear harvesting is applied to aphid-soybean crop systems, wherein novel dynamics are observed. Applications  to bio-control of invasive pests such as the soybean aphid are discussed. 

\end{abstract}

 \section{Introduction}

The classical two-species Lotka-Volterra competition model has been rigorously investigated in the literature \cite{LY23, Cantrell2003}. It models two competing species via considering growth  inter and intra species competition \cite{Brown1980, Lou2008}. It predicts well-observed states in population biology, of co-existence, competitive exclusion of one competitor, and bi-stability - and finds numerous applications in population ecology, invasion science, evolutionary biology, epidemics, economics and game theory, \cite{Lou2008, Cantrell2003, Cantrell2004, CC18, DC23, LY23}. Note, the equilibrium states are achieved only asymptotically, as is the case in many differential equation population models.
Competition theory predicts that two competing species whose niches overlap perfectly, cannot coexist, and one must competitively exclude the other. The classical Lokta-Volterra ODE competition model predicts this dynamic under certain parametric regimes. Other parametric regimes predict initial condition-dependent outcomes (strong competition) or stable coexistence (weak competition). Analogously, the spatially explicit Lokta-Volterra model predicts that the slower diffusing competitor will always exclude the faster one, given that they have the same kinetics.

Invasive species are most successful in environments where they lack close relatives \cite{brandt09, D59}. This has been confirmed in experiments, where invasion success in microbial communities \emph{increases} as phylogeny or ``relatedness" between invader and invadee decreases \cite{jiang2010experimental}. Thus an invasion is most likely to be successful, if the invasive species faces low intraspecies competition, whilst being a superior interspecies competitor \cite{violle2011phylogenetic, O89}. 
 However in reality a successful invasion results from the interaction of a myriad of factors, biological, environmental, landscape, and temporal - this is quantified via the concept of a species \emph{ecological niche}\cite{chesson2000mechanisms, holt2009bringing}.
To fix ideas, \emph{niche space} can be thought of as a continuum, similar to $\mathbb{R}^{4}_{+}$, where the four axes are physical and environmental factors (say climate), biological factors (say resources), space and time. A species \emph{niche} is the \emph{response} it has to each point in the space, and the \emph{effect} it has at each point \cite{chesson2000mechanisms, shea2002community}. The niche-based hypothesis posits invasive species are dexterous at using unexploited resources - that is filling \emph{vacant} niches, or broadening their niche breadth if the opportunity presents itself \cite{shea2002community, lekevivcius2009vacant}. 

In \cite{parshad2021some} we decided to consider the effect of creating a vacant niche very rapidly, via finite time extinction mechanisms (FTEM). There are several motivations for studying FTEM. In classical biological control, pest populations can \emph{rebound} from levels as close to extinction as one pleases \cite{P19}. This is well observed with soybean aphids (\emph{Aphis glycines}), an invasive pest on soybean crops, particularly in the North-central US.
Recent work in epidemics has considered a class of susceptible-infected models with non-smooth incidence functions, that can lead to host extinction in finite time - yet are seen to be better fitted to data collected than smooth systems \cite{FCGT18, FT18, G08, F02, D97}. Non-smooth responses have been considered analytically in the predator-prey literature as well \cite{B12, S97, S99, BS19, KRM20}, and in fitting  various data \cite{L12, R05, I08, M04}.

Consider a model, for the competition dynamics between a strong(er) invader $u$ and weak(er) invadee $v$, permitting multiple locally stable equilibria. The state $(u^{*}, 0)$ would imply invasion success \cite{L16}, or the competitive exclusion of $v$. Managers would aim to rather attain $(0,v^{*})$ - eradication or $(u^{*}, v^{*})$ - where $u^{*}$ was at a \emph{manageable} level. These states would imply invasion failure \cite{chesson2000mechanisms, L16}. We show one can \emph{reverse} invasion success by strategically \emph{increasing} the rate of attraction to the weak(er) invadees extinction state, thereby creating a niche opportunity for the invader, occupied prior by the invadee. This will indirectly \emph{increase} intraspecific competition among the invaders as they attempt to fill the niche and facilitate invasion failure.

In the current manuscript, we show the following,

\begin{itemize}

\item A two species ODE Lotka-Volterra competition model, with a semi-linear harvesting term exhibiting FTEM can lead to bi-stability, via Lemma \ref{stable_equi} and Theorem \ref{th:stability}, also see Fig. \ref{int_stable}. FTEM can also enable a competitor to avoid competitive exclusion and persist, for various regimes of initial conditions. This is seen via Theorem \ref{thm:l2}, also see Fig. \ref{no_int}. To this end, the harvesting needs to be performed only on a portion of the weaker competitors population. This is seen via Theorem \ref{th:existence}.

\item A new spatially explicit model is introduced, in the form of a quasi-linear PDE. Herein a certain portion of the weaker population moves at a faster rate than the stronger population. Degeneracy theory is applied to understand solutions to this model system, see definition \ref{def:weak}. Also see Theorem \ref{classic_pde} and Fig. [\ref{fig:pde_ce_sim1}(A),\ref{fig:pde_ce_sim2}(A)].

\item FTEM in this PDE model, can cause the slower diffuser to \emph{loose}, via Theorem \ref{FFTE}, as long as there are a few ``very" fast-moving individuals. Refer to Fig. [\ref{fig:pde_ce_sim1}(B),\ref{fig:pde_ce_sim2}(B)].

\item This framework is applied to a recent soybean aphid-soybean plant model, where novel dynamics are observed. See  Fig. \ref{aphid_vir}.


\end{itemize}
\section{Prior Results}

\subsection{General competition Model}

Consider the classical two-species Lotka-Volterra ODE competition model,

\begin{equation}
\label{Eqn:1}
\frac{du}{dt} = a_1 u - b_1 u^2 - c_1 uv, \    \frac{dv}{dt} = a_2 v - b_2 v^2 - c_2 uv.
\end{equation}

where $u$ and $v$ are the population densities of two competing species, $a_1$ and $a_2$ are the intrinsic (per capita) growth rates, $b_1$ and $b_2$ are the intraspecific competition rates, $c_1$ and $c_2$ are the interspecific competition rates. All parameters considered are positive. The dynamics of this system are well studied \cite{Murray93}. We recap these briefly,

\begin{itemize}
    \item $E_0 = (0,0)$ is always unstable.
    \item $E_u = (\frac{a_1}{b_1},0)$ is globally asymptotically stable if $\dfrac{a_{1}}{a_{2}} > \max\left\lbrace\dfrac{b_{1}}{c_{2}},\dfrac{c_{1}}{b_{2}}\right\rbrace$. Herein $u$ is said to competitively exclude $v$.
    \item $E_v = (0,\frac{a_2}{b_2})$ is globally asymptotically stable if $\dfrac{a_{1}}{a_{2}}<\min\left\lbrace\dfrac{b_{1}}{c_{2}},\dfrac{c_{1}}{b_{2}}\right\rbrace$. Herein $v$ is said to competitively exclude $u$. 
    \item $E^* = \Big(\frac{a_1b_2-a_2c_1}{b_1b_2-c_1c_2},\frac{a_2b_1-a_1c_2}{b_1b_2-c_1c_2}\Big)$ exists when $b_1b_2-c_1c_2 \neq 0$. The positivity of the equilibrium holds if $\frac{c_2}{b_1}<\frac{a_2}{a_1}<\frac{b_2}{c_1}$ and is globally asymptotically stable if $b_1b_2-c_1c_2>0$. This is said to be the case of weak competition.
   
   \item If $b_1b_2-c_1c_2<0$, then $E^* = \Big(\frac{a_1b_2-a_2c_1}{b_1b_2-c_1c_2},\frac{a_2b_1-a_1c_2}{b_1b_2-c_1c_2}\Big)$ is unstable as a saddle. In this setting, one has an initial condition dependent attraction to either $E_u(\frac{a_1}{b_1},0)$ or $E_v(0,\frac{a_2}{b_2})$. This is the case of strong competition.
\end{itemize}

\subsection{Prior work on FTEM}
The following model was introduced in \cite{parshad2021some} as a means to show that finite time extinction mechanism (FTEM) can alter the above classical dynamics.

\begin{equation}
\label{eq:Ge1}
\left\{ \begin{array}{ll}
\dfrac{du }{dt} &~ = a_{1}u - b_{1}u^{2} - c_{1}u^{p} v ,   \ 0 < p  \leq 1,         \\[2ex]
\dfrac{dv }{dt} &~ =  a_{2} v - b_{2} v^{2} - c_{2} uv^{q}, \ 0 < q \leq 1.
\end{array}\right.
\end{equation}

We see that the classical model is a special case of the above when $p=q=1$.
Note, $0 < p < 1, q=1$, allows for \emph{finite} time extinction of $u$,
 and $ p=1, 0 < q < 1$, allows for \emph{finite} time extinction of $v$. 
  There is also the more complex case when $0 < p < 1, 0 < q < 1$. If $p=q=1$, and $\dfrac{a_{1}}{a_{2}} > \max\left\lbrace\dfrac{b_{1}}{c_{2}},\dfrac{c_{1}}{b_{2}}\right\rbrace$, then $u$ is said to competitively exclude $v$, and $(\frac{a_1}{b_1},0)$ is globally asymptotically stable. One can investigate the effect of $0<q<1$ on this situation. To this end
 the following result \cite{parshad2021some} is recapped,
 \begin{theorem}
\label{thm:l2}
Consider \eqref{eq:Ge1}, with $p=1$, $\frac{a_{1}}{c_{1}} > \frac{a_{2}}{b_{2}}$, $(a_{2})^{2}b_{1} + 2a_{2}c_{1}c_{2}> 4a_{1}b_{2}c_{2}$. Then there exists a $q^{*} \in (0, 1)$, s.t. for any $q^{*} < q <1$ there is no interior equilibrium, for $q=q^{*}$ there is a unique non-hyperbolic equilibrium and for $0 < q < q^{*} $, there exists two interior equilibrium, a saddle, and a nodal sink.  
\end{theorem}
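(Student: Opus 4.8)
The plan is to reduce the two-dimensional equilibrium problem to counting roots of a single scalar function on an interval, to exploit a convexity property to cap that count at two, and then to track the sign of the minimum value as $q$ varies. Since $p=1$, the $u$-nullcline $a_1 - b_1 u - c_1 v = 0$ gives $u = (a_1 - c_1 v)/b_1$, positive exactly for $v < a_1/c_1$. Substituting into the $v$-nullcline $c_2 u = (a_2 - b_2 v)v^{1-q}$ and clearing denominators, an interior equilibrium corresponds to a zero of
\[
F(v;q) := c_2(a_1 - c_1 v) - b_1(a_2 - b_2 v)\, v^{1-q}
\]
on $(0, a_2/b_2)$, where both expressions for $u$ are positive (the hypothesis $a_1/c_1 > a_2/b_2$ ensures $a_1 - c_1 v > 0$ on this interval). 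One checks immediately that $F(0^{+};q) = c_2 a_1 > 0$ and that $F(a_2/b_2;q) = c_2(a_1 - c_1 a_2/b_2) > 0$ is a $q$-independent positive constant, so the whole question is whether $F$ dips below zero in between.

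First I would prove that $F(\cdot;q)$ is strictly convex. Writing $G(v) = (a_2 - b_2 v)v^{1-q}$, a direct computation gives $G''(v) = -(1-q)v^{-q-1}\big(a_2 q + b_2(2-q)v\big) < 0$ for $v>0$, hence $F'' = -b_1 G'' > 0$. Strict convexity forces a unique critical point $v_c(q)$ (a minimum), so $F$ has at most two zeros; with the positive boundary values, interior equilibria exist iff $m(q) := F(v_c(q);q) < 0$ (and then there are exactly two, straddling $v_c$, which is automatically interior since $F>0$ at both ends). The theorem thus reduces to showing $m$ changes sign exactly once on $(0,1)$. I would pin the two ends: for $q$ near $1$, $F(\cdot;q)$ is a small perturbation of the strictly positive \emph{linear} function $F(\cdot;1)$ (positivity being the competitive-exclusion regime in force here), and a short uniform estimate, treating $v\to0$ separately where $F\to c_2 a_1>0$, yields $m(1^{-})>0$; for $q$ near $0$, $F(\cdot;0)=b_1 b_2 v^2 - (c_1 c_2 + a_2 b_1)v + c_2 a_1$ has negative minimum precisely when its discriminant is positive, and the stated hypothesis $a_2^2 b_1 + 2 a_2 c_1 c_2 > 4 a_1 b_2 c_2$ is a clean sufficient condition for this (it implies $(c_1 c_2 + a_2 b_1)^2 > 4 a_1 b_1 b_2 c_2$), giving $m(0^{+})<0$.

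The crux is the single sign change. By the envelope theorem,
\[
m'(q) = \tfrac{\partial F}{\partial q}(v_c(q);q) = b_1(a_2 - b_2 v_c)\, v_c^{1-q}\,\ln v_c ,
\]
so $\operatorname{sign} m'(q) = \operatorname{sign}(\ln v_c(q))$. The delicate point is that $v^{1-q}$ reverses its $q$-monotonicity across the scale $v=1$, so I must control the position of $v_c$ relative to $1$. Here I would use that $v_c$ always exceeds the abscissa $v_{\max} = (1-q)a_2/((2-q)b_2)$ of the peak of $G$ (since $F'(v_{\max}) = -c_1 c_2 < 0$), that the competitive-exclusion and discriminant hypotheses force $a_2/b_2 > 2$ and hence $v_c > v_{\max}\to a_2/(2b_2) > 1$ as $q\to0$, and that the equation $v_c(q)=1$ is \emph{linear} in $q$ and so has at most one root. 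Therefore $\ln v_c$ changes sign at most once, starting positive near $q=0$; thus $m$ is increasing then possibly decreasing, and with $m(0^{+})<0<m(1^{-})$ it has a unique zero $q^{*}\in(0,1)$, with $m<0$ on $(0,q^{*})$ and $m>0$ on $(q^{*},1)$. I expect this sign control of $\ln v_c(q)$ through $v=1$ to be the main obstacle.

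Finally, the classification follows from a Jacobian identity. Writing $\dot u = P$, $\dot v = Q$ for the right-hand sides of \eqref{eq:Ge1}, the nullcline relations give $\partial_u P = -b_1 u$, $\partial_v Q = (1-q)a_2 - (2-q)b_2 v$, and after simplification $\det J = u\,v^{q}\,F'(v)$ at any interior equilibrium. At the smaller root $v_- < v_c$ one has $F'(v_-)<0$, so $\det J<0$ and the point is a saddle; at the larger root $v_+ > v_c > v_{\max}$ one has $F'(v_+)>0$, so $\det J>0$, while $\partial_v Q(v_+)<0$ and $\partial_u P<0$ give $\operatorname{tr} J<0$, a stable sink (real eigenvalues near the collision, hence a node). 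At $q=q^{*}$ the roots merge at $v_c$, where $F'(v_c)=0$ forces $\det J=0$ with remaining eigenvalue $\operatorname{tr} J<0$, i.e.\ a single non-hyperbolic (saddle–node) equilibrium, exactly as asserted.
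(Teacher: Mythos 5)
First, a point of reference: this paper never proves Theorem \ref{thm:l2} --- it is explicitly ``recapped'' from \cite{parshad2021some} --- so your proposal can only be judged on its own merits, not against an in-paper argument. Much of your skeleton is sound and checks out: the reduction of interior equilibria to roots of $F(v;q)=c_2(a_1-c_1v)-b_1(a_2-b_2v)v^{1-q}$ in $(0,a_2/b_2)$, the convexity computation $G''(v)=-(1-q)v^{-q-1}\bigl(a_2q+b_2(2-q)v\bigr)<0$, the determinant identity $\det J=u\,v^{q}F'(v)$, and the analysis at $q\to0^+$ are all correct. Two smaller issues first: the displayed hypotheses of the theorem do \emph{not} include the exclusion condition $a_1c_2>a_2b_1$ that you invoke; it is only implicit in the surrounding text, and you genuinely need it --- without it your claim that the hypotheses force $a_2/b_2>2$ is false (take $a_1=a_2=b_1=b_2=1$, $c_1=1/2$, $c_2=1/10$, which satisfies both displayed hypotheses with $a_2/b_2=1$), and indeed the theorem itself becomes problematic. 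Also, ``nodal sink'' should hold for all $0<q<q^*$, not just near the collision; this is easy, since the off-diagonal Jacobian entries $(-c_1u)(-c_2v^q)$ have positive product, so the eigenvalues are always real, but your hedge only covers $q$ near $q^*$.

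The genuine gap is that you treat $v_c(q)$, the unconstrained minimizer of the convex function $F(\cdot;q)$ on $(0,\infty)$, as if it always lies in the biologically relevant interval $(0,a_2/b_2)$. Three steps silently require this: (i) ``interior equilibria exist iff $m(q)<0$''; (ii) $m(1^-)>0$; (iii) $\operatorname{sign}m'(q)=\operatorname{sign}\ln v_c(q)$, which uses $a_2-b_2v_c>0$. But under the full hypotheses $v_c$ can exit the interval. Take $a_1=25$, $a_2=10$, $b_1=b_2=1$, $c_1=2$, $c_2=1$: both displayed hypotheses hold ($140>100$, $12.5>10$) and so does exclusion ($a_1/a_2=2.5>\max\{b_1/c_2,c_1/b_2\}=2$), yet $F'(a_2/b_2;q)=-c_1c_2+b_1a_2(b_2/a_2)^q=-2+10^{1-q}$ is negative once $q>\ln 5/\ln 10\approx 0.70$, so $v_c(q)>a_2/b_2$ there. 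For such $q$ the two roots of $F$ straddling $v_c$ lie beyond $a_1/c_1$ and correspond to $u<0$, not to equilibria; and since $c_1c_2>b_1b_2$ here, $F(\cdot;1)$ is a \emph{decreasing} linear function, so your unconstrained $m(q)$ is negative for $q$ near $1$ (e.g.\ $F(20;0.9)=-15+10\cdot 20^{0.1}\approx-1.5<0$). Thus (i) and (ii) fail outright, and in (iii) the sign of $m'$ flips because $a_2-b_2v_c<0$; your conclusion ``$m$ increasing then possibly decreasing with $m(1^-)>0$'' is simply wrong for this admissible parameter set, even though the theorem's conclusion still holds for it. The repair stays within your framework but must be made: define $m(q)=\min_{[0,a_2/b_2]}F(\cdot;q)$; note that $F'(a_2/b_2;q)$ is monotone decreasing in $q$ (since $a_2>2b_2$ gives $b_2/a_2<1$), so the set where the minimum is attained at an interior critical point is an initial interval $(0,q_b)$, on which your envelope/sign argument is legitimate, while on $[q_b,1)$ one has $m(q)\equiv c_2\bigl(a_1-c_1a_2/b_2\bigr)>0$ identically. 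With that case split, the single-crossing count (and hence the existence and uniqueness of $q^*$) goes through.
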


\begin{remark}
It is a natural question to investigate the validity of the $\boxed{-c_{2} uv^{q}, \ 0 < q \leq 1}$ term, as a control mechanism in a laboratory setting.
Since $-c_{2} uv$ models classical inter-species competition, it is inherent to any system, and not pragmatic to manipulate. This raises the question of other reaction terms, that would be more amenable to laboratory manipulation.
\end{remark}

\subsection{The modified Model}

Consider the population dynamics of a species $v$ typically governed by an ODE,
$\frac{dv}{dt} =  f(v,u)$, where  dynamics such as growth, death, competition, and depredation are embedded in $f(v,u)$. One can define an operator $ \mathcal{L}: \mathbb{R} \mapsto \mathbb{R}$, which is a combination of a linear growth operator such as
 $ \mathcal{L}(v) = a_{1} v$, and a non-linear harvesting type operator such as where $\boxed{L^{*}(v) \approx -v^{p}}$, $0<p<1$.
We take the following approach. Define, 
\begin{eqnarray}
&&\mathcal{L}^{1}(v)  \nonumber \\
&=& (q\mathcal{L} + (1-q)L^{*})(v), \ 0<q<1  \  \nonumber \\
&=& q\mathcal{L}( v) + (1-q)L^{*}(v), \ 0<q<1 \  \nonumber \\
&=& (q)a_{1}v -  (1-q) v^{p-1}v),  \nonumber \\
\end{eqnarray}
 where $\boxed{L^{*}(v) \approx -v^{p}}$, $0<p<1$. Thus the operator $L^{*}(v)$ will play the role of the sub-linear harvesting term, so as to cause finite time extinction. However, this is only effective on a fraction $\left((1-q), q \in (0,1)\right)$ of the population.

\begin{remark}
Thus the growth operator $\mathcal{L}^{1}$, provides a way to formalize the action via which we have a fraction of the population growing as per the regular growth coefficient, while the other fraction is harvested at a sub-linear rate.
\end{remark}

\begin{remark}
Another interpretation of the non-linear operator $L^{*}$ could be a faction or group within the $v$ population that decides to divert from the incentives of the group as a collective whole. Such competitive mechanisms have been under intense recent investigation.
\end{remark}

The model is modified to bring in the effect of finite time extinction. The modified model is as follows:

\begin{equation}
\label{Eqn:2}
\begin{aligned}[t]
\frac{du}{dt} &= a_1 u - b_1 u^2 - c_1 uv, \\
\frac{dv}{dt} &= a_2 q v - b_2 v^2 - c_2 uv - (1-q) v^p.
\end{aligned}
\end{equation}

where $0 < p < 1$ and $0 < q <1 $.\\

\section{Equilibria and Stability of System (\ref{Eqn:2})}

\subsection{Existence of equilibria}\hfill\\

In this subsection, we perform the qualitative analysis of the system (\ref{Eqn:2}). Considering the biological implication of the system on ecological population, we are interested to study the dynamics of the system (\ref{Eqn:2}) in the closed first quadrant $\mathbb{R}^2_+$ in the (u,v) plane. It is obvious that $E_0(0,0)$ is an equilibrium point of the system (\ref{Eqn:2}). We can have two type of boundary equilibria $E_u(\frac{a_1}{b_1},0)$ which is trivial and $E_v(0,\Bar{v})$ as described in lemma \ref{Boundary_v}. The whole plane $\mathbb{R}_+^2$ is not the positively invariant under parameter $q$ for the system (\ref{Eqn:2}). So, the positively invariant subspace we have from classical model $\Gamma \{(u,v) : 0 \leq u \leq \frac{a_1}{b_1}, 0 \leq v \leq \frac{a_2}{b_2}\}$ contains all the equilibria of the system (\ref{Eqn:2}).

In this section, we will discuss the existence of the interior equilibria of the system (\ref{Eqn:2}) in the invariant set. In order to get the equilibria of the system (\ref{Eqn:2}), we consider the nullclines of both the population $u$ and $v$ of the system (\ref{Eqn:2}), which are given by:\\
\begin{equation}
\label{nullclines}
    \begin{aligned}
    a_1 u - b_1 u^2 - c_1 uv &= 0,\\
    a_2 q v - b_2 v^2 - c_2 uv - (1-q) v^p &= 0.
    \end{aligned}
\end{equation}

Simplifying system of equations (\ref{nullclines}) to get an explicit form in terms of population $v$ we get,
\begin{equation}
\label{nullcline}
    (a_2b_1q-c_2a_1)+(c_1c_2-b_1b_2)v-b_1(1-q)v^{p-1}=0
\end{equation}
We will study the dynamics of the polynomial (\ref{nullcline}) to find the number of equilibria possible for the system (\ref{Eqn:2}).

\begin{lemma}
 \label{Boundary_v}
 Let $v_{\phi}^{p-2}=\frac{b_2}{(1-q)(1-p)}$. Then we have:
 \begin{enumerate}
     \item If $q < \frac{b_2v_\phi(2-p)}{a_2(1-p)}$ then there does not exist any boundary equilibrium of the form $E_v(0,\Bar{v})$.
     \item If $q = \frac{b_2v_\phi(2-p)}{a_2(1-p)}$ then there exists an unique boundary equilibrium $E_v(0,\Bar{v})$.
     \item If $q > \frac{b_2v_\phi(2-p)}{a_2(1-p)}$ then there exists two boundary equilibrium $E_{v_1}(0,\Bar{v_1})$ and $E_{v_2}(0,\Bar{v_2})$.
 \end{enumerate}
\end{lemma}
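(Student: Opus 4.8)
The plan is to reduce the boundary-equilibrium question to a single-variable root-counting problem and then exploit the unimodal shape of the resulting function. Setting $u=0$ in the second equation of \eqref{Eqn:2} leaves $a_2 q v - b_2 v^2 - (1-q)v^p = 0$, and since a genuine boundary equilibrium $E_v(0,\bar v)$ demands $\bar v>0$, I would divide through by $v$ to obtain the equivalent equation $h(v)=0$, where
\begin{equation}
h(v) = a_2 q - b_2 v - (1-q) v^{p-1}, \qquad v>0.
\end{equation}
Counting positive roots of $h$ is then exactly counting equilibria of the form $E_v(0,\bar v)$.

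Next I would pin down the shape of $h$ on $(0,\infty)$. Because $0<p<1$ we have $p-1<0$, so $v^{p-1}\to+\infty$ as $v\to 0^+$, forcing $h(v)\to-\infty$; and the $-b_2 v$ term forces $h(v)\to-\infty$ as $v\to\infty$. Differentiating,
\begin{equation}
h'(v) = -b_2 + (1-q)(1-p) v^{p-2},
\end{equation}
and since $p-2<0$ the factor $v^{p-2}$ is strictly decreasing from $+\infty$ to $0$, so $h'$ is strictly decreasing and vanishes at exactly one point $v_\phi$ determined by $v_\phi^{p-2}=\frac{b_2}{(1-q)(1-p)}$ --- precisely the quantity named in the statement. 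Hence $h$ is strictly increasing on $(0,v_\phi)$, strictly decreasing on $(v_\phi,\infty)$, and attains its unique global maximum at $v_\phi$.

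With this unimodal profile in hand, the number of positive roots is governed entirely by the sign of $h(v_\phi)$. I would evaluate it using $v_\phi^{p-1}=v_\phi\, v_\phi^{p-2}=\frac{b_2 v_\phi}{(1-q)(1-p)}$, which collapses the awkward power term:
\begin{equation}
h(v_\phi) = a_2 q - b_2 v_\phi - \frac{b_2 v_\phi}{1-p} = a_2 q - \frac{b_2 v_\phi (2-p)}{1-p}.
\end{equation}
Since both one-sided limits of $h$ equal $-\infty$ and $h$ is strictly unimodal, the intermediate value theorem together with strict monotonicity on each branch yields: no positive root when $h(v_\phi)<0$, the single root $v=v_\phi$ when $h(v_\phi)=0$, and exactly two positive roots (one on each monotone branch) when $h(v_\phi)>0$. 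Translating these three sign cases through the displayed formula gives precisely the three threshold conditions on $q$ relative to $\frac{b_2 v_\phi(2-p)}{a_2(1-p)}$, which is the claim.

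I do not anticipate a genuine obstacle here; the argument is elementary one-variable calculus. The only point requiring care is that $v_\phi$ itself depends on $q$ through its defining relation, so the stated inequalities are implicit rather than a clean threshold in a single free parameter --- one must keep $v_\phi=v_\phi(q)$ in mind when interpreting the conclusion, but this does not affect the root-counting argument, since for each fixed admissible $q$ the maximizer $v_\phi$ and the sign of $h(v_\phi)$ are well defined.
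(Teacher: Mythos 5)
Your proposal is correct and follows essentially the same route as the paper's own proof: reduce to the scalar equation $a_2 q - b_2 v - (1-q)v^{p-1}=0$, locate the unique critical point $v_\phi$ of this unimodal function, and count roots by the sign of its maximum value $a_2 q - b_2 v_\phi\frac{2-p}{1-p}$. Your write-up is in fact a bit tighter (explicit limits at $0^+$ and $\infty$, strict monotonicity of $h'$ in place of the paper's second-derivative test, and the correct observation that $v_\phi$ depends on $q$), but the underlying argument is identical.
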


\begin{lemma}
\label{b_1b_2-c_1c_2<0 exists}
 If $b_1b_2-c_1c_2 \leq 0$ there exists only one unique interior coexistence equilibrium.
\end{lemma}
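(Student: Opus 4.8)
The plan is to reduce the two-dimensional equilibrium problem to the scalar root-counting problem already isolated in \eqref{nullcline}. Any interior equilibrium $(u^*,v^*)$ must satisfy the $u$-nullcline with $u^*\neq 0$, which forces $u^* = (a_1-c_1v^*)/b_1$, so admissibility ($u^*>0$) is exactly the requirement $0<v^*<a_1/c_1$. Substituting this expression into the $v$-nullcline and dividing by $v^*>0$ yields precisely \eqref{nullcline}. Thus it suffices to count the positive roots of
\[
g(v) := (a_2b_1q-c_2a_1) + (c_1c_2-b_1b_2)\,v - b_1(1-q)\,v^{p-1},
\]
and then confirm the admissible one lies in $(0,a_1/c_1)$. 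I would stress that $g$ packages all of the structure: the coefficient $c_1c_2-b_1b_2$ is nonnegative \emph{precisely} under the hypothesis $b_1b_2-c_1c_2\le 0$, while $b_1(1-q)>0$, $0<p<1$, and $0<q<1$ are fixed positive data.

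The core of the argument is a single monotonicity estimate. Differentiating gives
\[
g'(v) = (c_1c_2-b_1b_2) + b_1(1-q)(1-p)\,v^{p-2}.
\]
The first summand is $\ge 0$ by hypothesis, and the second is strictly positive for every $v>0$ because $b_1(1-q)(1-p)>0$ and $v^{p-2}>0$. Hence $g'(v)>0$ on $(0,\infty)$, so $g$ is strictly increasing and can vanish at most once. This delivers the uniqueness half of the claim immediately, and it is exactly where the sign condition $b_1b_2-c_1c_2\le 0$ is consumed.

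For existence I would examine the two ends. As $v\to 0^+$ the term $-b_1(1-q)v^{p-1}$ diverges to $-\infty$ (since $p-1<0$), so $g(0^+)=-\infty$; when $c_1c_2-b_1b_2>0$ the linear term dominates the vanishing $v^{p-1}$ at infinity, giving $g(+\infty)=+\infty$, and the Intermediate Value Theorem together with strict monotonicity produces exactly one root $v^*>0$. The hard part will be two bookkeeping checks rather than the monotonicity itself. First, admissibility: I must confirm the unique root obeys $v^*<a_1/c_1$, equivalently $g(a_1/c_1)>0$; evaluating $g$ at $a_1/c_1$ makes the interspecific terms cancel, leaving $b_1\big[a_2q-\tfrac{a_1b_2}{c_1}-(1-q)(a_1/c_1)^{p-1}\big]$, whose sign I would control using that all equilibria lie in the invariant region $\Gamma$ noted above. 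Second, the borderline subcase $b_1b_2=c_1c_2$, where the linear term disappears and $g(v)=(a_2b_1q-c_2a_1)-b_1(1-q)v^{p-1}$ remains strictly increasing but tends to the finite limit $a_2b_1q-c_2a_1$ at infinity; here one must separately record that a (still unique) positive root exists iff this constant is positive. I expect the monotonicity to be routine, and these admissibility and degenerate-case verifications to be where the genuine care is required.
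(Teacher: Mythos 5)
Your proposal follows essentially the same route as the paper: reduce to the scalar nullcline function $\phi(v)=(a_2b_1q-c_2a_1)+(c_1c_2-b_1b_2)v-b_1(1-q)v^{p-1}$, show $\phi'(v)>0$ under the hypothesis $b_1b_2-c_1c_2\le 0$, and combine $\phi(0^+)=-\infty$ with the behavior at infinity and the intermediate value theorem to conclude there is exactly one positive root. You are in fact more careful than the paper on the two points you flag: the paper asserts $\phi(v)\to+\infty$ as $v\to\infty$, which fails in the borderline case $b_1b_2=c_1c_2$ (there the limit is the finite constant $a_2b_1q-c_2a_1$, so a root exists only when that constant is positive), and it never verifies the admissibility condition $v^*<a_1/c_1$ needed for $u^*=(a_1-c_1v^*)/b_1>0$, i.e.\ for the root to actually correspond to an \emph{interior} coexistence equilibrium.
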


 \begin{figure*}
    \begin{subfigure}[b]{.475\linewidth}
        \includegraphics[width=6cm, height=6cm]{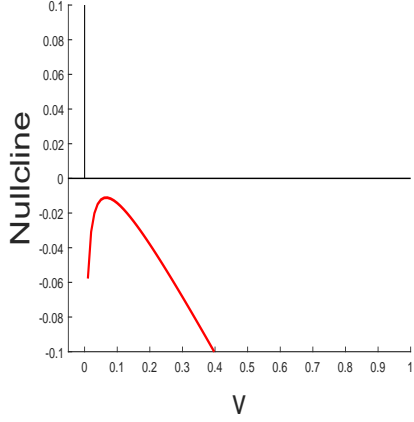}
        \caption{The parameters used are  $a_1=0.4,a_2=0.4,b_1=1,b_2=0.6,c_1=0.3,c_2=0.8,p=0.6,q=0.98$. }
        \label{no_int}
    \end{subfigure}
    \hfill
    \begin{subfigure}[b]{.475\linewidth}
        \includegraphics[width=6cm, height=6cm]{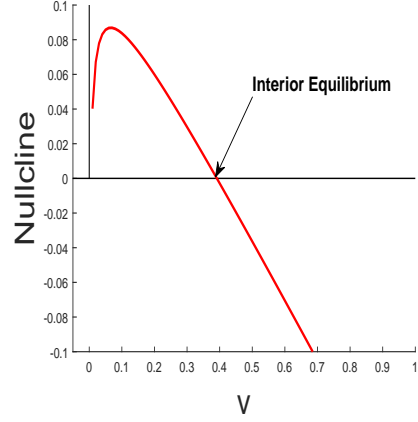}
        \caption{The parameters used are $a_1=0.4,a_2=0.5,b_1=1,b_2=0.6,c_1=0.3,c_2=0.8,p=0.6,q=0.98$.}
        \label{one_int}
    \end{subfigure}

    \begin{subfigure}[b]{.475\linewidth}
        \includegraphics[width=6cm, height=6cm]{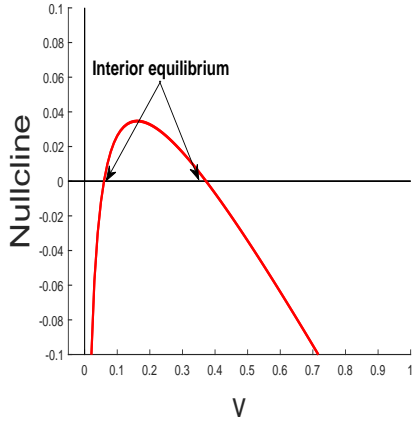}
        \caption{The parameters used are $a_1=0.4,a_2=0.6,b_1=1,b_2=0.6,c_1=0.3,c_2=0.8,p=0.6,q=0.93$. }
        \label{two_int}
    \end{subfigure}
    \hfill
    \begin{subfigure}[b]{.475\linewidth}
        \includegraphics[width=6cm, height=6cm]{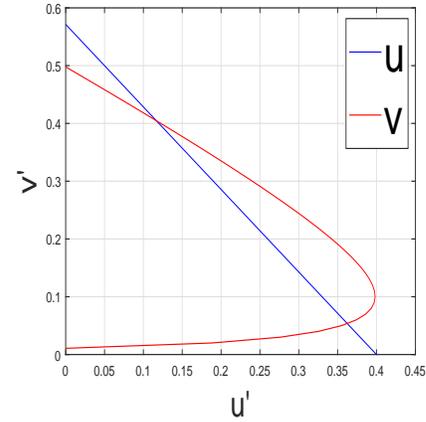}
        \caption{The parameters used are $a_1=0.4,a_2=0.6,b_1=1,b_2=0.6,c_1=0.4,c_2=0.5,p=0.6,q=.9$. }
        \label{complete_nullcline}
    \end{subfigure}
    
    \caption{The figures show the existence of interior equilibria with different parameter conditions as seen in theorem \ref{th:existence} part \ref{int}. Figure \ref{no_int}, \ref{one_int} and \ref{two_int} shows the existence of none, one, and two interior equilibria respectively in system \ref{Eqn:2}. Figure \ref{complete_nullcline} shows the nullclines where the system has three boundary equilibriums and two interior equilibriums. The straight line is the $u$ nullcline and the curved line is the $v$ nullcline.}
    \label{int_exist}
\end{figure*}
 
 \begin{lemma}
 \label{b_1b_2-c_1c_2>0 exists}
 When $b_1b_2-c_1c_2>0$ and $v_{max}^{p-2}=\frac{b_1b_2-c_1c_2}{(1-q)(1-p)}$ then we have:
 \begin{enumerate}
     \item If $(a_2b_1q-c_2a_1)-v_{max}(b_1b_2-c_1c_2)\frac{(2-p)}{(1-p)}<0$ then there does not exist any positive interior equilibrium
     \item If $(a_2b_1q-c_2a_1)-v_{max}(b_1b_2-c_1c_2)\frac{(2-p)}{(1-p)}>0$ then there exists two positive interior equilibria
     \item If $(a_2b_1q-c_2a_1)-v_{max}(b_1b_2-c_1c_2)\frac{(2-p)}{(1-p)}=0$ then there exists one unique interior equilibrium
 \end{enumerate}
 \end{lemma}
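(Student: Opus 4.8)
The plan is to recast the existence of interior equilibria as a root-counting problem for the single scalar function obtained in \eqref{nullcline}. Writing
\[
f(v) := (a_2 b_1 q - c_2 a_1) + (c_1 c_2 - b_1 b_2)\,v - b_1(1-q)\,v^{p-1}, \qquad v > 0,
\]
every positive interior equilibrium of \eqref{Eqn:2} corresponds to a positive zero of $f$, with $u = (a_1 - c_1 v)/b_1$ recovered from the $u$-nullcline. First I would record the boundary behaviour. Since $0 < p < 1$, the term $v^{p-1} = v^{-(1-p)} \to +\infty$ as $v \to 0^+$, so $f(v) \to -\infty$; and since the hypothesis $b_1 b_2 - c_1 c_2 > 0$ makes the linear coefficient $c_1 c_2 - b_1 b_2$ strictly negative while $v^{p-1} \to 0$, we also get $f(v) \to -\infty$ as $v \to \infty$. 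Thus $f$ is negative near both ends of the positive axis, and any zero must arise from $f$ rising to a nonnegative value in between.

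Next I would establish that $f$ is strictly unimodal. Differentiating,
\[
f'(v) = -(b_1 b_2 - c_1 c_2) + b_1(1-q)(1-p)\,v^{p-2}.
\]
Because $p - 2 \in (-2,-1)$, the map $v \mapsto v^{p-2}$ is strictly decreasing from $+\infty$ to $0$, so $f'$ is strictly decreasing, starting from $+\infty$ and tending to $-(b_1 b_2 - c_1 c_2) < 0$. Hence $f'$ has exactly one zero $v_{max}$, characterised by $b_1(1-q)(1-p)\,v_{max}^{p-2} = b_1 b_2 - c_1 c_2$; the function $f$ is strictly increasing on $(0,v_{max})$ and strictly decreasing on $(v_{max},\infty)$, so $v_{max}$ is its unique global maximiser on $(0,\infty)$. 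Here the sign condition $b_1 b_2 - c_1 c_2 > 0$ is essential: it is precisely what forces a single interior maximum, rather than the monotone profile underlying Lemma \ref{b_1b_2-c_1c_2<0 exists}.

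The crux is then to evaluate $f$ at this maximiser and identify it with the stated threshold. From the critical-point identity I would substitute $b_1(1-q)\,v_{max}^{p-1} = \frac{b_1 b_2 - c_1 c_2}{1-p}\, v_{max}$ into $f(v_{max})$, whereupon combining the two terms linear in $v_{max}$ collapses the expression to
\[
f(v_{max}) = (a_2 b_1 q - c_2 a_1) - v_{max}(b_1 b_2 - c_1 c_2)\,\frac{2-p}{1-p}.
\]
Since $f$ attains its global maximum value $f(v_{max})$ and decays to $-\infty$ on either side, the intermediate value theorem delivers the trichotomy: if $f(v_{max}) < 0$ there is no positive zero; if $f(v_{max}) = 0$ the graph is tangent to the axis and $v_{max}$ is the unique (double) zero; and if $f(v_{max}) > 0$ there is exactly one zero in $(0,v_{max})$ and one in $(v_{max},\infty)$, hence two. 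This is exactly the claimed dependence on the sign of $(a_2 b_1 q - c_2 a_1) - v_{max}(b_1 b_2 - c_1 c_2)\frac{2-p}{1-p}$.

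The step I expect to demand the most care is not the root count itself but the feasibility bookkeeping: an honest interior equilibrium also requires $u = (a_1 - c_1 v)/b_1 > 0$, i.e.\ each admissible root must satisfy $v < a_1/c_1$, and one should confirm that the two roots produced in case (2) both lie in the biologically invariant strip $\Gamma$. I would settle this by checking the sign of $f$ at the endpoint $v = a_1/c_1$ (equivalently, that the straight-line and curved nullclines intersect inside the open first quadrant), and tighten the statement if the count must be restricted to feasible crossings. The remaining work — the clean cancellation producing the factor $\frac{2-p}{1-p}$ — is routine once the critical-point identity is in hand.
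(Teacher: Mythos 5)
Your proposal is correct and follows essentially the same route as the paper: both reduce the problem to counting positive roots of $\phi(v)=(a_2b_1q-c_2a_1)+(c_1c_2-b_1b_2)v-b_1(1-q)v^{p-1}$, establish a unique interior maximum at $v_{max}$ (your argument via the strict monotonicity of $f'$ is a slightly cleaner justification than the paper's second-derivative test), substitute the critical-point identity to get $f(v_{max})=(a_2b_1q-c_2a_1)-v_{max}(b_1b_2-c_1c_2)\frac{2-p}{1-p}$, and conclude the trichotomy from the behaviour $f\to-\infty$ at both ends. Your closing remark about feasibility (requiring $u=(a_1-c_1v)/b_1>0$, i.e.\ $v<a_1/c_1$) flags a bookkeeping point that the paper's proof silently omits, and your critical-point identity $v_{max}^{p-2}=\frac{b_1b_2-c_1c_2}{b_1(1-q)(1-p)}$ agrees with the paper's proof rather than the (apparently typo-ed, $b_1$-less) formula in the lemma statement.
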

\begin{proof}

Let $\phi(v)=(a_2b_1q-c_2a_1)+(c_1c_2-b_1b_2)v-b_1(1-q)v^{p-1}$ where $(b_1b_2-c_1c_2)>0$.\\

We try to study the dynamics of the slope of $\phi(v)$ to determine the existence of equilibrium points. \\
We have, $\phi'(v)=(c_1c_2-b_1b_2)-b_1(p-1)(1-q)v^{p-2}$.\\
Now as $(b_1b_2-c_1c_2)>0$ so it $\phi'(v)$ can be of any sign depending on the magnitude of $v$.\\
Let us assume there exists an extrema $v_{max}$ then we have, $\phi'(v_{max})=0$.\\
Solving for the extrema we get, $v_{max}^{p-2}=\frac{b_1b_2-c_1c_2}{b_1(1-q)(1-p)}$. As $b_1b_2-c_1c_2>0$ and $0<p,q,1$ thus $v_{max}$ is positive. \\
To study the nature of the extrema we find the higher derivative which gives: $\phi''(v_{max})=-b_1(1-q)(1-p)(2-p)v_{max}^{p-3}$. As $0<p,q<1$ so $\phi''(v_{max})$ is strictly negative for any value of $p$ and $q$ . Thus at $v_{max}$ we have a maxima. Now as the parameters are fixed so $v_{max}$ is a unique positive root. So, $v_{max}$ is a global maximum.\\
Then the maximum value of the function is \\
$\phi(v_{max})= (a_2b_1q-c_2a_1)+(c_1c_2-b_1b_2)v_{max}-b_1(1-q)v_{max}^{p-1}$\\ 
=$(a_2b_1q-c_2a_1)-v(b_1b_2-c_1c_2)\frac{(2-p)}{(1-p)}$\\
We see $\phi(v) \to  -\infty$ and when  $v\to 0$ and $v\to \infty$. So the curve is downward facing as seen in figure \ref{int_exist}.\\
So if, $\phi(v_{max})<0$ then it is obvious that there does not exist any positive root of $\phi(v)$.
If $\phi(v_{max})=0$ then $v_{max}$ is the only root of $\phi(v)$.
As $v_{max}$ is the only maxima so if $\phi(v_{max})>0$ then we can have two distinct positive roots of $\phi(v)$. Hence the number of equilibria are determined for the polynomial $\phi(v)$ when $b_1b_2-c_1c_2>0$.
\end{proof}

According to lemma \ref{Boundary_v}, \ref{b_1b_2-c_1c_2<0 exists} and \ref{b_1b_2-c_1c_2>0 exists} we  know the existence of equilibria of system (\ref{Eqn:2}) and can be summarized to theorem \ref{th:existence} given by:

\begin{theorem} \hfill\\
\label{th:existence}
\begin{enumerate}
    \item System (\ref{Eqn:2}) has a trivial equilibrium $E_0(0,0)$.
    \item System (\ref{Eqn:2}) has two types of boundary equilibrium {$E_u(a_1/b_1,0)$} and $E_v(0,\Bar{v})$. The number of equilibrum of the form $E_v(0,\Bar{v})$ can be determined by:
    \begin{enumerate}
     \item If $q < \frac{b_2v_\phi(2-p)}{a_2(1-p)}$ and $v_{\phi}^{p-2}=\frac{b_2}{(1-q)(1-p)}$ then there does not exist any boundary equilibrium of the form $E_v(0,\Bar{v})$.
     \item If $q = \frac{b_2v_\phi(2-p)}{a_2(1-p)}$ and $v_{\phi}^{p-2}=\frac{b_2}{(1-q)(1-p)}$ then there exists an unique boundary equilibrium $E_v(0,\Bar{v})$.
     \item If $q > \frac{b_2v_\phi(2-p)}{a_2(1-p)}$ and $v_{\phi}^{p-2}=\frac{b_2}{(1-q)(1-p)}$ then there exists two boundary equilibrium $E_{v_1}(0,\Bar{v_1})$ and $E_{v_2}(0,\Bar{v_2})$.
        
    \end{enumerate}
    \item If  $(b_1b_2-c_1c_2) \leq 0$ then system (\ref{Eqn:2}) has an unique equilibrium.
    \item \label{int}If $(b_1b_2-c_1c_2) > 0$ and $v_{max}^{p-2}=\frac{b_1b_2-c_1c_2}{(1-q)(1-p)}$ then
        \begin{enumerate}
            \item If $(a_2b_1q-c_2a_1)-v_{max}(b_1b_2-c_1c_2)\frac{(2-p)}{(1-p)}<0$ then system (\ref{Eqn:2}) has no equilibrium.
            \item If $(a_2b_1q-c_2a_1)-v_{max}(b_1b_2-c_1c_2)\frac{(2-p)}{(1-p)}=0$ then system (\ref{Eqn:2}) has an unique equilibrium.
            \item If $(a_2b_1q-c_2a_1)-v_{max}(b_1b_2-c_1c_2)\frac{(2-p)}{(1-p)}>0$ then system (\ref{Eqn:2}) has two different equilibria.
        \end{enumerate}
\end{enumerate}
\end{theorem}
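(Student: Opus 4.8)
The plan is to assemble the theorem directly from the three preceding lemmas, treating each part as a repackaging of an already-established fact together with two elementary substitutions. First I would dispose of part (1) and the $E_u$ portion of part (2) by direct evaluation of the right-hand side of \eqref{Eqn:2}. Setting $u=v=0$ annihilates every term (using $0^{p}=0$ for $p>0$), so $E_0=(0,0)$ is an equilibrium. Setting $v=0$ in the first equation leaves $a_1u-b_1u^{2}=u(a_1-b_1u)=0$, whose nontrivial root $u=a_1/b_1$ gives $E_u=(a_1/b_1,0)$; the second equation vanishes there because every surviving term carries a factor of $v$.

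For the count of boundary equilibria $E_v=(0,\bar v)$ in part (2), I would simply invoke Lemma \ref{Boundary_v}. Setting $u=0$ in the second equation reduces the $v$-nullcline to $a_2qv-b_2v^{2}-(1-q)v^{p}=0$, and the position of $q$ relative to the threshold $\frac{b_2v_\phi(2-p)}{a_2(1-p)}$, with $v_\phi^{p-2}=\frac{b_2}{(1-q)(1-p)}$, partitions the parameter space into zero, one, or two positive roots exactly as the lemma asserts. Parts (1) and (2) thus require no new argument beyond the lemma and the two substitutions above.

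For the interior equilibria in parts (3) and (4), the key reduction is the scalar nullcline \eqref{nullcline}: eliminating $u$ through the first nullcline $u=(a_1-c_1v)/b_1$ (valid when $u\neq 0$) collapses the interior-equilibrium problem to counting positive roots of $\phi(v)=(a_2b_1q-c_2a_1)+(c_1c_2-b_1b_2)v-b_1(1-q)v^{p-1}$. Part (3) is then exactly Lemma \ref{b_1b_2-c_1c_2<0 exists}, while the three subcases of part (4) are exactly Lemma \ref{b_1b_2-c_1c_2>0 exists}, whose concavity analysis (a single interior maximum of $\phi$, with $\phi\to-\infty$ as $v\to 0^{+}$ and as $v\to\infty$) pins the root count to the sign of $\phi(v_{max})$.

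I expect the only genuine gap to be bookkeeping rather than mathematics: one must confirm that each positive root $v$ of $\phi$ furnished by the lemmas also yields a \emph{feasible} interior state, i.e.\ that $u=(a_1-c_1v)/b_1>0$, equivalently $v<a_1/c_1$, so the point lies in the invariant region $\Gamma$ and not outside the first quadrant. Stitching the four parts into one disjoint case analysis, and checking that this positivity constraint is inherited from the root-counting of the lemmas, is the main (and essentially clerical) obstacle; once it is verified the theorem follows by direct appeal to Lemmas \ref{Boundary_v}, \ref{b_1b_2-c_1c_2<0 exists}, and \ref{b_1b_2-c_1c_2>0 exists}.
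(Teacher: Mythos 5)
Your proposal matches the paper's own treatment: the paper presents Theorem \ref{th:existence} as nothing more than a compilation of Lemmas \ref{Boundary_v}, \ref{b_1b_2-c_1c_2<0 exists} and \ref{b_1b_2-c_1c_2>0 exists}, together with the trivial verifications of $E_0$ and $E_u$, which is exactly the assembly you describe. Your closing remark --- that each positive root $v$ of $\phi$ must also satisfy $v<a_1/c_1$ so that $u=(a_1-c_1v)/b_1>0$ and the point lies in $\Gamma$ --- is a feasibility check the paper silently omits, so noting it only strengthens the argument.
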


\subsection{Stability of the equilibria.}\hfill\\

In this subsection, we study the dynamics of the system (\ref{Eqn:2}) in the neighborhood of each equilibrium. We study the stability of each equilibrium using the Jacobian matrix $J(u,v)$ of the system (\ref{Eqn:2}) given by,\\

$\begin{bmatrix}
a_1-2b_1u-c_1v & -c_1u \\
-c_2v & a_2q-2b_2v-c_2u-p(1-q)v^{p-1}
\end{bmatrix}$

The trivial equilibrium $E_0(0,0)$ has two eigenvalues where $\lambda_1=a_1$ which is always positive. So, $E_0(0,0)$ is always unstable. Next, we investigate the stability of the boundary and the interior equilibria. The analysis of the boundary equilibrium $E_u(a_{1}/b_1,0)$, is not possible via linear stability analysis, due to the lack of differentiability of system (\ref{Eqn:2}) at $u=0$. Nonetheless, the following result can be provided,

\begin{lemma}
\label{interior_stab}
Consider the boundary equilibrium $E_u(a_{1}/b_1,0)$. This is locally stable. That is there exists certain data that is attracted to this equilibrium in finite time.
\end{lemma}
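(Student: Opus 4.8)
The plan is to bypass the linear stability analysis altogether and instead exploit the finite time extinction mechanism built into the $v$-equation of \eqref{Eqn:2}. The very term that obstructs linearization, namely $-(1-q)v^{p}$ with $0<p<1$, is what I would turn into an advantage. The guiding observation is that as $v\to 0^{+}$ this sublinear term dominates every other term of the $v$-equation, since $v^{p}/v=v^{p-1}\to\infty$; hence near the $u$-axis the $v$-dynamics are driven to extinction by $-(1-q)v^{p}$, while $u$ relaxes toward $a_{1}/b_{1}$.

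First I would localize inside the invariant rectangle $\Gamma$, so that $0\le u\le a_{1}/b_{1}$ and $v\ge 0$ along trajectories, and restrict to initial data $(u_{0},v_{0})$ with $v_{0}$ small. Choosing $v_{*}=\left(\tfrac{1-q}{2a_{2}q}\right)^{1/(1-p)}$ guarantees $a_{2}q\,v\le \tfrac{1-q}{2}v^{p}$ for $0<v\le v_{*}$; then, discarding the nonpositive terms $-b_{2}v^{2}-c_{2}uv$, the $v$-equation obeys the differential inequality
\[
\frac{dv}{dt}\;\le\; a_{2}q\,v-(1-q)v^{p}\;\le\; -\frac{1-q}{2}\,v^{p},\qquad 0<v\le v_{*}.
\]
Comparing $v$ with the solution of $\dot w=-\tfrac{1-q}{2}w^{p}$, $w(0)=v_{0}$, namely
\[
w(t)=\left[\,v_{0}^{\,1-p}-\tfrac{(1-q)(1-p)}{2}\,t\,\right]^{1/(1-p)},
\]
which vanishes at $t^{*}=\tfrac{2\,v_{0}^{\,1-p}}{(1-q)(1-p)}$, the scalar comparison principle gives $0\le v(t)\le w(t)$, so $v(t^{*})=0$ and, importantly, $t^{*}\to 0$ as $v_{0}\to 0$. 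A short separate argument then shows $v(t)\equiv 0$ for $t\ge t^{*}$: since $\dot v<0$ whenever $0<v\le v_{*}$, the nonnegative solution cannot re-enter $(0,v_{*}]$ after it first reaches the axis.

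It remains to track $u$. On $[0,t^{*}]$ the $u$-equation is a bounded perturbation of the logistic equation with $|c_{1}uv|\le c_{1}(a_{1}/b_{1})v_{0}$, so over the short interval of length $t^{*}$ the displacement of $u$ from $a_{1}/b_{1}$ is controlled and tends to $0$ with $v_{0}$; this also keeps the whole orbit in a small neighborhood of $E_{u}$, yielding stability in the Lyapunov sense. For $t\ge t^{*}$ one has $v\equiv 0$ and the system collapses to $\dot u=a_{1}u-b_{1}u^{2}$, every positive orbit of which converges to $a_{1}/b_{1}$. Concatenating the two phases shows all such data are attracted to $E_{u}$, with the $v$-component reaching its equilibrium value in the finite time $t^{*}$.

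The hard part will not be the comparison estimate but the careful treatment of the non-smooth point $v=0$: establishing forward invariance of the axis after extinction, i.e.\ ruling out that the trajectory exploits non-uniqueness of the sublinear field to leave $\{v=0\}$, and making precise the intended sense of ``local stability'' given the mismatch that the $v$-component is extinguished in finite time while the $u$-component relaxes only asymptotically.
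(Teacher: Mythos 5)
Your proposal is correct, and it is essentially the approach the paper intends: the paper's own ``proof'' is only a citation to the methods of \cite{KRM20, P19}, which are exactly the comparison-based finite-time-extinction arguments you carry out (dominating the $v$-equation near the axis by $\dot w = -\frac{1-q}{2}w^{p}$, integrating the sublinear ODE explicitly to extinguish $v$ in time $t^{*}\to 0$ as $v_{0}\to 0$, verifying the axis is forward invariant since the field points downward for small $v>0$, and then letting $u$ relax logistically to $a_{1}/b_{1}$). Your closing caveat about the mismatch between finite-time extinction of $v$ and only asymptotic convergence of $u$ is a looseness in the lemma's statement itself, not a gap in your argument.
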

\begin{proof}
The proof follows via methods in \cite{KRM20, P19}.
\end{proof}

\begin{lemma}
\label{stab_interior}
Let $v_{\phi}^{p-2}=\frac{b_2}{(1-q)(1-p)}$ then
\begin{enumerate}
        \item If $q = \frac{b_2v_\phi(2-p)}{a_2(1-p)}$ then there exists an unique boundary equilibrium $E_v(0,v_\phi)$ and it is a saddle node.
     \item If $q > \frac{b_2v_\phi(2-p)}{a_2(1-p)}$ then there exists two boundary equilibrium $E_{v_1}(0,v_1)$ and $E_{v_2}(0,v_2)$ and where $v_1<v_2$ and $E_{v_1}$ is a saddle point while $E_{v_2}$ is a stable node if $v_2>\frac{a_1}{c_1}$
\end{enumerate}
\end{lemma}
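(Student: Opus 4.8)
The plan is to read off the two eigenvalues of the Jacobian at a boundary equilibrium, rewrite the second one as the slope of the reduced nullcline, and then let the geometry of that nullcline (already pinned down in Lemma \ref{Boundary_v}) decide the signs. First I would evaluate the Jacobian $J(u,v)$ from the previous subsection at a point $(0,\bar v)$. Since the off-diagonal entry $-c_1u$ vanishes there, $J(0,\bar v)$ is lower triangular, so its eigenvalues are exactly the diagonal entries $\lambda_1=a_1-c_1\bar v$ and $\lambda_2=a_2q-2b_2\bar v-p(1-q)\bar v^{p-1}$. Thus, as long as both are nonzero, the stability type is decided by the two scalar signs alone, and no genuinely two-dimensional computation is needed.

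Next I would exploit the defining equation of $\bar v$. Writing the $u=0$ slice of the $v$-nullcline as $g(v):=a_2q-b_2v-(1-q)v^{p-1}$, the boundary equilibrium satisfies $g(\bar v)=0$, i.e. $a_2q=b_2\bar v+(1-q)\bar v^{p-1}$. Substituting this into $\lambda_2$ collapses it to $\lambda_2=-b_2\bar v+(1-q)(1-p)\bar v^{p-1}=\bar v\,g'(\bar v)$, where $g'(v)=-b_2+(1-q)(1-p)v^{p-2}$. Hence, since $\bar v>0$, the sign of $\lambda_2$ is exactly the sign of the nullcline slope $g'(\bar v)$. A short check shows $g''<0$, so $g$ is strictly concave with a unique critical point $v_\phi$ given by $v_\phi^{p-2}=b_2/\big((1-q)(1-p)\big)$, the very quantity named in the statement; moreover the threshold value $q=b_2v_\phi(2-p)/\big(a_2(1-p)\big)$ is equivalent to $g(v_\phi)=0$, i.e. to $v_\phi$ being a double root.

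For part (2), with $q$ above threshold, concavity of $g$ together with $g(v)\to-\infty$ as $v\to0^+$ and as $v\to\infty$ forces the two roots to straddle the maximum, $v_1<v_\phi<v_2$. Then $g'(v_1)>0$ gives $\lambda_2>0$ at $E_{v_1}$, while $g'(v_2)<0$ gives $\lambda_2<0$ at $E_{v_2}$. Combining with $\lambda_1=a_1-c_1\bar v$: at $E_{v_2}$ the hypothesis $v_2>a_1/c_1$ yields $\lambda_1<0$, so both eigenvalues are negative and $E_{v_2}$ is a stable node; at $E_{v_1}$ the positive eigenvalue $\lambda_2>0$ makes it a saddle precisely when $\lambda_1<0$, i.e. when $v_1>a_1/c_1$, and I would record this sign requirement explicitly (it is the only place where the saddle claim depends on the position of $v_1$ relative to $a_1/c_1$).

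The genuinely delicate case is part (1), the threshold, because there $v_\phi$ is a double root and $\lambda_2=\bar v\,g'(v_\phi)=0$, so the equilibrium is non-hyperbolic and linearization cannot settle it; this is the main obstacle. I would resolve it by a center manifold reduction. Assuming $\lambda_1\neq0$, the kernel eigenvector of $J(0,v_\phi)$ is $(0,1)$, so the center manifold is a graph $u=h(v)$ with $h(v_\phi)=h'(v_\phi)=0$ and the $u$-direction is hyperbolic. The reduced one-dimensional flow is $\dot v=v\big(g(v)-c_2h(v)\big)$; since $g(v_\phi)=g'(v_\phi)=0$ and $h=O\big((v-v_\phi)^2\big)$, its Taylor expansion at $v_\phi$ begins with the quadratic term $\tfrac12 v_\phi g''(v_\phi)(v-v_\phi)^2$ together with the $O\big((v-v_\phi)^2\big)$ contribution of $h$. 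The expected difficulty is checking that this net quadratic coefficient is nonzero (the fold nondegeneracy condition); once that holds, the reduced field has the normal form $\dot\xi\sim-C\xi^2$ of a saddle node, which combined with the transverse hyperbolic $u$-direction identifies $E_v(0,v_\phi)$ as a saddle node, consistent with its being the collision point of $E_{v_1}$ and $E_{v_2}$ from Lemma \ref{Boundary_v}.
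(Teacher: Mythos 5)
Your proposal is correct, and for part (2) it is essentially the paper's own argument: evaluate the Jacobian at $(0,\bar v)$, note the vanishing off-diagonal entry makes it triangular, use the equilibrium relation $a_2q=b_2\bar v+(1-q)\bar v^{p-1}$ to collapse the second diagonal entry to $\lambda_2=\bar v\bigl(-b_2+(1-p)(1-q)\bar v^{p-2}\bigr)=\bar v\,g'(\bar v)$, and let the position of $v_1,v_2$ relative to the maximum $v_\phi$ fix its sign. Two things genuinely distinguish your write-up, both to your credit. First, for the saddle claim at $E_{v_1}$ the paper argues via the determinant, but its chain of inequalities (ending in ``$<\cdots=0$'') is not valid as written: since $\lambda_2>0$ at $v_1$, one has $\det J(E_{v_1})=\lambda_1\lambda_2$ with sign equal to that of $\lambda_1=a_1-c_1v_1$, so the saddle conclusion does require $v_1>a_1/c_1$ (otherwise $E_{v_1}$ is an unstable node); you correctly flag and record this hypothesis, which the lemma statement and the paper's proof omit. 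Second, the paper's appendix proof never addresses part (1) at all, whereas your center-manifold reduction supplies it, and the nondegeneracy you were worried about does hold: because $\lambda_1=a_1-c_1v_\phi\neq 0$, matching quadratic terms in the invariance equation $h'(v)\dot v=\dot u$ forces the quadratic coefficient of $h$ to vanish, so $h(v)=O\bigl((v-v_\phi)^3\bigr)$ and the reduced flow is $\dot v=\tfrac12 v_\phi\,g''(v_\phi)(v-v_\phi)^2+O\bigl((v-v_\phi)^3\bigr)$ with $g''(v_\phi)<0$, which is exactly the saddle-node normal form. (One hygiene point worth stating: center-manifold theory applies because $v^p$ is smooth near $v_\phi>0$, even though the vector field is non-smooth at $v=0$.) In short, your proof is not just faithful to the paper's linearization strategy but repairs its $E_{v_1}$ step and fills the missing case (1).
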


\begin{lemma}
\label{unstab_interior}
If $(b_1b_2-c_1c_2)\leq 0$ then the unique interior point is always an unstable point. 
\end{lemma}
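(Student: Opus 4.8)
The plan is to carry out a standard linear stability analysis at the unique interior equilibrium $(u^{*},v^{*})$ guaranteed by Lemma \ref{b_1b_2-c_1c_2<0 exists}, using the Jacobian $J(u,v)$ displayed above. The decisive simplification comes from the interior nullcline equations: since $u^{*},v^{*}>0$, we may divide the two equations of \eqref{Eqn:2} by $u^{*}$ and $v^{*}$ respectively to obtain $a_{1}-b_{1}u^{*}-c_{1}v^{*}=0$ and $a_{2}q-b_{2}v^{*}-c_{2}u^{*}-(1-q)(v^{*})^{p-1}=0$. Substituting these identities into the diagonal entries of $J$ collapses them nicely: $J_{11}=a_{1}-2b_{1}u^{*}-c_{1}v^{*}=-b_{1}u^{*}$, and using the second identity to eliminate $a_{2}q-c_{2}u^{*}$ gives $J_{22}=-b_{2}v^{*}+(1-p)(1-q)(v^{*})^{p-1}$, while the off-diagonal entries remain $J_{12}=-c_{1}u^{*}$ and $J_{21}=-c_{2}v^{*}$.

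The next step is to read off the sign of the determinant of the reduced matrix. A direct expansion yields
\begin{equation}
\det J(u^{*},v^{*}) = u^{*}v^{*}\,(b_{1}b_{2}-c_{1}c_{2}) - b_{1}(1-p)(1-q)\,u^{*}(v^{*})^{p-1}.
\end{equation}
I would then invoke the hypothesis $b_{1}b_{2}-c_{1}c_{2}\le 0$ to conclude that the first term is nonpositive, while the second term is subtracted and is \emph{strictly} positive: indeed $b_{1},u^{*},v^{*}>0$, and $0<p<1$, $0<q<1$ force $(1-p)>0$, $(1-q)>0$, and $(v^{*})^{p-1}>0$. Hence $\det J(u^{*},v^{*})<0$ for every parameter choice in this regime.

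Finally, for a $2\times 2$ matrix a negative determinant forces two real eigenvalues of opposite sign, so $(u^{*},v^{*})$ is a saddle and therefore unstable, which is exactly the claim. I do not expect a serious obstacle here, as the argument is essentially algebraic; the only points requiring care are ensuring that the interior equilibrium really is strictly positive (so that the division by $u^{*},v^{*}$ and the evaluation of $(v^{*})^{p-1}$ are legitimate), which is supplied by Lemma \ref{b_1b_2-c_1c_2<0 exists}, and confirming that the sub-linear harvesting contribution $p(1-q)v^{p-1}$ in $J_{22}$ is differentiable at $v^{*}>0$, which it is since $v^{*}$ is bounded away from the non-smooth point $v=0$. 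The conceptual content is that the FTEM term, entering $J_{22}$ with the positive coefficient $(1-p)(1-q)(v^{*})^{p-1}$, only pushes the determinant further negative and so cannot rescue stability once $b_{1}b_{2}-c_{1}c_{2}\le 0$.
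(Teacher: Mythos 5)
Your proposal is correct and follows essentially the same route as the paper: both substitute the interior nullcline identities into the Jacobian to reduce the diagonal entries to $-b_{1}u^{*}$ and $-b_{2}v^{*}+(1-p)(1-q)(v^{*})^{p-1}$, compute the determinant as $u^{*}v^{*}(b_{1}b_{2}-c_{1}c_{2})-b_{1}(1-p)(1-q)u^{*}(v^{*})^{p-1}$, and conclude it is strictly negative under $b_{1}b_{2}-c_{1}c_{2}\le 0$, giving a saddle. Your write-up is in fact slightly cleaner than the paper's (you retain the $u^{*}$ factor the paper drops and state explicitly that a negative determinant forces real eigenvalues of opposite sign), but the argument is the same.
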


\begin{lemma}
\label{stable_equi}
If $b_1b_2-c_1c_2>0$ then we two positive interior point where $E(u^*,v^*)$ is a stable if 
$q<\frac{1}{a_2b_1(1-p)}\min(a_1+a_1c_2(1-p)+((b_2-c_1)+(b_1b_2-c_1c_2)(1-p))v^*,a_1c_2(1-p)+(b_1b_2-c_1c_2)\frac{2-p}{b_1}v^*)$ \\
and $q>\frac{1}{a_2b_1(1-p)}((b_1b_2-c_1c_2)(2-p)v_{max}+c_2a_1(1-p))$ where $v_{max}^{p-2}=\frac{b_1b_2-c_1c_2}{(1-q)(1-p)}$ else it is a saddle point. 
\end{lemma}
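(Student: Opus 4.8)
\emph{Proof proposal.}
The plan is to linearize system \eqref{Eqn:2} at an interior equilibrium $(u^*,v^*)$ and read off stability from the trace and determinant of the Jacobian $J(u^*,v^*)$ displayed just before the lemma, since the two stated inequalities are exactly the sign conditions $\mathrm{tr}(J)<0$ and $\det(J)>0$ in disguise. The essential first step is to use the two equilibrium (nullcline) relations to eliminate the constants and the singular power $v^{p-1}$. From the $u$-nullcline $a_1 - b_1 u^* - c_1 v^* = 0$ I get $b_1 u^* = a_1 - c_1 v^*$, and from the $v$-nullcline (after dividing by $v^* > 0$) $a_2 q - b_2 v^* - c_2 u^* - (1-q)(v^*)^{p-1} = 0$, which gives the key substitution $(1-q)(v^*)^{p-1} = a_2 q - \tfrac{c_2 a_1}{b_1} - \tfrac{b_1 b_2 - c_1 c_2}{b_1} v^*$.

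Substituting these into the Jacobian collapses its entries to $J_{11} = -b_1 u^*$, $J_{12} = -c_1 u^*$, $J_{21} = -c_2 v^*$, and $J_{22} = -b_2 v^* + (1-p)(1-q)(v^*)^{p-1}$. Hence $\mathrm{tr}(J) = -b_1 u^* - b_2 v^* + (1-p)(1-q)(v^*)^{p-1}$ and $\det(J) = u^*\big[(b_1 b_2 - c_1 c_2)v^* - b_1(1-p)(1-q)(v^*)^{p-1}\big]$. Since a planar equilibrium is a stable node/sink precisely when $\mathrm{tr}(J) < 0$ and $\det(J) > 0$, the proof reduces to converting these two sign conditions into inequalities in $q$. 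Feeding the substitution for $(1-q)(v^*)^{p-1}$ into $\mathrm{tr}(J)<0$ and collecting terms yields the first argument of the $\min$; performing the same substitution in $\det(J)>0$ (and cancelling the positive factor $u^*$) yields the second argument; taking the smaller threshold gives the stated bound $q < \tfrac{1}{a_2 b_1 (1-p)}\min(\cdots)$.

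It remains to justify the lower bound on $q$ and the saddle/sink dichotomy. Here I would invoke Lemma \ref{b_1b_2-c_1c_2>0 exists}: recalling $v_{max}^{p-2} = \tfrac{b_1 b_2 - c_1 c_2}{b_1(1-q)(1-p)}$, a direct comparison of the determinant expression shows that $\det(J) > 0$ is \emph{equivalent} to $v^* > v_{max}$. Thus, among the two interior roots of $\phi$ produced by Lemma \ref{b_1b_2-c_1c_2>0 exists}, the one lying to the right of the maximizer $v_{max}$ has positive determinant and is the candidate sink, while the one to the left has $\det(J) < 0$ and is therefore a saddle. Evaluating the determinant condition $\det(J)>0$ at the threshold $v^* = v_{max}$ produces exactly $q > \tfrac{1}{a_2 b_1(1-p)}\big((b_1 b_2 - c_1 c_2)(2-p)v_{max} + c_2 a_1(1-p)\big)$, the lower bound that selects the larger equilibrium; combined with the trace condition this pins down the stable node.

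I expect the main obstacle to be organizational rather than deep. Because the equilibrium relation is transcendental, $v^*$ cannot be made explicit, so every threshold must be expressed \emph{through} the implicit relation, and care is needed to keep the $v^{p-1}$ term and the various $b_1$ factors straight while reconciling the two equivalent forms of the determinant condition (the inequality in $q$ involving $v^*$, and the geometric statement $v^* > v_{max}$). A secondary point worth a sentence is that $\mathrm{tr}(J)<0$ and $\det(J)>0$ guarantee only $\mathrm{Re}(\lambda_{1,2})<0$; since the lemma asserts only stability, the node-versus-focus distinction (the sign of $\mathrm{tr}^2 - 4\det$) need not be resolved.
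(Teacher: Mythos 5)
Your proposal follows essentially the same route as the paper's own proof: linearize at $(u^*,v^*)$, use the equilibrium relations to collapse the Jacobian entries to $J_{11}=-b_1u^*$, $J_{22}=-b_2v^*+(1-p)(1-q)(v^*)^{p-1}$, impose $\mathrm{tr}(J)<0$ and $\det(J)>0$, and convert both sign conditions into upper bounds on $q$ via the nullcline substitution $b_1(1-q)(v^*)^{p-1}=(a_2b_1q-c_2a_1)+(c_1c_2-b_1b_2)v^*$, with the lower bound on $q$ coming from the two-equilibria existence condition $\phi(v_{max})>0$ of Lemma \ref{b_1b_2-c_1c_2>0 exists} (the paper takes it directly from that existence condition; your ``determinant threshold at $v^*=v_{max}$'' is numerically the identical quantity). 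Your added observation that $\det(J)>0$ is equivalent to $v^*>v_{max}$, so that the root to the right of the maximizer is the candidate sink while the root to the left is necessarily a saddle, is left implicit in the paper but is fully consistent with its argument and makes the saddle/sink dichotomy in the lemma explicit.
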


According to Lemma \ref{interior_stab}, \ref{unstab_interior} and \ref{stable_equi} we have an understanding of the stability of the equilibria of the system (\ref{Eqn:2}). The results can be summarized into a theorem given by:\\

\begin{theorem}
\label{th:stability}
The study of stability of the system (\ref{Eqn:2}) shows\\
\begin{enumerate}
    \item The trivial equilibrium $E_0(0,0)$ is always unstable.
    \item The boundary equilibrium $E_u(\frac{a_1}{b_1},0)$ is stable when 
    \item The boundary equilibrium $E_v(0,\Bar{v})$ is stable when let $v_{\phi}^{p-2}=\frac{b_2}{(1-q)(1-p)}$ then
    \begin{itemize}
        \item If $q = \frac{b_2v_\phi(2-p)}{a_2(1-p)}$ then there exists an unique boundary equilibrium $E_v(0,v_\phi)$ and it is a saddle node.
     \item If $q > \frac{b_2v_\phi(2-p)}{a_2(1-p)}$ then there exists two boundary equilibrium $E_{v_1}(0,v_1)$ and $E_{v_2}(0,v_2)$ where $v_1<v_2$ and $E_{v_1}$ is a saddle point while $E_{v_2}$ is a stable node if $v_2>\frac{a_1}{c_1}$
    \end{itemize}
    \item \label{no_int_stab} When $b_1b_2-c_1c_2\leq 0$ then the unique interior point is always an unstable point. 
    \item \label{one_int_stab} When $b_1b_2-c_1c_2>0$ and $v_{max}^{p-2}=\frac{b_1b_2-c_1c_2}{(1-q)(1-p)}$ then,
    \begin{itemize}
        \item If $(a_2b_1q-c_2a_1)-v_{max}(b_1b_2-c_1c_2)\frac{(2-p)}{(1-p)}=0$ then the equilibrium point is undergoes Saddle Node Bifurcation
        \item If $(a_2b_1q-c_2a_1)-v_{max}(b_1b_2-c_1c_2)\frac{(2-p)}{(1-p)}>0$ then the equilibrium point is stable if\\
        $\frac{1}{a_2b_1(1-p)}((b_1b_2-c_1c_2)(2-p)v_{max}+c_2a_1(1-p))<q<\frac{1}{a_2b_1(1-p)}\min(a_1+a_1c_2(1-p)+((b_2-c_1)+(b_1b_2-c_1c_2)(1-p))v^*,a_1c_2(1-p)+(b_1b_2-c_1c_2)\frac{2-p}{b_1}v^*)$
    \end{itemize}
\end{enumerate}
\end{theorem}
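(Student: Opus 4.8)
The plan is to assemble the statement directly from the stability lemmas already in hand: Lemma \ref{interior_stab} for $E_u$, Lemma \ref{stab_interior} for the $E_v$-family, Lemma \ref{unstab_interior} for the degenerate coexistence case, and Lemma \ref{stable_equi} for the nondegenerate coexistence case, while the origin is dispatched by the single positive eigenvalue $\lambda_1=a_1$ already recorded. So the genuine content is the linearized analysis underlying those lemmas, and I would organize it around the displayed Jacobian $J(u,v)$, evaluated at each equilibrium and simplified using the defining nullcline relations.

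For the boundary equilibrium $E_v(0,\bar v)$ I would first observe that setting $u=0$ makes $J$ lower triangular, so its eigenvalues are the diagonal entries $\lambda_1=a_1-c_1\bar v$ and $\lambda_2=a_2 q-2b_2\bar v-p(1-q)\bar v^{\,p-1}$. The first is negative exactly when $\bar v>a_1/c_1$, which is the origin of the condition $v_2>a_1/c_1$. For the second I would substitute the boundary identity $a_2 q=b_2\bar v+(1-q)\bar v^{\,p-1}$ (obtained by dividing the $v$-nullcline by $v$) to get $\lambda_2=-b_2\bar v+(1-p)(1-q)\bar v^{\,p-1}=\bar v\,g'(\bar v)$, where $g(v)=a_2 q-b_2 v-(1-q)v^{p-1}$. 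Thus the sign of $\lambda_2$ tracks the slope of $g$ at the root: the smaller root lies on the increasing branch ($\lambda_2>0$, saddle), the larger on the decreasing branch ($\lambda_2<0$, node), and the coincident root ($g'=0$) gives $\lambda_2=0$ and the saddle-node degeneracy, reproducing Lemma \ref{stab_interior}.

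The heart of the interior analysis is the trace–determinant test. Using the interior nullclines $a_1=b_1 u^*+c_1 v^*$ and $a_2 q-c_2 u^*=b_2 v^*+(1-q)(v^*)^{p-1}$, the diagonal entries collapse to $J_{11}=-b_1 u^*$ and $J_{22}=-b_2 v^*+(1-p)(1-q)(v^*)^{p-1}$, and a short computation yields the key identity
\begin{equation}
\det J(u^*,v^*)=-\,u^* v^*\,\phi'(v^*),
\end{equation}
so the sign of the determinant is opposite to the slope of the nullcline polynomial $\phi$ of Lemma \ref{b_1b_2-c_1c_2>0 exists}. When $b_1b_2-c_1c_2\le 0$ the function $\phi$ is strictly increasing, forcing $\phi'(v^*)>0$ and $\det J<0$ at the unique root, i.e. a saddle, which is Lemma \ref{unstab_interior}. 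When $b_1b_2-c_1c_2>0$ and $\phi(v_{max})>0$ the smaller root sits on the increasing branch ($\det J<0$, saddle) and the larger on the decreasing branch ($\det J>0$), while $\phi(v_{max})=0$ is the saddle-node bifurcation. For the node with $\det J>0$ I would then impose $\operatorname{tr}J<0$, i.e. $b_1 u^*+b_2 v^*>(1-p)(1-q)(v^*)^{p-1}$, and substitute $u^*=(a_1-c_1 v^*)/b_1$ to convert it into the explicit two-sided window on $q$ recorded in Lemma \ref{stable_equi}.

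I expect two steps to be the real obstacles. First, $E_u(a_1/b_1,0)$ cannot be treated by linearization at all: the term $v^{p-1}$ with $0<p<1$ is singular as $v\to 0^+$, so $J$ is undefined there, which is precisely why Lemma \ref{interior_stab} must invoke the finite-time-extinction comparison arguments of \cite{KRM20, P19} rather than eigenvalues, and I would import that conclusion wholesale. Second, reducing the trace inequality to the clean bounds on $q$ in Lemma \ref{stable_equi} is algebraically delicate, because $v^*$ itself depends on $q$ implicitly through $\phi$; I would handle this by bounding $v^*$ between $v_{max}$ and the larger boundary value and exploiting the monotonicity of $\phi$ on the decreasing branch, rather than attempting to solve for $v^*$ in closed form.
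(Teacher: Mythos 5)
Your proposal is correct and takes essentially the same route as the paper: the theorem is assembled from Lemmas \ref{interior_stab}, \ref{stab_interior}, \ref{unstab_interior} and \ref{stable_equi}, with the $E_v(0,\bar v)$ case handled by the lower-triangular Jacobian at $u=0$ plus the nullcline substitution, the interior case by the trace--determinant test converted into bounds on $q$ via the relation $\phi(v^*)=0$, and $E_u(a_1/b_1,0)$ imported from the comparison/FTE arguments of the cited references exactly as the paper does. Your identity $\det J(u^*,v^*)=-u^*v^*\phi'(v^*)$ is merely a cleaner packaging of the determinant expression $(b_1b_2-c_1c_2)v^*-b_1(1-p)(1-q)(v^*)^{p-1}$ that the paper computes directly in its appendix proofs, so the mathematical content is identical.
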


According to lemma \ref{stable_equi}, we see that when $E_{max}(u_{max},v_{max})$ is the equilibrium and we cross from one side of a plane to another by change of parameter $q$ the system (\ref{Eqn:2}) changes from no equilibrium to two equilibria. So, there can exist a saddle-node bifurcation at $E_{max}(u_{max},v_{max})$ which we study in the next section.

 \begin{figure*}
    \begin{subfigure}[b]{.475\linewidth}
        \includegraphics[width=6cm, height=6cm]{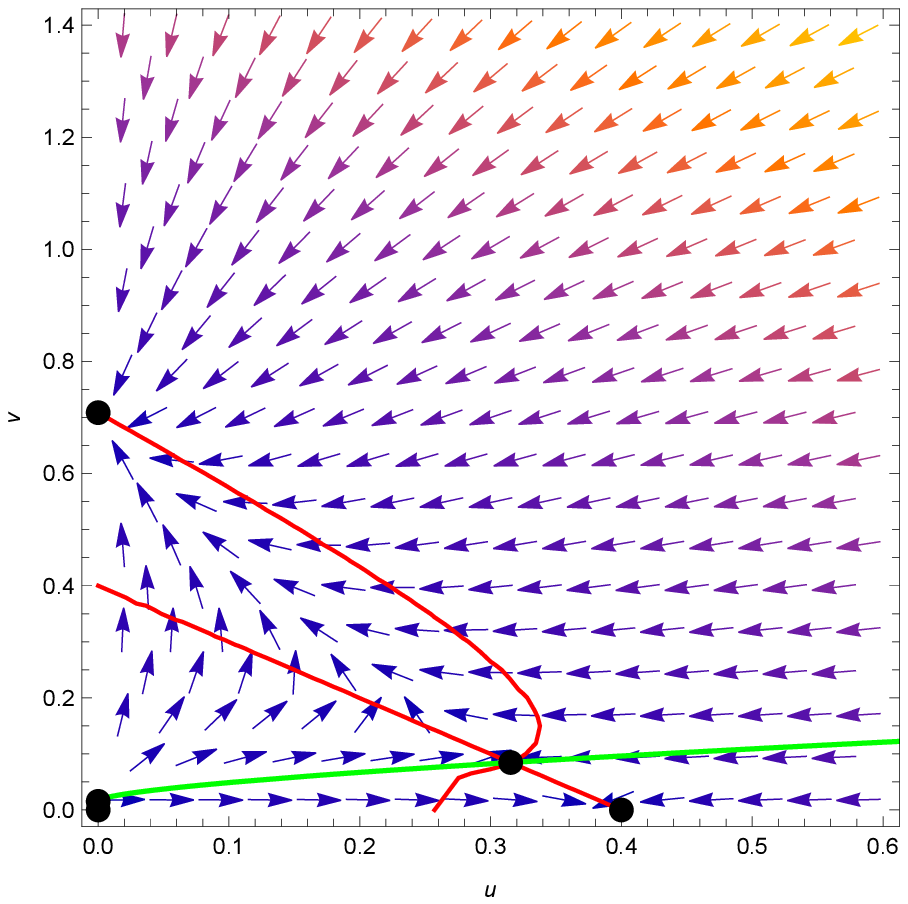}
        \caption{The parameters used are  $a_1=0.5,a_2=0.6,b_1=0.4,b_2=0.6,c_1=0.4,c_2=0.5,p=0.6,q=0.96$. }
        \label{no_int_stable}
    \end{subfigure}
    \hfill
    \begin{subfigure}[b]{.475\linewidth}
        \includegraphics[width=6cm, height=6cm]{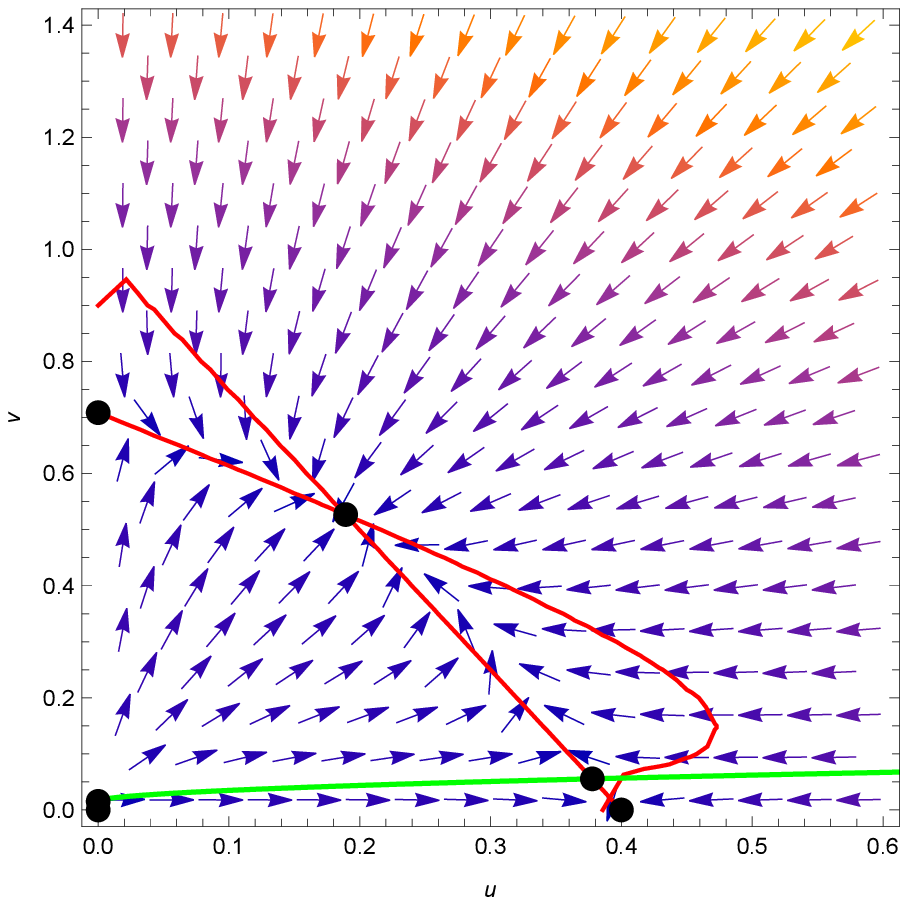}
        \caption{The parameters used are $a_1=0.4,a_2=0.6,b_1=1,b_2=0.6,c_1=0.4,c_2=0.5,p=0.6,q=0.9$.}
        \label{one_int_stable}
    \end{subfigure}
    \caption{The figures show the stability of interior equilibria with different parameter conditions as seen in theorem \ref{th:stability}.The red lines are the nullclines of the system. The green line is the stable manifold passing through saddle equilibrium. The dots are the equilibria for the system. Figure \ref{no_int_stable} shows the instability of the interior equilibrium as seen in  theorem \ref{th:stability}(\ref{no_int_stab}).    Figure \ref{one_int_stable} shows the dynamics of one interior equilibrium stable and one as saddle under the conditions as seen in \ref{th:stability}(\ref{one_int_stab}).}
    \label{int_stable}
\end{figure*}

\subsection{Saddle-node Bifurcation} \hfill\\

When $(b_1b_2-c_1c_2)>0$ then by lemma \ref{b_1b_2-c_1c_2>0 exists} there can be two interior equilibrium points from no equilibrium point when the nullclines crosses $E_{max}(u_{max},v_{max})$ equilibrium point due to change in parameter q.\\
The Jacobian matrix for such a equilibrium point $E_{max}(u_{max},v_{max})$ is
J($E_{max}$)=
$\begin{bmatrix}
-b_1u_{max} & -c_1u_{max}\\ -c_2v_{max} & -b_2v_{max}+(1-p)(1-q)v_{max}^{(p-1)}
\end{bmatrix}$ \\
So, det$J(E_{max})=u_{max}v_{max}((b_1b_2-c_1c_2)-b_1(1-p)(1-q)v_{max}^{(p-2))}$

As $v_{max}$ and $u_{max}$ are not zero then,

det$J(E_{max})=0$ if $v_{max}^{(p-2)}=\frac{b_1b_2-c_1c_2}{b_1(1-p)(1-q}$ 

It is obvious that $\lambda_1=0$ and $\lambda_2=-b_1u_{max}-v_{max}(b_2-(b_1b_2-c_1c_2)/b_1)$ are the eigenvalues of the $J(E_{max})$. So if $-a_1+v_{max}(c_1-b_2+(b_1b_2-c_1c_2)/b_1) \neq 0$ then $\lambda_2 \neq 0.$

So from the conditions, we can obtain that

$SN_1$=$\{(a_1,b_1,b_2,c_1,c_2,p,q) : (b_1b_2-c_1c_2)>0,-a_1+v_{max}(c_1-b_2+(b_1b_2-c_1c_2)/b_1) \neq 0 ,a_1>0,b_1>0,b_2>0,c_1>0,c_2>0,0<p,q<1 \}$ \\
is a saddle-node bifurcation surface. Sotomayor's theorem \cite{perko2013differential} is used to verify the transversality conditions for the occurrence of saddle-node bifurcation of the parameter on the surface $SN_1$. We know that $J(E_{max})$ has one simple zero eigenvalue. If V and W represent the eigenvectors for the zero eigenvalue of the matrix $J(E_{max})$ and $J(E_{max})^T$, respectively, then V and W are,

V=
$\begin{bmatrix}
V_1\\V_2
\end{bmatrix}$= $\begin{bmatrix}
c_1\\-b_1
\end{bmatrix}$,

W=$\begin{bmatrix}
W_1\\W_2
\end{bmatrix}$= $\begin{bmatrix}
c_2v_{max}\\-b_1u_{max}
\end{bmatrix}$.

Furthermore, we have 

$F_q(E_{max}; SN_1)$= $\begin{bmatrix}
0\\a_2v_{max}+v_{max}^{p}
\end{bmatrix}$.

$D^2F(E_{max}; SN_1)(V,V)$=
$\begin{bmatrix}
\frac{\partial^2 F_1}{\partial u^2}  V_1^2 & 2\frac{\partial^2 F_1}{\partial u \partial v}V_1V_2 & \frac{\partial^2F_1}{\partial v^2}V_2^2\\

\frac{\partial^2F_2}{\partial u^2}V_1^2 & 2\frac{\partial^2F_2}{\partial u \partial v}V_1V_2 & \frac{\partial^2F_2}{\partial v^2}V_2^2= 
\end{bmatrix}$ \\
=$\begin{bmatrix}
0\\ \frac{2b_1}{c_1^2}(b_1b_2-c_1c_2)(p-2)
\end{bmatrix}$.

Obviously, V and W satisfy the transversality conditions\\
$W^TF_q(E_{max}; SN_1)=\frac{-b_1u_{max}}{c_1}(a_2+p) \neq 0$,\\
$W^TD^2F(E_{max}; SN_1)(V,V)=\frac{-2b_1^2u_{max}}{c_1^2c_2v_{max}}(b_1b_2-c_1c_2)(p-2) \neq 0$, as $b_1b_2-c_1c_2>0$ and $0<p<1$.\\

for the occurrence of the saddle-node bifurcation at the parameters on the surface $SN_1$. Hence, it can be stated that when the parameters cross from one side of the surface to the other side the number of equilibria of the model changes from zero to two, and the two equilibria which are interior equilibria are hyperbolic saddle and node. The saddle-node bifurcation existence can be numerically seen for a particular parameter set as in figure \ref{saddle_node}.

 \begin{figure*}
    \begin{subfigure}[b]{.475\linewidth}
        \includegraphics[width=6cm, height=6cm]{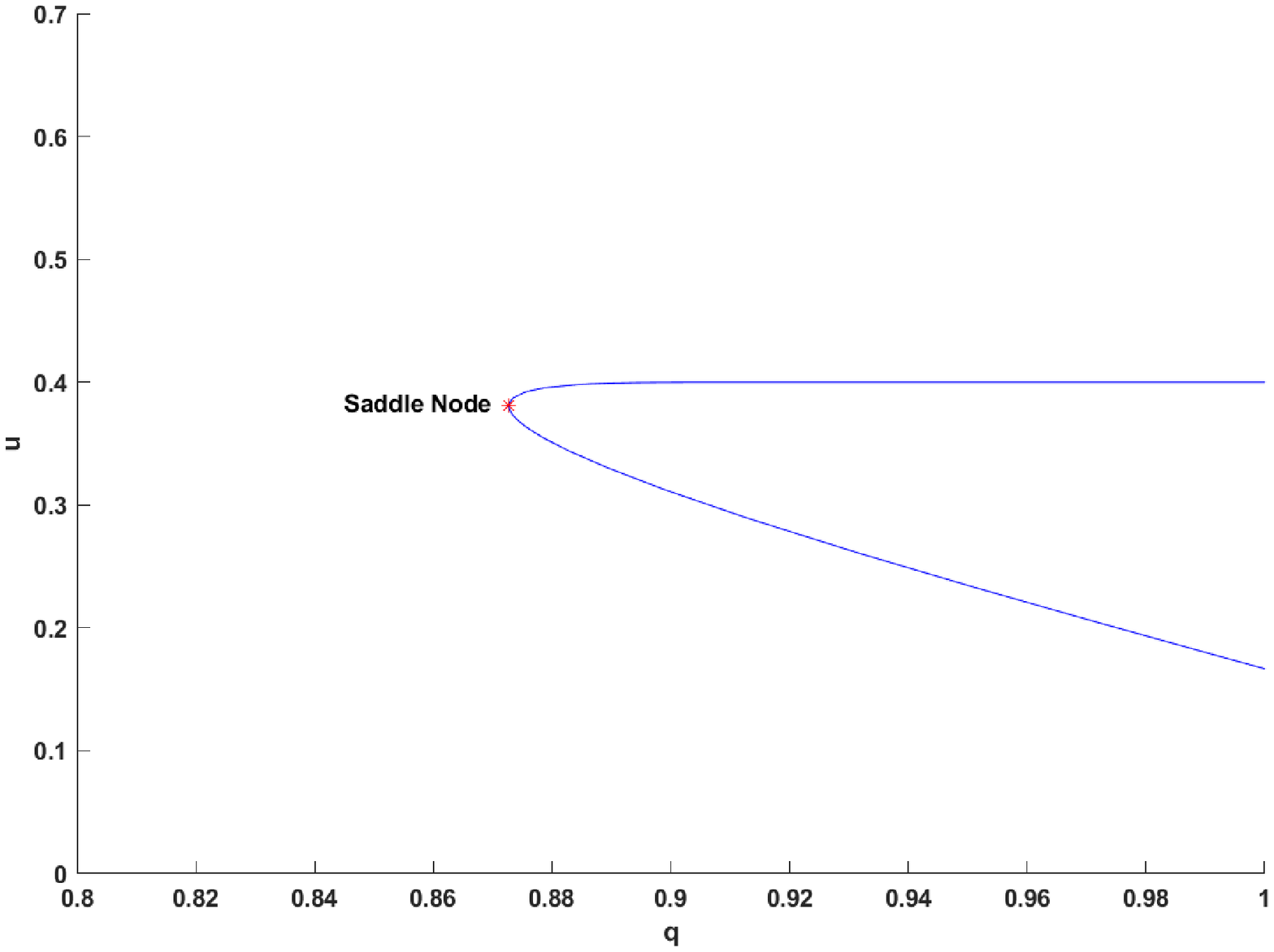}
        \caption{The parameters used are  $a_1=0.4,a_2=0.6,b_1=1,b_2=0.6,c_1=0.3,c_2=0.8,p=0.6,q=0.87272181$. }
        \label{sn_matcont}
    \end{subfigure}
    \hfill
    \begin{subfigure}[b]{.475\linewidth}
        \includegraphics[width=6cm, height=6cm]{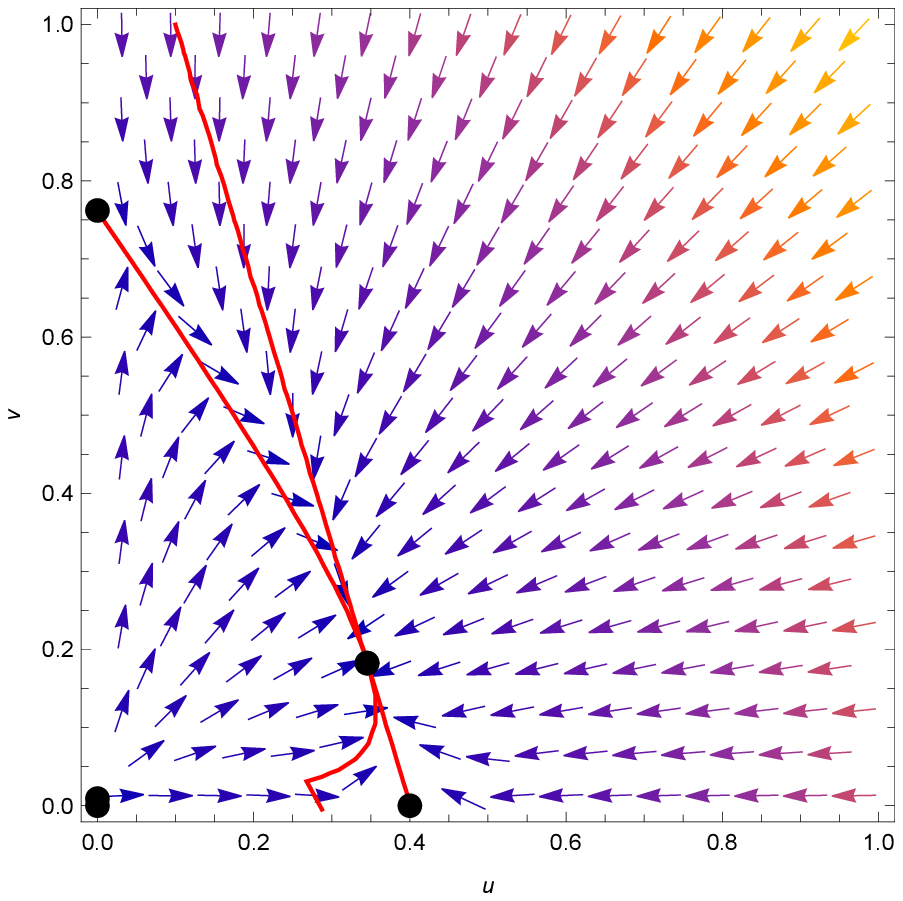}
        \caption{The parameters used are $a_1=0.4,a_2=0.6,b_1=1,b_2=0.6,c_1=0.3,c_2=0.8,p=0.6,q=0.87272181$.}
        \label{sn_nullcline}
    \end{subfigure}

    \begin{subfigure}[b]{.475\linewidth}
        \includegraphics[width=6cm, height=6cm]{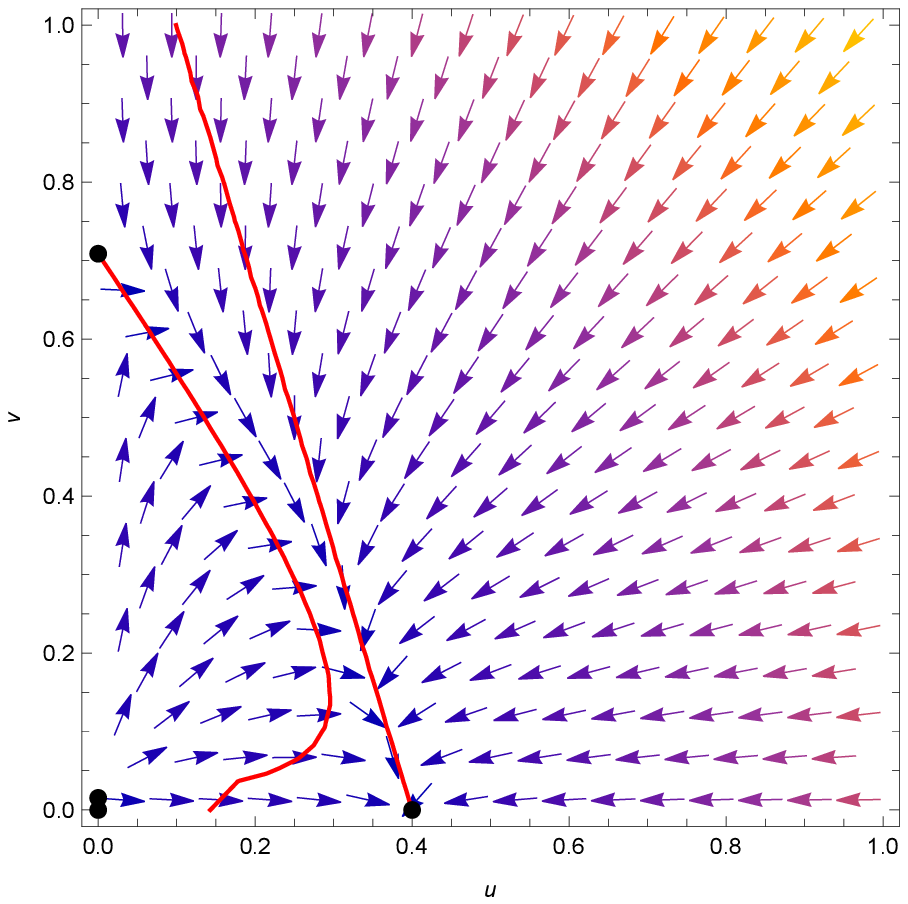}
        \caption{The parameters used are $a_1=0.4,a_2=0.6,b_1=1,b_2=0.6,c_1=0.3,c_2=0.8,p=0.6,q=0.86$. }
        \label{sn_nullcline_no_int}
    \end{subfigure}
    \hfill
    \begin{subfigure}[b]{.475\linewidth}
        \includegraphics[width=6cm, height=6cm]{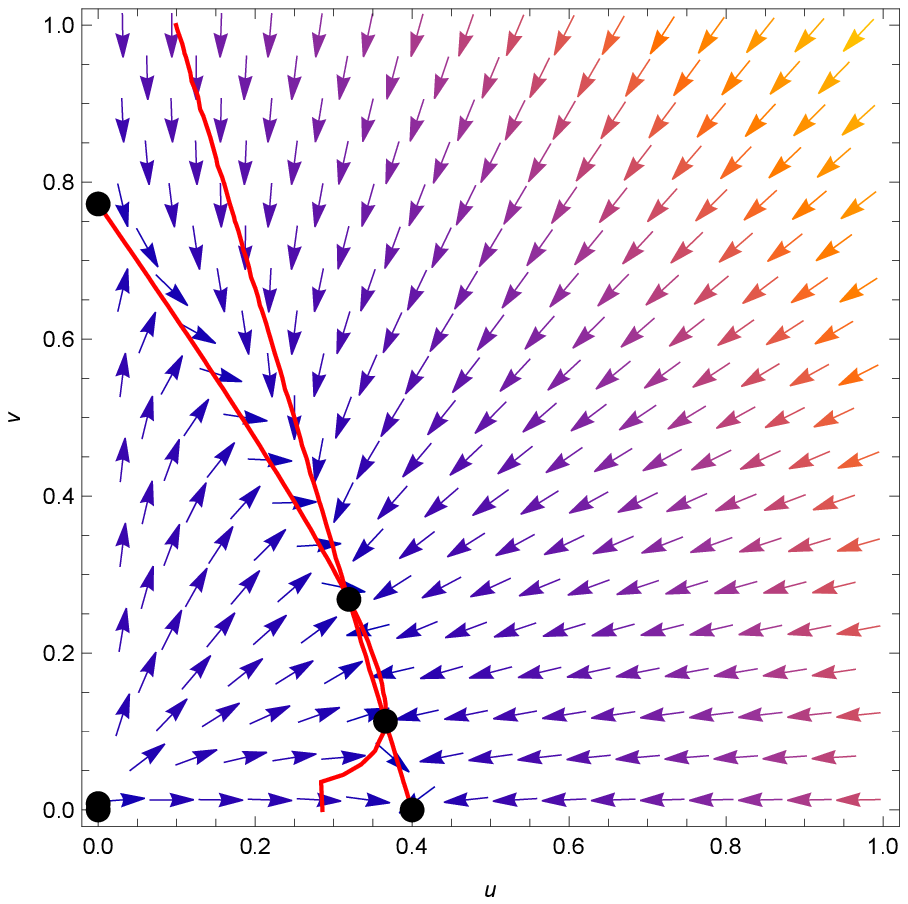}
        \caption{The parameters used are $a_1=0.4,a_2=0.6,b_1=1,b_2=0.6,c_1=0.3,c_2=0.8,p=0.6,q=0.92.$ }
        \label{sn_nullcline_two_int}
    \end{subfigure}
    
    \caption{The figures show the existence of Saddle Node bifurcation with change in $q$ parameter. Figure \ref{sn_matcont} and \ref{sn_nullcline} show the bifurcation diagram and the nullcline dynamics at the bifurcation threshold, respectively.Fig \ref{sn_nullcline_no_int} and fig \ref{sn_nullcline_two_int} show the existence of no and two positive equilibria with decreasing and increasing the $q$ parameter respectively from the bifurcation threshold for the same parameter set.}
    \label{saddle_node}
\end{figure*}

\subsection{Pitchfork Bifurcation}

\begin{theorem}
\label{pitchfork_th}
Consider system (\ref{Eqn:2}), when the boundary equilibria $E_v(0,\Bar{v})$ satisfies the nullcline equation $qa_2-b_2\Bar{v}-(1-q)\Bar{v}^{p-1}=0$ then a pitchfork bifurcation occurs as $q \to q^*$, where,\\
\begin{equation*}
    q^*=\frac{\Bar{v}}{a_2}\left(\frac{c_2(b_1b_2-c_1c_2)}{(1-p)}+b_2\right) \nonumber
\end{equation*}

\end{theorem}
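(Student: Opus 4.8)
The plan is to establish the pitchfork via Sotomayor's theorem \cite{perko2013differential}, exactly as in the saddle-node analysis of the preceding subsection, but now anchored at the boundary equilibrium $E_v(0,\bar v)$ rather than at $E_{max}$. First I would evaluate the Jacobian $J(u,v)$ at $E_v(0,\bar v)$; since $u=0$ it is lower triangular, with eigenvalues $\lambda_1 = a_1 - c_1\bar v$ (transverse to the invariant line $\{u=0\}$) and $\lambda_2 = a_2 q - 2b_2\bar v - p(1-q)\bar v^{p-1}$ (tangent to it). A new interior branch can detach from the boundary only through the transverse direction, so I would take $\lambda_1=0$ as the simple zero eigenvalue (checking $\lambda_2\neq 0$ at threshold) and record the right and left null vectors $V = (1,\, c_2\bar v/\lambda_2)^{T}$ and $W = (1,0)^{T}$.

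Next I would check the transversality conditions that separate a pitchfork from a saddle-node and from a transcritical bifurcation. Writing $F=(F_1,F_2)$ for the right-hand side of \eqref{Eqn:2}, the parameter derivative is $F_q = (0,\, a_2 v + v^p)^{T}$, so $W^{T}F_q = 0$; this is the signature that rules out a saddle-node and is shared by pitchfork and transcritical bifurcations. The decisive quadratic condition $W^{T}D^2F(E_v;q)(V,V) = -2b_1V_1^2 - 2c_1 V_1 V_2$ must vanish for a pitchfork; imposing $b_1 V_1 + c_1 V_2 = 0$ together with the null-vector ratio $V_2/V_1 = c_2\bar v/\lambda_2$ fixes $\lambda_2$, and substituting the boundary nullcline relation $(1-q)\bar v^{p-1} = a_2 q - b_2 \bar v$ (so that $\lambda_2 = a_2 q(1-p) - b_2\bar v(2-p)$) yields the defining equation for $q^{*}$. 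Finally the harvesting term $-(1-q)v^p$, whose third $v$-derivative $\propto \bar v^{p-3}$ is nonzero for $0<p<1$, is what I would use to certify a nonvanishing cubic coefficient and hence a genuine pitchfork rather than a higher-order degeneracy.

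The main obstacle is precisely this last step, and it is sharper than it looks. Because the Jacobian is triangular, the left null vector $W=(1,0)^{T}$ annihilates the whole second component, so the literal Sotomayor quantities $W^{T}\!\left(DF_q\,V\right)$ and $W^{T}D^3F(V,V,V)$ both vanish and cannot by themselves diagnose the bifurcation. I would therefore carry out an explicit one-dimensional center-manifold reduction at $q=q^{*}$: parametrize the center manifold as $v = \bar v + h(u,q)$ with $h = O(u^2)$, solve the invariance equation for the quadratic and cubic Taylor coefficients of $h$, and reduce the flow to a scalar equation $\dot s = g(s,q)$ on the manifold. The vanishing of the quadratic coefficient of $g$ reproduces the condition defining $q^{*}$, while its cubic coefficient now receives a genuine contribution from the $v^p$ nonlinearity \emph{through the curvature} $h$; showing this coefficient is nonzero (and of the sign that decides sub- versus super-criticality) is the crux. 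A subsidiary technical point I would settle first is that $v^p$ is only continuous at $v=0$ but is smooth at $\bar v>0$, so all the derivatives above are legitimate provided $\bar v$ is bounded away from the extinction state.
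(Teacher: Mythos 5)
Your proposal is correct in substance but follows a genuinely different route from the paper. The paper's proof is geometric: it writes the nullclines as graphs $u=f(v)=\frac{a_1}{b_1}-\frac{c_1}{b_1}v$ and $u=g(v)$, demands that at the bifurcation the interior equilibria collide with $E_v(0,\bar{v})$ and that the slopes match, $-\frac{c_1}{b_1}=-\frac{b_2}{c_2}+\frac{(1-q)(1-p)}{c_2}\bar{v}^{p-2}$, eliminates $(1-q)\bar{v}^{p-1}$ via the boundary relation, and then declares the pitchfork by counting equilibria on either side of $q^*$ (two interior plus one boundary merging into a single boundary equilibrium). Your normal-form route lands on exactly the same threshold: imposing $b_1V_1+c_1V_2=0$ on the null-vector ratio $V_2/V_1=c_2\bar{v}/\lambda_2$ forces $\lambda_2=-c_1c_2\bar{v}/b_1$, and since on the boundary nullcline $\lambda_2=-b_2\bar{v}+(1-p)(1-q)\bar{v}^{p-1}$, this is precisely the paper's slope-matching equation; both simplify to $q^*=\frac{\bar{v}}{a_2}\bigl(b_2+\frac{b_1b_2-c_1c_2}{b_1(1-p)}\bigr)$, which is what the paper's own simplification produces (the factor $c_2$ in the displayed statement appears to be a slip for $1/b_1$). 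What each approach buys is different. The paper gets, cheaply, the global equilibrium count that justifies the word ``pitchfork,'' but it never verifies any non-degeneracy condition. You, in contrast, correctly diagnose the real technical obstruction: because the $v$-axis is invariant, the Jacobian at $E_v$ is triangular, the left null vector is $W=(1,0)^T$, and the literal Sotomayor quantities $W^TF_q$, $W^T(DF_qV)$ and $W^TD^3F(V,V,V)$ all vanish, so only a center-manifold reduction can produce the effective unfolding and cubic coefficients --- a check the paper omits entirely. Two caveats on your plan: the cubic coefficient of the reduced flow is named ``the crux'' but not computed, so your argument as written is incomplete at exactly the point where the paper is informal; and the ansatz $v=\bar{v}+h(u,q)$ with $h=O(u^2)$ is inconsistent with the center direction $V=(1,-b_1/c_1)^T$, which is not parallel to the $u$-axis, so $h$ must carry the linear term $-(b_1/c_1)u$ or you should pass to eigencoordinates before expanding. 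Finally, your condition $\lambda_1=a_1-c_1\bar{v}=0$ pins $\bar{v}=a_1/c_1$ at threshold; this is the precise content of the paper's phrase that the interior equilibria collide with the boundary equilibrium, and it should be stated explicitly when solving for $q^*$.
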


\begin{proof}
We obtain the bifurcation parameter by studying the gradient of the nullclines.\\
The nullclines from system (\ref{Eqn:2}) are, 

 \begin{equation*}
  \begin{split}
    u &= f(v)=\frac{a_1}{b_1}-\frac{c_1}{b_1}v; \\   \nonumber 
    u &= g(v)=\frac{qa_2}{c_2}=\frac{b_2}{c_2}v-\frac{(1-q)}{c_2}v^{p-1} 
      \end{split} 
  \end{equation*}
    The slope of the nullclines are to be determined at the equilibria  $E_v(0,\Bar{v})$. The explicit form of $\Bar{v}$ can be derived from the v-nullclines as it is a root of the equation $qa_2-b_2\Bar{v}-(1-q)\Bar{v}^{p-1}=0$. The gradiant of the nullclines at $E_v(0,\Bar{v})$ is given by $\frac{df}{dv}|_{v=\Bar{v}}=-\frac{c_1}{b_1}$ and $\frac{dg}{dv}|_{v=\Bar{v}}=-\frac{b_2}{c_2}+\frac{(1-q)(1-p)}{c_2}\Bar{v}^{p-2}$. When pitchfork bifurcation takes place, the interior equilibria collide with the boundary equilibrium and the slopes of the nullclines are equal. Thus, $-\frac{c_1}{b_1}=-\frac{b_2}{c_2}+\frac{(1-q)(1-p)}{c_2}\Bar{v}^{p-2}$. Simplifying the equality with the fact that $qa_2-b_2\Bar{v}-(1-q)\Bar{v}^{p-1}=0$ yields the following result,
    \begin{equation*}
    q^*=\frac{\Bar{v}}{a_2}\left(\frac{c_2(b_1b_2-c_1c_2)}{(1-p)}+b_2\right) \nonumber
\end{equation*}.\\
As $q$ decreased to $q^*$ the $g(v)$ nullcline moves downward, and the three interior equilibriums come closer together. This follows by the shape of the nullclines, under the parametric restriction enforced. Since the two slopes of prey and predator nullclines at x = 0 are chosen to be the same, by continuity, the three equilibriums merge into one as $q \to q^*$. Now as $q$ is further decreased, the $g(v)$ nullcline is completely below the prey nullcline, $g(x) < f(x), \forall x$, and there is a single boundary equilibrium. Thus by definition, pitchfork bifurcation has occurred.\\
For a set parameter space, pitchfork bifurcation can be seen in figure \ref{pitchfork}. It can be seen that at two different values of $q$, saddle-node bifurcation and pitchfork bifurcation takes place.
 \begin{figure*}
    \begin{subfigure}[b]{.475\linewidth}
        \includegraphics[width=6cm, height=6cm]{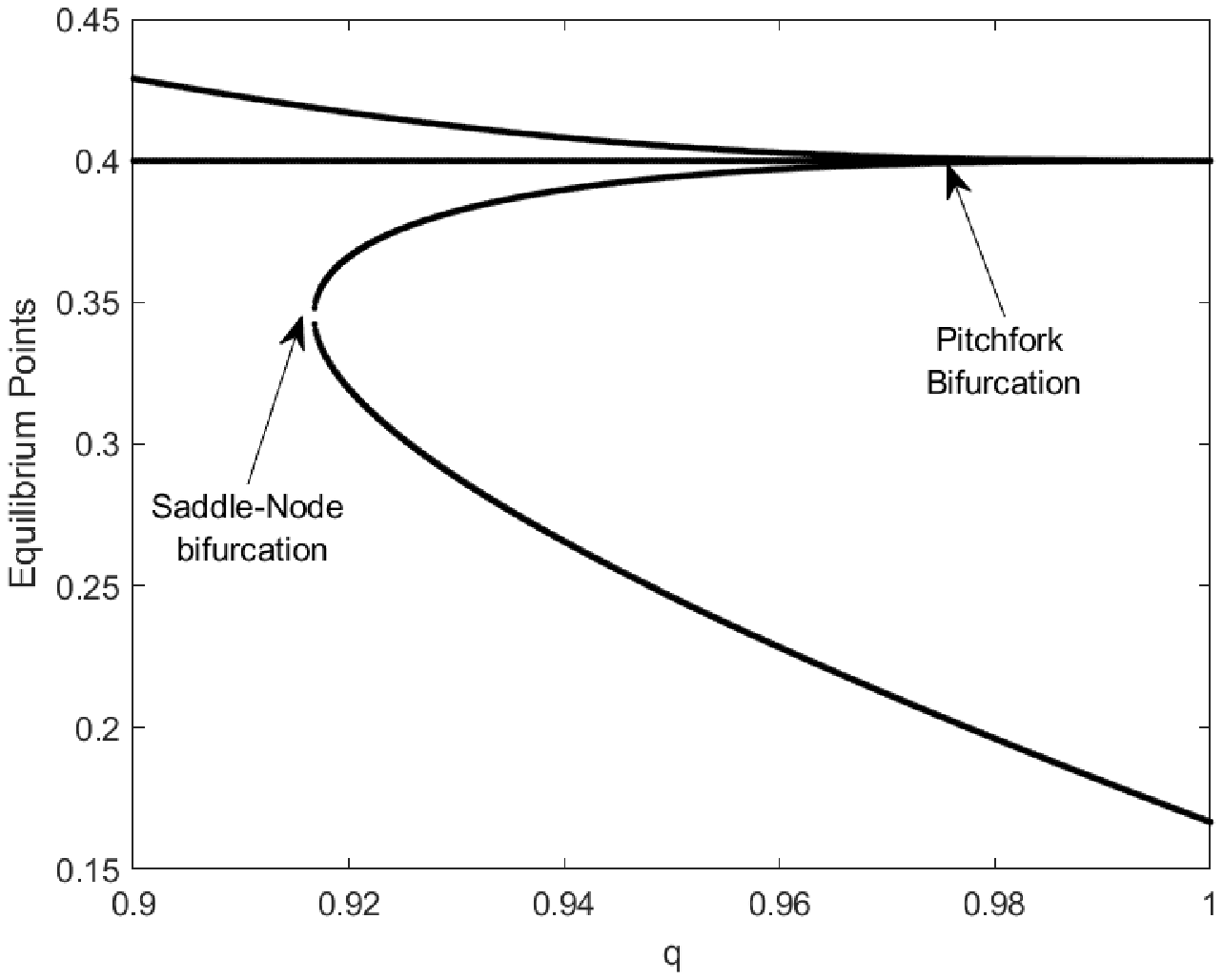}
        \caption{}
        \label{both_bif}
    \end{subfigure}
    \hfill
    \begin{subfigure}[b]{.475\linewidth}
        \includegraphics[width=6cm, height=6cm]{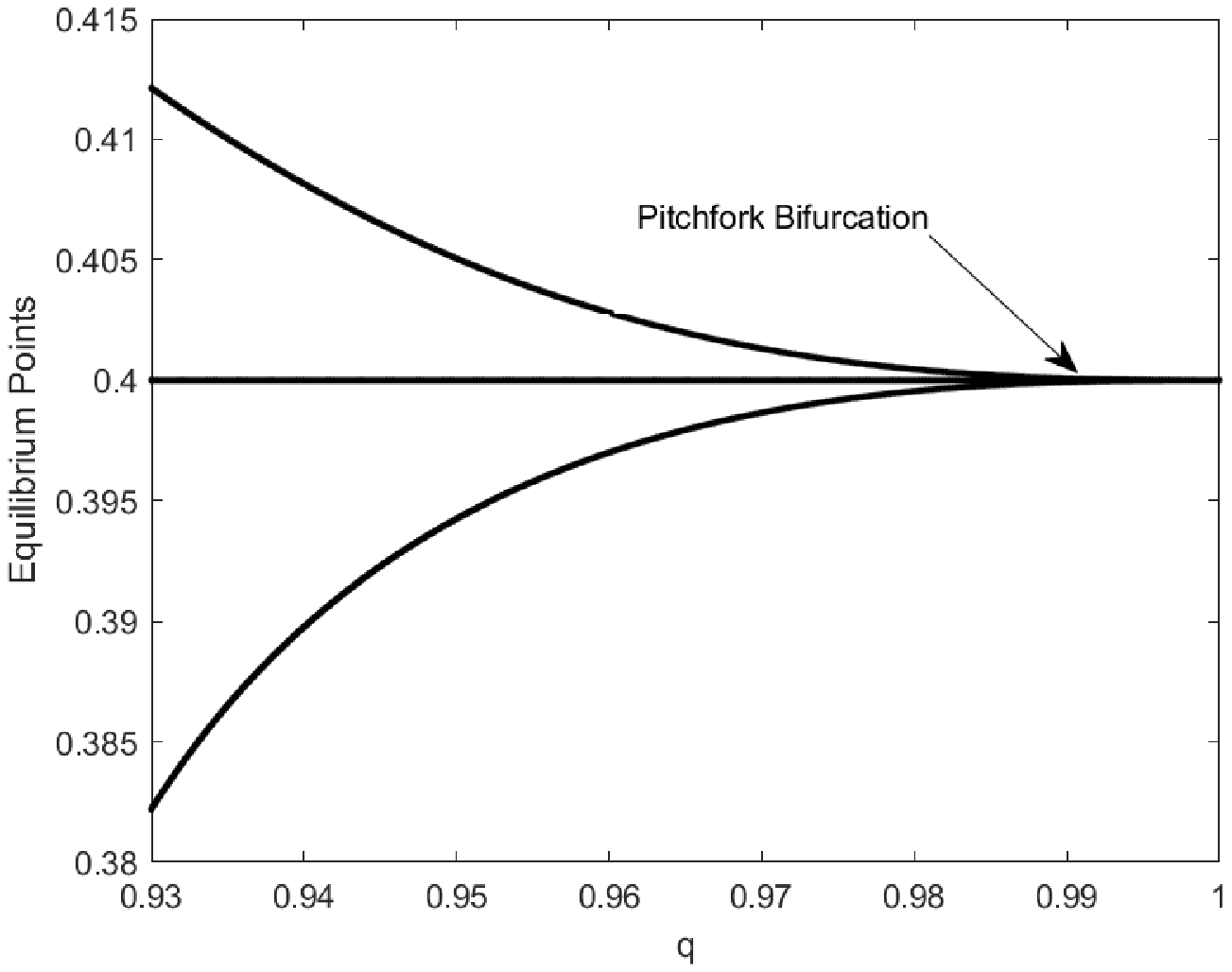}
         \caption{}
         \label{pitchfork}
    \end{subfigure}
    \caption{The parameter set used for the figures is $a_1=0.4, a_2=0.6, b_1=1, b_2=.6, c_1=.3, c_2=.8, p=0.6$. The figures show the equilibria of the system corresponding to changes in parameter $q$. The change in q shows both bifurcations taking place in figure \ref{both_bif} and the zoomed-in figure shows the pitchfork bifurcation in figure \ref{pitchfork}.} 
    \label{pitch_bif}
\end{figure*}

\end{proof}

\section{Comparison with Classical Competition Model}

The system \ref{Eqn:2} can be trivially transformed to a classical competition system \ref{Eqn:1} if $q=1$. We study the different cases of how the dynamics of the system change from the classical case with a change of parameter $p$ and $q$ in the range $(0,1)$.
\subsection{Competitive Exclusion}
\hfill\\
    The classical system shows that when $\dfrac{a_{1}}{a_{2}}<\min\left\lbrace\dfrac{b_{1}}{c_{2}},\dfrac{c_{1}}{b_{2}}\right\rbrace$ then $E_v = (0,\frac{a_2}{b_2})$ is globally asymptotically stable. So, for any parameter set we choose with $q=1$ and satisfying the condition then population $v$ wins, and population $u$ goes to extinction.

If we choose a $q$ very close to 1 then the dynamics change. The parameter chosen satisfying the condition can be seen in figure \ref{comp_exclusion_classic} which satisfies the condition and $E_v = (0,\frac{a_2}{b_2})$ is globally asymptotically stable. With the choice of $q=0.91$ and $p=0.6$, we see that $E_v = (0,\frac{a_2}{b_2})$ becomes unstable and we get two stable equilibria i.e. u boundary equilibrium and an interior equilibrium on the two sides of the stable manifold of an interior saddle equilibrium as seen in figure \ref{comp_exclusion_non_classic}. So, for any initial data below the stable manifold $u$ boundary equilibrium is stable where v population goes to extinction while above the stable manifold coexistence takes place where $E^*$ equilibrium becomes stable. Thus, the $u$ population is always stable in the system and does not go extinct which is the case in the classical model.

\subsection{Weak Competition} \hfill\\
    The classical competition model provides evidence of weak competition when under parametric restrictions coexistence of two species is possible and stable over time. The parametric restriction to have a stable coexistence equilibrium is $\frac{c_2}{b_1}<\frac{a_2}{a_1}<\frac{b_2}{c_1}$ and $b_1b_2-c_1c_2>0$.In the classical competition model (\ref{Eqn:1}) we can get one interior equilibrium that can be asymptotically stable under these restrictions which in system \ref{Eqn:2} can be achieved by satisfying these parametric restrictions and with $q=1$ as seen in \ref{weak_comp_classic}.

In system \ref{Eqn:2} if we use $0<q<1$ then we can get two interior equilibria with one stable point and the other as a saddle point. Due to the saddle, we can have bi-stability where the stable manifold divides the invariant subspace into two regions, depending on the initial condition the populations can either go to the interior equilibrium or they can move to $u$ boundary equilibrium as seen in figure \ref{weak_comp_non_classic}.

 \begin{figure*}
    \begin{subfigure}[b]{.475\linewidth}
        \includegraphics[width=6cm, height=6cm]{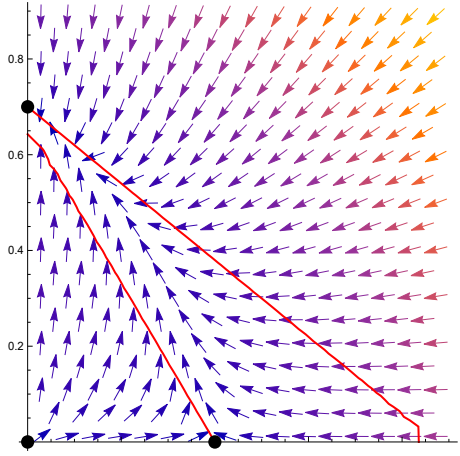}
        \caption{The parameters used are  $a_1=0.4,a_2=0.7,b_1=1,b_2=1,c_1=0.6,c_2=0.8$. }
        \label{comp_exclusion_classic}
    \end{subfigure}
    \hfill
    \begin{subfigure}[b]{.475\linewidth}
        \includegraphics[width=6cm, height=6cm]{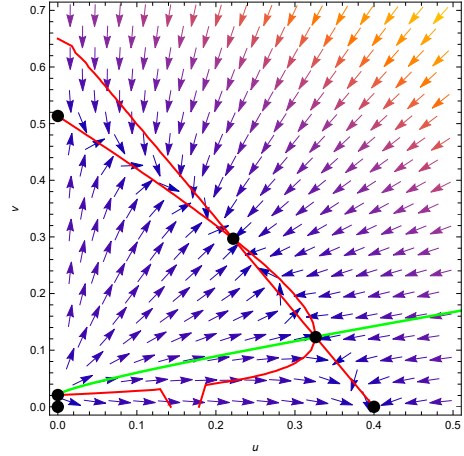}
        \caption{The parameters used are $a_1=0.4,a_2=0.7,b_1=1,b_2=1,c_1=0.6,c_2=0.8,p=0.6,q=0.91$.}
        \label{comp_exclusion_non_classic}
    \end{subfigure}

    \begin{subfigure}[b]{.475\linewidth}
        \includegraphics[width=6cm, height=6cm]{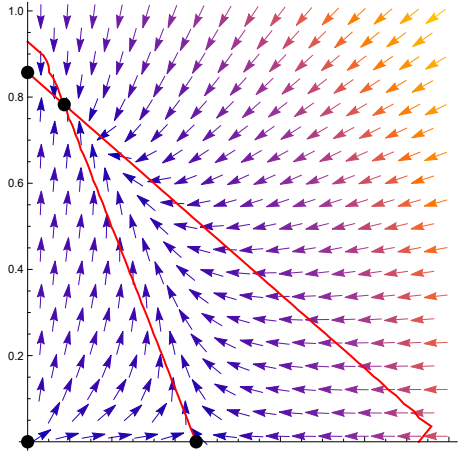}
        \caption{The parameters used are $a_1=0.4,a_2=0.6,b_1=1,b_2=0.7,c_1=0.4,c_2=0.6$. }
        \label{weak_comp_classic}
    \end{subfigure}
    \hfill
    \begin{subfigure}[b]{.475\linewidth}
        \includegraphics[width=6cm, height=6cm]{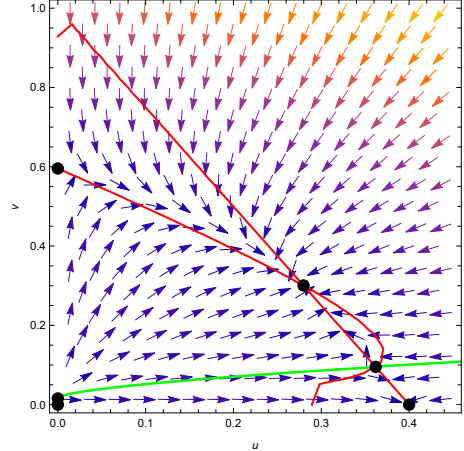}
        \caption{The parameters used are $a_1=0.4,a_2=0.6,b_1=1,b_2=0.7,c_1=0.4,c_2=0.6,p= 0.6,q=0.93$. }
        \label{weak_comp_non_classic}
    \end{subfigure}
    
    \caption{The horizontal and vertical axes are the $u$ and $v$ populations respectively. The red lines are the nullclines and the points are the equilibria. The green lines in \ref{comp_exclusion_non_classic} and \ref{weak_comp_non_classic} are the stable manifolds of the saddle equilibrium. The classical competition cases with $q=1$ are shown in fig \ref{comp_exclusion_classic} and \ref{weak_comp_classic}. The comparison to the classical cases with change $p$ and $q$ are shown in fig \ref{comp_exclusion_non_classic} and \ref{weak_comp_non_classic}. }
    \label{classical_comparison}
\end{figure*}

\begin{figure*}[H]
    \begin{subfigure}[b]{.475\linewidth}
        \includegraphics[width=\linewidth]{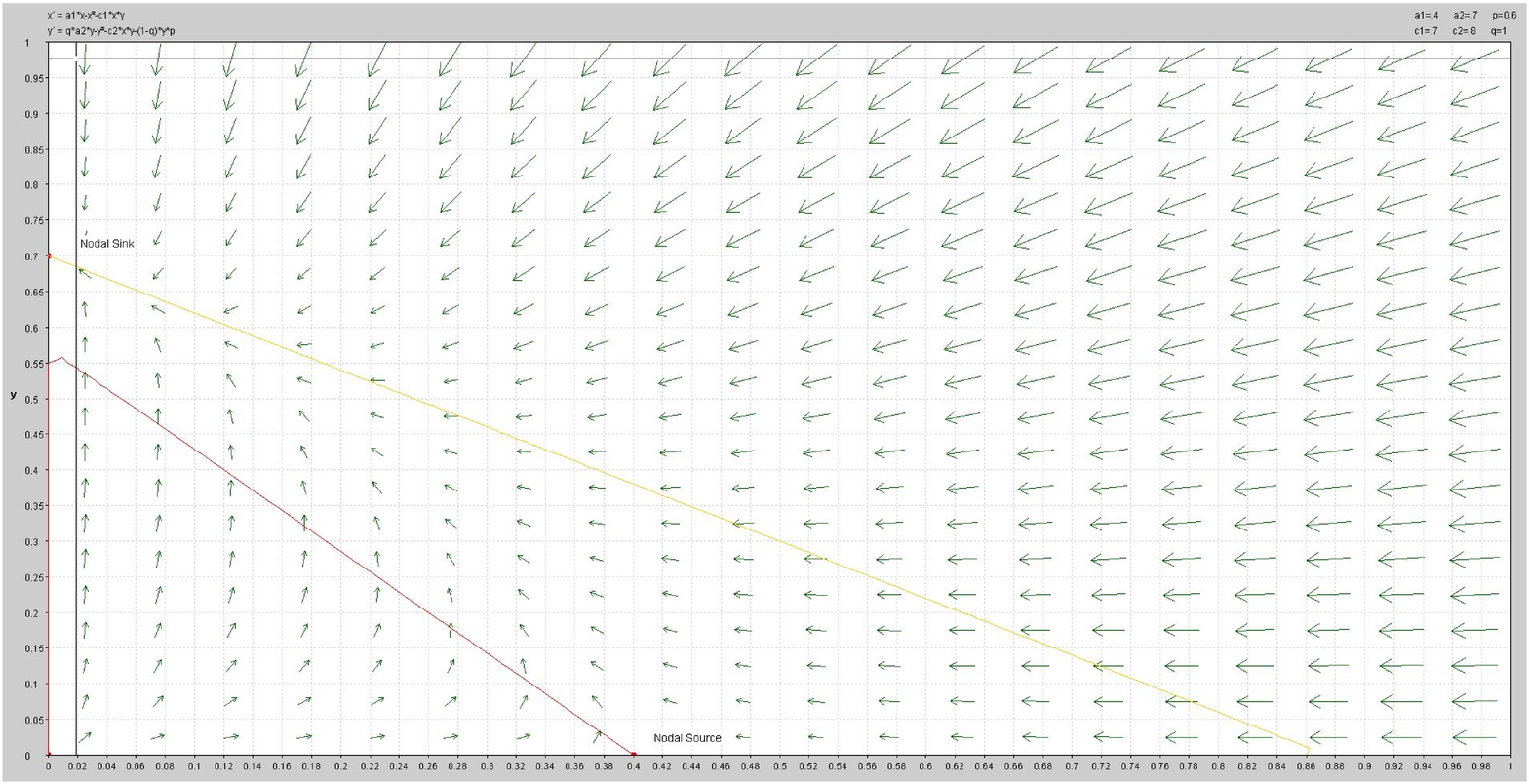}
        \caption{The parameters used for the time series are $a_1=0.1;a_2=0.5;b_1=0.1;b_2=0.4;c_1=0.1;c_2=0.4;$ }
        \label{fig:6MB_BFS}
    \end{subfigure}
    \hfill
    \begin{subfigure}[b]{.475\linewidth}
        \includegraphics[width=\linewidth]{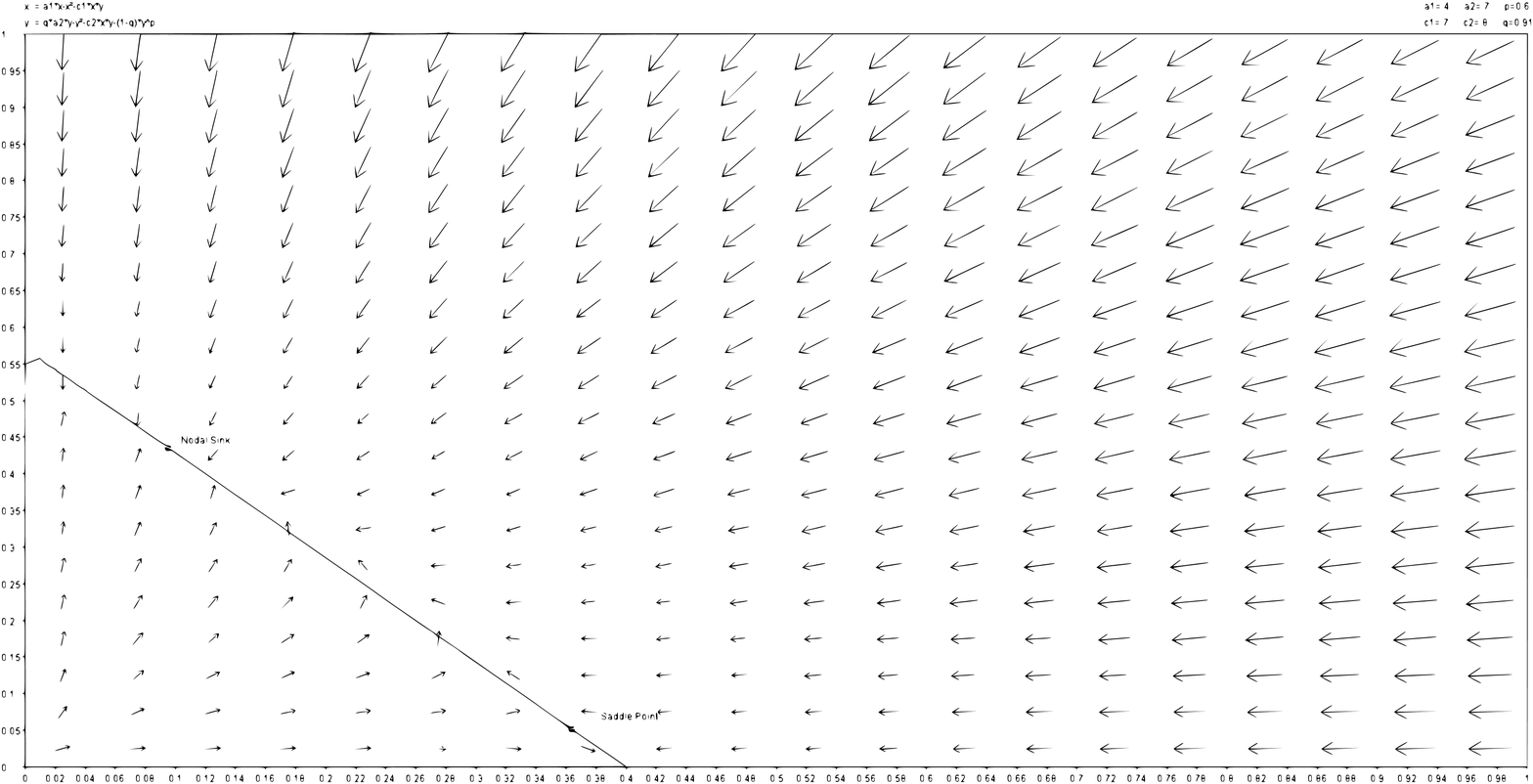}
        \caption{The parameters used for the time series are $a_1=0.1;a_2=0.3;b_1=0.1;b_2=0.4;c_1=0.1;c_2=0.4;$}
        \label{fig:25MB_bfs}
    \end{subfigure}

    \begin{subfigure}[b]{.475\linewidth}
        \includegraphics[width=\linewidth]{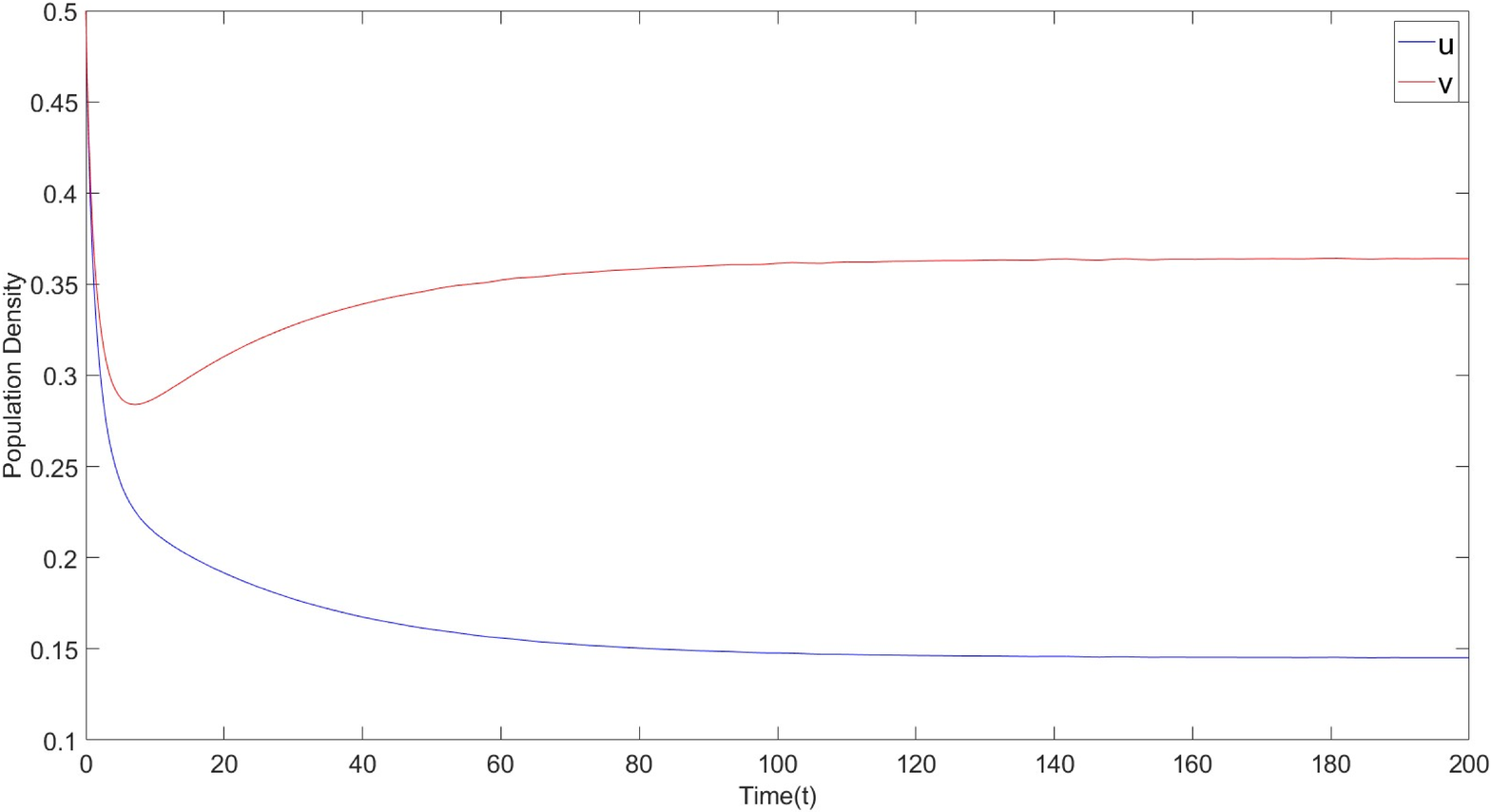}
        \caption{The different equilibrium points with parameters satisfying the stability of the interior equilibrium point. }
        \label{fig:6MB_mm}
    \end{subfigure}
    \hfill
    \begin{subfigure}[b]{.475\linewidth}
        \includegraphics[width=\linewidth]{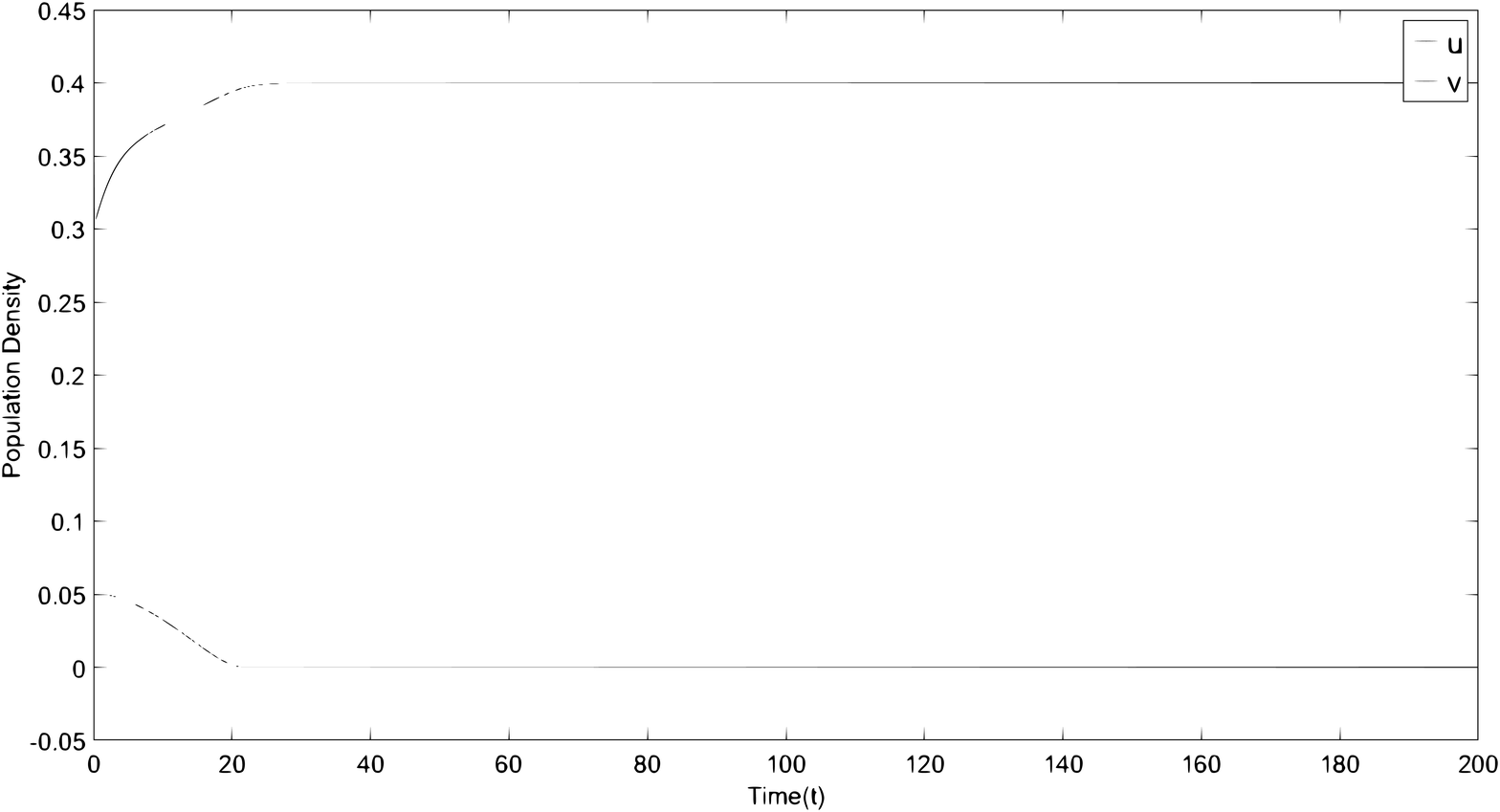}
        \caption{The parameters used for the time series are $a_1=0.1;a_2=0.3;b_1=0.1;b_2=0.4;c_1=0.1;c_2=0.2;$ }
        \label{fig:25MB_mm}
    \end{subfigure}
    \caption{The horizontal and vertical axes are the $u$ and $v$ populations respectively. The red lines are the nullclines and the points are the equilibria. The green lines in \ref{comp_exclusion_non_classic} and \ref{weak_comp_non_classic} are the stable manifolds of the saddle equilibrium. The classical competition cases with $q=1$ are shown in fig \ref{comp_exclusion_classic} and \ref{weak_comp_classic}. The comparison to the classical cases with change $p$ and $q$ are shown in fig \ref{comp_exclusion_non_classic} and \ref{weak_comp_non_classic}. }
    \label{classical_comparison}
\end{figure*}

\newpage
\section{The Spatially Explicit (PDE) Case}

The spatially inhomogeneous problem has been intensely investigated in the past $2$ decades \cite{Cantrell2003,  Chen2020, DeAngelis2016, He2019, He2013a, He2013b, He2016a, He2016b, Hutson2003, Lou2006a, Lou2006b, Liang2012, Li2019, Lam2012, Nagahara2018, Nagahara2020}. The premise here is that $u,v$ do not have resources that are uniformly distributed in space, rather there is a spatially dependent resource function $m(x)$. We consider again a normalized generalization of the classical formulation, where there are $2$ parameters $b$ and $c$ for inter/intra specific kinetics as opposed to $6$ kinetic parameters in \eqref{eq:Ge1} from earlier. The parameter choice $0<p<1$, enables a FTEM in $u$.

\begin{equation}
\label{eq:Ge1pn}
\left\{ \begin{array}{ll}
u_{t} &~ = d_{1}\Delta u +  m(x)u - u^{2} - cu^{p}v ,  \quad  0 < p \leq 1, \\[2ex]
v_{t} &~ =  d_{2}\Delta v + m(x) v -  v^{2} - buv    ,
\end{array}\right.
\end{equation}

\begin{equation}
\label{eq:Ge1bh}
\nabla  u \cdot n = \nabla  v \cdot n  = 0, on \ \partial \Omega\ , \ u(x,0) =  u_{0}(x) > 0, \  v (x,0) =  v_{0}(x) > 0.
\end{equation}
Note, $p=1$, is the classical case. We consider $m$ to be non-negative on $\Omega$ and bounded. We recap a seminal classical result \cite{Dockery1998, Hastings}, which shows that the slower diffuser wins.

\begin{theorem}
Consider \eqref{eq:Ge1pn}-\eqref{eq:Ge1bh}, when $b=c=p=1$, and $d_{1} < d_{2}$, solutions initiating from any positive initial data $(u_{0}(x), v_{0}(x))$ converge uniformly to $(u^{*}(x),0)$. 
\end{theorem}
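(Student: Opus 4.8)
The plan is to reduce the two-species problem to the analysis of its two semi-trivial steady states and then invoke the theory of monotone dynamical systems. With $b=c=p=1$ the system \eqref{eq:Ge1pn} becomes the Lotka-Volterra competition-diffusion system
\[
u_t = d_1\Delta u + u(m(x)-u-v), \qquad v_t = d_2\Delta v + v(m(x)-u-v),
\]
under \eqref{eq:Ge1bh}. First I would recall the scalar logistic fact: the single equation $w_t = d\Delta w + w(m-w)$ with Neumann data and positive initial condition has a unique positive steady state $\theta_d$, globally asymptotically stable among positive solutions \cite{Cantrell2003}. Hence the full system has exactly the two semi-trivial equilibria $(\theta_{d_1},0)$ and $(0,\theta_{d_2})$, and the claimed limit is $u^*=\theta_{d_1}$. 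Because the nonlinearity is competitive, the parabolic maximum principle makes the semiflow strongly order-preserving with respect to the competitive cone $\{(u,v):u\ge 0,\ v\le 0\}$; standard a priori bounds ($0<u,v\le\|m\|_\infty$ eventually) give dissipativity and a compact global attractor contained in the competitive order interval between $(0,\theta_{d_2})$ and $(\theta_{d_1},0)$.

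The heart of the argument is a monotonicity lemma for principal eigenvalues. Linearising in the $v$-direction at $(\theta_{d_1},0)$ leads to the Neumann eigenvalue problem $d_2\Delta\phi+(m-\theta_{d_1})\phi+\lambda\phi=0$; write $\lambda_1(d')$ for the principal eigenvalue of the same problem with $d_2$ replaced by a variable rate $d'$. Since $\theta_{d_1}$ solves $d_1\Delta\theta_{d_1}+(m-\theta_{d_1})\theta_{d_1}=0$, the positive function $\theta_{d_1}$ is an eigenfunction at $d'=d_1$ with eigenvalue $0$, so $\lambda_1(d_1)=0$. From the variational characterisation
\[
\lambda_1(d')=\max_{\phi\neq 0}\frac{\int_\Omega (m-\theta_{d_1})\phi^2 - d'\int_\Omega|\nabla\phi|^2}{\int_\Omega\phi^2},
\]
its derivative in $d'$ equals $-\int_\Omega|\nabla\phi_{d'}|^2/\int_\Omega\phi_{d'}^2\le 0$, and is strictly negative whenever $m$ is non-constant, since then the principal eigenfunction is non-constant. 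Thus $\lambda_1$ is strictly decreasing, and $d_1<d_2$ forces $\lambda_1(d_2)<0$, so $(\theta_{d_1},0)$ is linearly stable. Running the identical computation at $(0,\theta_{d_2})$, now with potential $m-\theta_{d_2}$ and the diffusion rate $d_1$, gives principal eigenvalue $0$ at rate $d_2$ and hence a strictly positive value at rate $d_1<d_2$, so $(0,\theta_{d_2})$ is linearly unstable.

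Finally I would feed these sign conditions into the dichotomy for strongly monotone competitive systems \cite{Dockery1998, Hastings}: when one semi-trivial equilibrium is stable and the other unstable, no coexistence steady state exists and the stable equilibrium attracts every positive orbit, the convergence being uniform by parabolic smoothing and compactness of the attractor. This yields uniform convergence to $(\theta_{d_1},0)=(u^*,0)$. I expect the eigenvalue-monotonicity step to be the main obstacle: it is precisely the strict dependence of $\lambda_1$ on the diffusion rate, which degenerates exactly when $m$ is constant, that breaks the $u \leftrightarrow v$ symmetry and encodes the principle that the slower diffuser wins. A secondary technical point is justifying within the monotone framework that the stable/unstable pair genuinely excludes an interior equilibrium, which is what upgrades local stability into the global, initial-data-independent conclusion.
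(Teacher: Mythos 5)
The paper itself does not prove this theorem; it is recapped from the cited literature (\cite{Dockery1998, Hastings}), so your attempt can only be compared against the classical Dockery--Hutson--Mischaikow--Pernarowski argument. Your sketch follows exactly that route: scalar logistic theory giving the unique positive steady states $\theta_{d}$, linearization at the two semi-trivial equilibria, and strict monotonicity in the diffusion rate of the principal eigenvalue of $d'\Delta + (m-\theta_{d_1})$. That eigenvalue step --- the actual heart of the proof --- is correctly set up, including the observation that strictness degenerates exactly when $m$ is constant (and indeed the theorem must be read with $m$ non-constant: for constant $m$ every constant pair with $u^*+v^*=m$ is an equilibrium and the conclusion fails).

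The genuine gap is in your final step. You assert, as part of the ``dichotomy for strongly monotone competitive systems,'' that a stable/unstable pair of semi-trivial equilibria excludes coexistence states, deferring this as ``a secondary technical point\dots within the monotone framework.'' No such theorem exists: for heterogeneous reaction--diffusion competition, mixed stability of the semi-trivial states is compatible with interior equilibria --- index counting only forces the indices of coexistence states to sum to zero, which permits, e.g., a saddle--node pair; see the literature on loops and branches of coexistence states cited in this very paper \cite{Lou2006b}. The Hsu--Smith--Waltman-type dichotomy you invoke takes non-existence of coexistence states as a \emph{hypothesis}, not a conclusion. In the equal-kinetics setting the fix is short and reuses your own key lemma: if $(u^*,v^*)$ were a coexistence state, then $u^*>0$ and $v^*>0$ would be positive (hence principal) eigenfunctions, both with eigenvalue $0$, of $d_1\Delta + (m-u^*-v^*)$ and $d_2\Delta + (m-u^*-v^*)$ respectively --- the same potential with two distinct diffusion rates --- contradicting strict monotonicity of the principal eigenvalue in $d$, unless $m-u^*-v^*$ is constant, which (integrating the $u^*$-equation with Neumann data) forces $m$ constant. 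With that lemma supplied, the dichotomy plus instability of $(0,\theta_{d_2})$ yields the claimed global uniform convergence to $(\theta_{d_1},0)$.
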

That is the slower diffuser wins, in the case of equal kinetics.
However, a difference in the interspecific kinetics via FTEM can cause the slower diffuser to \emph{lose}, depending on the initial conditions \cite{parshad2021some}. 

The following conjecture was made (and some numerical evidence provided for) in \cite{parshad2021some} for the system:

\begin{equation}
\label{eq:Ge1pn2}
\left\{ \begin{array}{ll}
u_{t} &~ = d_{1}\Delta u +  m(x)u - u^{2} - cu^{p}v ,  \quad  0 < p \leq 1, \\[2ex]
v_{t} &~ =  d_{2}\Delta v + m(x) v -  v^{2} - bu v^{q}    ,  \quad  0 < q \leq 1.
\end{array}\right.
\end{equation}

\begin{conjecture}[Co-existence when $p=1, 0<q<1$]
\label{con:usgs1}
Consider \eqref{eq:Ge1pn2}-\eqref{eq:Ge1bh} where $\Omega \subset \mathbb{R}$, is a bounded domain, and when $b=c=p=q=1$, $d_{1} < d_{2}$. There exists positive initial data $(u_{0}(x), v_{0}(x))$, for which solutions converge to $(u^{*}(x),0)$, but solutions with the same diffusion coefficients, initiating from the same data, will converge to $(u^{*}(x), v^{*}(x))$ in finite time, for a sufficiently chosen $q \in (0,1)$.
\end{conjecture}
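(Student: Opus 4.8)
The plan is to reduce Conjecture~\ref{con:usgs1} to two facts: (i) for $p=1$ and $q$ close enough to $1$, the system \eqref{eq:Ge1pn2}--\eqref{eq:Ge1bh} possesses a \emph{bistable} structure in which both the semi-trivial state $(u^*(x),0)$ and a positive coexistence steady state $(u^*(x),v^*(x))$ are locally attracting, and (ii) the basin of the coexistence state is nonempty. Granting these, any datum chosen inside the coexistence basin serves as the claimed $(u_0,v_0)$: with $q=1$ the dynamics are those of equal kinetics with $d_1<d_2$, so the slower diffuser wins and this datum converges to $(u^*(x),0)$ by the recapped classical theorem; lowering $q$ into $(0,1)$ the \emph{same} datum is instead captured by the coexistence state. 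The underlying seed is the ODE mechanism of Theorem~\ref{thm:l2}: the sublinear term $-buv^{q}$ forces the $v$-nullcline through the origin and thereby manufactures interior equilibria that are absent classically; the task is to lift this bending of the nullcline to the spatially heterogeneous elliptic system. The restriction $\Omega\subset\mathbb{R}$ is helpful here, as it lets me treat steady states as a boundary-value problem and deploy phase-plane and shooting comparisons.

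I would first settle well-posedness and comparison for $0<q<1$. Since $v\mapsto v^{q}$ is not Lipschitz at $v=0$, I would work with the weak-solution notion of Definition~\ref{def:weak}, regularize the absorption term (e.g.\ $v^{q}\mapsto (v+\varepsilon)^{q}$ or a truncation), derive $\varepsilon$-uniform $L^\infty$ and energy bounds from the logistic structure, and pass to the limit. The competitive reaction is quasimonotone, so the regularized problems admit a comparison principle; preserving it in the limit gives a monotone semiflow in the competitive order on the invariant rectangle $0\le u\le \|m\|_\infty$, $0\le v\le\|m\|_\infty$.

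The analytic heart is the construction and stability of the coexistence steady state for heterogeneous $m(x)$. I would build an ordered sub/super-solution pair for the elliptic system, the delicate point being a positive \emph{lower} solution $\underline v$ that stays bounded away from $0$ on the bulk of $\Omega$, since the per-capita competitive loss $u\,v^{q-1}$ blows up as $v\to0$; here I would use that where $m(x)$ is large the logistic self-production dominates the competitive absorption, sustaining $v$, and take $\underline u$ from the $u$-only logistic state. Monotone iteration between this pair yields a positive steady state $(u^*,v^*)$. For its stability the singularity is harmless, because $v\mapsto v^{q}$ is smooth at $v^*>0$; I would linearize and show the principal eigenvalue is negative, using that the companion ODE coexistence equilibrium is a sink (Theorem~\ref{thm:l2}) and that the Neumann diffusion operators only shift the spectrum leftward. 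Simultaneously $(u^*(x),0)$ is locally attracting for $0<q<1$, as any small $v$ is driven down by the super-linear per-capita death rate $u\,v^{q-1}\to\infty$; together with the monotone structure this produces the bistable dichotomy with a separating saddle, whose basins I would show to be open by exhibiting a datum dominating $\underline v$.

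I expect the main obstacle to be precisely this heterogeneous coexistence construction with $d_1<d_2$: the spatially constant reduction ($m\equiv m_0$, $b=c=1$) yields only a \emph{saddle} interior equilibrium, so the stable coexistence state is genuinely a product of spatial heterogeneity and differential dispersal, and the sub/super-solution inequalities must be verified using the profile of $m(x)$ rather than pointwise ODE data, with the degenerate absorption controlled uniformly near the boundary of the support of $v^*$. Finally, I would address the phrase ``in finite time'': because $v^*>0$ on the bulk, convergence of the profile is asymptotic, but the sublinear absorption can produce a \emph{dead core}---a subregion where $v$ vanishes in finite time---so I would interpret and prove the finite-time statement as the finite-time formation of this dead core, a degenerate phenomenon consistent with Definition~\ref{def:weak}, with the persistent coexistence established asymptotically thereafter; making the threshold $q^{*}(d_1,d_2,m)$ explicit would then follow by continuity from the classical ($q=1$) configuration.
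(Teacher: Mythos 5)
First, a point of orientation: the paper does \emph{not} prove this statement. Conjecture \ref{con:usgs1} is exactly that --- a conjecture, carried over from \cite{parshad2021some} with only numerical evidence, and the paper immediately pivots away from it: its rigorous PDE result (Theorem \ref{FFTE}) is for the \emph{different} quasilinear model \eqref{eq:pde_model}, where the ``very fast'' mechanism is moved from the reaction term into the diffusion operator, and the proof there is an $L^{2}$ energy estimate combined with Gagliardo--Nirenberg--Sobolev interpolation, Young's inequality, and an ODE comparison $Y_t \le MY - \widetilde{C}Y^{\alpha}$ yielding finite-time extinction of $v$ --- a conclusion of \emph{exclusion}, not coexistence. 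So there is no proof in the paper to compare against; your proposal is an attempt at a genuinely open problem and must be judged on its own.

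Judged that way, it has real gaps. (a) Your fact (i) claims a bistable structure ``for $q$ close enough to $1$'' and you propose to extract the threshold ``by continuity from the classical ($q=1$) configuration.'' But at $q=1$ with equal kinetics and $d_1<d_2$, the semi-trivial state $(u^{*}(x),0)$ is \emph{globally} attracting and no coexistence state exists, and in the ODE seed you invoke, Theorem \ref{thm:l2} places the bistable window at $0<q<q^{*}$ with \emph{no} interior equilibrium for $q^{*}<q<1$. A continuity or perturbation argument from $q=1$ therefore produces nothing; the coexistence state must be born in a saddle-node bifurcation at some interior value of $q$, and proving that such a bifurcation occurs for the heterogeneous system with $d_1<d_2$ is precisely the content of the conjecture, which your sketch defers rather than settles. (b) Your stability argument is internally inconsistent: you lean on the ODE coexistence equilibrium being a sink (Theorem \ref{thm:l2}), yet you concede that in the normalized reduction relevant here ($b=c=1$, where the hypotheses of Theorem \ref{thm:l2} fail) the interior equilibrium is only a saddle; and the claim that the Neumann diffusion operators ``only shift the spectrum leftward'' is unjustified for the linearization about a nonconstant steady state of a heterogeneous system --- it is an assertion, not an estimate. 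So existence \emph{and} stability of $(u^{*},v^{*})$, the crux, are assumed. (c) Finally, you quietly replace the conclusion ``converges to $(u^{*}(x),v^{*}(x))$ in finite time'' by ``finite-time dead-core formation followed by asymptotic convergence.'' That is a reasonable reading of a puzzling phrase, but it is a different statement, and a dead core on which $v$ vanishes is in tension with convergence to a coexistence profile that is positive on the bulk. In short: the well-posedness/regularization layer of your plan is sound and standard, but the two load-bearing steps --- birth of a stable heterogeneous coexistence state at some $q^{*}\in(0,1)$, and the finite-time nature of the capture --- remain open, so the proposal is a program, not a proof.
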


\subsection{Movement Operator}

Consider a species $v$ dispersing over a spatial domain $\Omega$. Its dynamics is typically governed by a diffusion equation,
$v_{t} = \Delta v + f(v)$, where $\Delta v$ represents movement by diffusion, and the other dynamics such as growth, death, competition, depredation are embedded in $f(v)$. Thus one can define a movement operator $L: H^{2}(\Omega) \mapsto L^{2}(\Omega)$, where $ \mathcal{L}(v) = \Delta v$.
We take the following approach to modeling movement. Define, 
\begin{eqnarray*}
\mathcal{L}^{1}(v)  
&=& \mathcal{L}^{1}((1-k)v + k(v)), \ 0<k<1 \  \nonumber \\
&=& (1-k)L(v) + kL^{*}(v) \nonumber \\
&=& \Delta ((1-k)v) + L^{*}(k v),  \nonumber \\
\end{eqnarray*}
 where $\boxed{L^{*}(v) \approx -v^{q}}$, thus the operator $L^{*}(v)$ will play the role of the sub-linear harvesting term, so as to cause finite time extinction. However, this is only affected by a (small) fraction of the population via the fraction $k \in (0,1)$.

\begin{remark}
Thus the movement operator $\mathcal{L}^{1}$, provides a way to formalize the action via which we have a fraction of the population moving via regular diffusion, via the other fraction moving via fast diffusion.
\end{remark}

This leads to the following quasi-linear PDE, representing the interaction of two competing species, with spatially dependent resource function $m(x)(\in L^{\infty}(\Omega))$ and population densities $u(x, t)$ and $v(x, t)$

\begin{equation}
\label{eq:pde_model}
\left\{ \begin{array}{ll}
u_t &= d_1 \nabla \cdot (\nabla u) +  u\Big(m(x)-u-v\Big), \quad x\in \Omega, t>0\\
		v_t &= \nabla \cdot a(x,v,\nabla v)  + v\Big(m(x)-u-v\Big), \quad x\in\Omega,t>0, \\
	 a(x,v,\nabla v) & = \Big( d_2 (1-k)\nabla v  + k |\nabla v|^{p-2}  \nabla v \Big) , \ p \in (1,2], \ 0 \leq k \leq 1 \\
 \nabla u \cdot \eta &=a \cdot \eta=0, \quad x\in \partial \Omega \\
		u(x,0)&=u_0(x), v(x,0)=v_{0} (x), \quad x\in \Omega \\
\end{array}\right.
\end{equation}

where all the parameters $d_i (i=1,2)$ and $k$ are positive and $\Omega \subset \mathbb{R}^n$ is a bounded domain with smooth boundary.


\begin{lemma}
Consider the system \eqref{eq:pde_model}. Then $\nabla a(x,v,\nabla v) \cdot \eta  \iff \nabla v \cdot \eta = 0$.
\end{lemma}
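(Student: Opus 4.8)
The plan is to exploit the fact that both constituent fluxes making up $a(x,v,\nabla v)$ are scalar multiples of the \emph{same} vector $\nabla v$, so that the boundary flux $a\cdot\eta$ collapses to a single nonnegative scalar times the normal derivative $\nabla v\cdot\eta$; the claimed equivalence $a(x,v,\nabla v)\cdot\eta = 0 \iff \nabla v\cdot\eta = 0$ then follows as soon as that scalar is shown to be strictly positive. First I would compute, directly from the definition $a(x,v,\nabla v) = d_2(1-k)\nabla v + k|\nabla v|^{p-2}\nabla v$, that
\[
a(x,v,\nabla v)\cdot\eta = \Big( d_2(1-k) + k|\nabla v|^{p-2} \Big)\,(\nabla v\cdot\eta),
\]
an identity valid at every boundary point where $\nabla v \neq 0$. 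Writing $\lambda(\nabla v) := d_2(1-k) + k|\nabla v|^{p-2}$ for the bracketed coefficient, the lemma reduces to establishing that $\lambda$ never vanishes, so that $a\cdot\eta$ and $\nabla v\cdot\eta$ vanish simultaneously.

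The second step is the positivity of $\lambda$, which I would dispatch by a short case split on $k$. For $0 \le k < 1$ the first summand satisfies $d_2(1-k) > 0$ while $k|\nabla v|^{p-2} \ge 0$, so $\lambda(\nabla v) \ge d_2(1-k) > 0$ and division by $\lambda$ is legitimate, yielding both implications at once. The only delicate range is $k = 1$, where $\lambda = |\nabla v|^{p-2}$ and the hypothesis $p \in (1,2]$ forces the exponent $p-2 \in (-1,0]$; here $\lambda$ remains strictly positive for $\nabla v \neq 0$ but may blow up as $|\nabla v| \to 0$, so one cannot claim a uniform lower bound, only a pointwise nonvanishing one, which is all the equivalence needs.

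The main obstacle, and precisely the point at which the degeneracy theory underlying Definition~\ref{def:weak} enters, is the behavior of the $p$-Laplacian-type term at boundary points where $\nabla v = 0$, since $|\nabla v|^{p-2}$ is undefined there for $p < 2$ and the factored identity above does not literally apply. I would resolve this by adopting the standard convention $|\nabla v|^{p-2}\nabla v := 0$ when $\nabla v = 0$, justified because $|s|^{p-2}s \to 0$ as $s \to 0$ for every $p > 1$; under this convention $a = 0$ at such points, so $a\cdot\eta = 0$ holds, while $\nabla v\cdot\eta = 0$ holds trivially, and the equivalence is preserved at these degenerate points as well. Assembling the two steps gives $a(x,v,\nabla v)\cdot\eta = 0 \iff \nabla v\cdot\eta = 0$ on all of $\partial\Omega$, which shows that the quasilinear no-flux condition $a\cdot\eta = 0$ imposed in \eqref{eq:pde_model} is nothing other than the homogeneous Neumann condition for $v$.
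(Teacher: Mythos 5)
Your proposal is correct and follows essentially the same route as the paper: factor the boundary flux as $a\cdot\eta = \bigl(d_2(1-k) + k|\nabla v|^{p-2}\bigr)(\nabla v\cdot\eta)$ and invoke positivity of the bracketed coefficient to conclude the equivalence. Your additional care at the degenerate points (the case $k=1$ and the convention $|\nabla v|^{p-2}\nabla v := 0$ when $\nabla v = 0$, where $p<2$ makes the coefficient singular) is a refinement the paper silently glosses over, but the core argument is identical.
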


Consider the boundary conditions for $v$,
\[ a \cdot \eta = \Big( d_2 (1-k) \nabla v + k |\nabla v|^{p-2} \cdot \nabla v \Big) \cdot \eta \hspace{.05in} =0 \iff \nabla v \cdot \eta \Big(  d_2 (1-k) + k |\nabla v|^{p-2} \Big)=0.\]
By the positivity of the bracket term, we can consider the Neumann boundary conditions for $v$. Consider the system \eqref{eq:pde_model} updated boundary conditions:
\begin{align}\label{pde_model_bc}
	\begin{split}
		 \nabla u \cdot \eta &=\nabla v \cdot \eta=0, \quad x\in \partial \Omega.
	\end{split}
\end{align}

\begin{theorem}\label{classic_pde}
Consider \eqref{eq:pde_model} in a bounded domain with smooth boundary $\Omega \subset \mathbb{R}$ with $p=2$ and $d_2(1-k) + k < d_1$. Then for any choice of positive initial data $(u_0(x),v_0(x))$, the solution $(u,v)$ converges uniformly to $(0,v^{*})$ as $t\to \infty.$
\end{theorem}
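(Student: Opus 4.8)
The plan is to exploit the fact that the exponent choice $p=2$ \emph{linearizes} the quasi-linear movement operator, collapsing \eqref{eq:pde_model} onto a classical two-species competition-diffusion system to which the recapped ``slower diffuser wins'' theorem applies directly. First I would observe that at $p=2$ the degenerate term disappears, since $|\nabla v|^{p-2}\nabla v = |\nabla v|^{0}\nabla v = \nabla v$, so that
\begin{equation*}
a(x,v,\nabla v) = d_2(1-k)\nabla v + k\nabla v = \big(d_2(1-k)+k\big)\nabla v.
\end{equation*}
Writing $\tilde d_2 := d_2(1-k)+k$, the $v$-equation becomes $v_t = \tilde d_2 \Delta v + v(m(x)-u-v)$, a genuine non-degenerate linear diffusion. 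The full system then reads
\begin{equation*}
u_t = d_1\Delta u + u\big(m(x)-u-v\big), \qquad v_t = \tilde d_2\Delta v + v\big(m(x)-u-v\big),
\end{equation*}
under the homogeneous Neumann conditions \eqref{pde_model_bc} with positive initial data, which is precisely the normalized classical model \eqref{eq:Ge1pn}-\eqref{eq:Ge1bh} with equal kinetics $b=c=1$ and effective diffusion rates $d_1$ for $u$ and $\tilde d_2$ for $v$.

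Because $p=2$ removes all degeneracy, well-posedness is routine: standard semilinear parabolic theory furnishes a unique positive global classical solution, and an invariant-rectangle argument (comparing each component against the decoupled logistic equation $w_t = d\,\Delta w + w(\|m\|_{\infty}-w)$) yields the uniform a priori bounds $0 < u,v \leq \|m\|_{\infty}$ needed for the compactness underlying convergence. The second step is then a relabeling. The hypothesis $d_2(1-k)+k < d_1$ is exactly $\tilde d_2 < d_1$, so in the reduced \emph{symmetric} system it is $v$ that is the slower diffuser. Applying the Dockery-Hastings result (the seminal theorem recapped just before Conjecture \ref{con:usgs1}, \cite{Dockery1998, Hastings}) with the roles of $u$ and $v$ interchanged, the slower diffuser $v$ competitively excludes $u$; hence every positive solution converges uniformly to $(0,v^{*})$, where $v^{*}(x)$ is the unique positive steady state of the single-species problem $\tilde d_2\Delta v + v(m(x)-v)=0$ under Neumann conditions.

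I anticipate the only point requiring care — rather than a serious obstacle — is justifying that the symmetry $u \leftrightarrow v$ is legitimate, i.e.\ that the cited theorem truly transfers after relabeling. This is where the normalized equal-kinetics structure is essential: both reaction terms are identically $m(x)-u-v$, so the system is invariant under the simultaneous swap $(u,d_1)\leftrightarrow(v,\tilde d_2)$, and the classical conclusion carries over verbatim with the smaller-diffusion species as the winner. The existence, uniqueness, and global attractivity of $v^{*}$ within the single-species dynamics is itself part of that classical framework, following from $m\geq 0$, $m\not\equiv 0$, and the positivity of the principal eigenvalue of $-\tilde d_2\Delta - m$ under Neumann conditions. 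Notably, no degeneracy or finite-time-extinction phenomena enter at all, which is exactly why the \emph{critical} value $p=2$ (as opposed to $p\in(1,2)$) is imposed in this particular statement.
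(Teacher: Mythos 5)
Your proposal is correct and follows exactly the route the paper intends: the paper's entire proof is the remark that ``the proof follows via standard techniques and can be found in \cite{dockery1998evolution, ni2011mathematics}'', i.e.\ precisely your reduction, at $p=2$, of the quasi-linear operator to linear diffusion with effective coefficient $\tilde d_2 = d_2(1-k)+k$, followed by the Dockery--Hutson--Mischaikow--Pernarowski slower-diffuser-wins theorem with the roles of $u$ and $v$ swapped. Your write-up is in fact more complete than the paper's, since you make the relabeling symmetry and the a priori bounds explicit rather than leaving them to the citation.
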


The proof follows via standard techniques and can be found in \cite{dockery1998evolution, ni2011mathematics}.

\begin{definition}[Weak solution]\label{def:weak}
A measurable function $v$ is a local weak sub (super) solution of \eqref{eq:pde_model} in $\Omega_{T}$ if 

\begin{equation}
v \in C_{loc}(0,T;L^{2}_{loc}(\Omega) \cap L^{p}_{loc}(0,T;W^{1,p}_{loc}(\Omega) )
\end{equation}
and for every compact subset $K$ of $\Omega$ and for every sub interval $[t_{1},t_{2}]$ of $(0,T]$

\begin{equation}
\int_{K} v \phi dx \vert^{t_{2}}_{t_{1}}  + \int^{t_{2}}_{t_{1}} \int_{K} \left(   -v \phi_{t}   +  a(x,v,\nabla v) \cdot \nabla \phi    \right) dx d \tau\geq (\leq)  \int^{t_{2}}_{t_{1}} \int_{K}  b(x,\tau,v) \phi dx d \tau
\end{equation}
For all test functions $\phi \in W^{1,2}_{loc}(0,T;L^{2}(K) \cap L^{p}_{loc}(0,T;W^{1,p}_{0}(K) ), \ \phi \geq 0$. 
\end{definition}

\begin{lemma}[Gagliardo–Nirenberg-Sobolev interpolation inequality (GNS)]
Consider functions $\phi \in W^{m,q} (\Omega)$. Then the following interpolation inequality holds,
\begin{equation}\label{GNS}
    ||\phi||_{W^{k,p'} (\Omega)} \le C ||\phi||_{W^{m,q'} (\Omega)}^{\theta} ||\phi||_{\mathcal{L}^q (\Omega)}^{1-\theta},
\end{equation}
for $p',q',q\ge 1, \theta \in [0,1]$ as long as the following is satisfied,
\[ k-\dfrac{n}{p'} \le \theta \Big( m - \dfrac{n}{q'} + \dfrac{n}{q} \Big) -\dfrac{n}{q}. \]

\end{lemma}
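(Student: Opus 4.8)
The plan is to reduce to the whole space, establish the base Sobolev embedding by the classical Gagliardo slicing argument, and then recover the full range of exponents by a scaling argument together with an iterated interpolation across derivative orders.

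First I would invoke the extension theorem. Since $\Omega$ is bounded with smooth boundary, there is a bounded linear extension operator $E\colon W^{m,q'}(\Omega)\to W^{m,q'}(\mathbb{R}^n)$ that extends boundedly on $W^{k,p'}$ and $L^{q}$ simultaneously; applying the inequality to $E\phi$ and restricting back to $\Omega$ reduces the claim to the case $\Omega=\mathbb{R}^n$, up to constants. On the bounded domain I would also record that the full Sobolev norm is equivalent to the seminorm $||D^{m}\phi||_{L^{q'}}$ plus $||\phi||_{L^{q}}$, so that it suffices to prove the homogeneous seminorm version and recombine the lower-order contributions at the end using the finiteness of $|\Omega|$ and H\"older's inequality.

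The base case is the Gagliardo–Nirenberg–Sobolev embedding $||\phi||_{L^{n/(n-1)}(\mathbb{R}^n)}\le \prod_{i=1}^{n}||\partial_i\phi||_{L^{1}}^{1/n}\le ||\nabla\phi||_{L^{1}}$, which I would prove for $\phi\in C_c^{\infty}$ by writing $\phi(x)=\int_{-\infty}^{x_i}\partial_i\phi\,dt$ along each axis, bounding $|\phi|$ by the product of the $n$ one-dimensional integrals, and integrating the product via the iterated (Loomis–Whitney) form of H\"older's inequality. Applying this estimate to $|\phi|^{\gamma}$ for a suitable $\gamma>1$ and using H\"older then upgrades it to $||\phi||_{L^{p^{\ast}}}\le C||\nabla\phi||_{L^{q}}$ with $1/p^{\ast}=1/q-1/n$. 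With this in hand, the full statement follows by homogeneity and interpolation: the substitution $\phi_\lambda(x)=\phi(\lambda x)$ makes each term in \eqref{GNS} a fixed power of $\lambda$, and matching the powers yields precisely the dimensional balance $k-n/p'=\theta(m-n/q'+n/q)-n/q$, so that under equality the inequality is scale-invariant and need only be verified at one scale, while the mere inequality version of the constraint is accommodated on the bounded domain by interpolating the surplus integrability against $|\Omega|$ through H\"older and Young's inequalities. To reach intermediate derivative orders $0<k<m$, I would chain the one-dimensional interpolation $||D^{j}\phi||_{L^{q'}}\lesssim||D^{j-1}\phi||_{L^{q'}}^{1/2}||D^{j+1}\phi||_{L^{q'}}^{1/2}$ (itself an integration-by-parts estimate) from order $0$ up to order $m$, and combine it with the Sobolev embedding, distributing the exponents via Young's inequality so that the $W^{m,q'}$ factor carries weight $\theta$ and the $L^{q}$ factor weight $1-\theta$.

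The main obstacle is the bookkeeping over the full admissible parameter range under the \emph{inequality} in the dimensional condition: one must track how the gap between the two sides is absorbed by the finite measure of $\Omega$ and confirm that every borderline exponent choice (for instance $p^{\ast}=\infty$, or $\theta\in\{0,1\}$) is covered, rather than only the scale-critical equality case. A secondary technical point is the rigor of the $|\phi|^{\gamma}$ substitution and of the derivative-order iteration, which I would justify by first proving the estimate for $\phi\in C_c^{\infty}(\mathbb{R}^n)$ and then passing to general $\phi\in W^{m,q'}$ by density.
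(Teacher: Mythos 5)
The paper offers no proof of this lemma at all: it is stated as a recap of the classical Gagliardo--Nirenberg interpolation inequality and is invoked as a black box in the proof of Theorem \ref{FFTE}, so there is no in-paper argument to compare yours against. Your sketch is the standard textbook route to this classical result --- extension to $\mathbb{R}^n$ by a (Stein-type, simultaneously bounded) extension operator, the slicing/Loomis--Whitney argument giving $\|\phi\|_{L^{n/(n-1)}}\le\|\nabla\phi\|_{L^{1}}$, the $|\phi|^{\gamma}$ substitution to reach $\|\phi\|_{L^{p^{*}}}\le C\|\nabla\phi\|_{L^{q}}$, and absorption of the non-critical (strict inequality) cases on a bounded domain via H\"older --- and these are exactly the right ingredients. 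Two cautions on the parts you compress. First, the scaling computation only certifies that the dimensional balance is the consistent relation between exponents; it does not by itself yield the inequality in the scale-critical case, and the equal-exponent chaining $\|D^{j}\phi\|_{L^{q'}}\lesssim\|D^{j-1}\phi\|_{L^{q'}}^{1/2}\|D^{j+1}\phi\|_{L^{q'}}^{1/2}$ is not sufficient to produce the mixed-norm interpolation between $\|D^{m}\phi\|_{L^{q'}}$ and $\|\phi\|_{L^{q}}$; the genuine work is Nirenberg's one-dimensional interpolation lemma with \emph{different} exponents on the two factors, proved on lines and then integrated and combined across coordinate directions --- this is precisely where your acknowledged ``bookkeeping'' obstacle lives. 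Second, the inequality as stated (both by you and by the paper) needs the restriction $\theta\ge k/m$, and there is a known exceptional case at $\theta=1$ when $m-k-n/q'$ is a nonnegative integer; this omission is harmless for the paper's application, since there $k=0$ and $\theta<1$, but a complete proof should record it. With those caveats, your proposal is a sound reconstruction of the classical argument that the paper simply cites.
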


We next present the main result of this section.

\begin{theorem}
Consider the system \eqref{eq:pde_model}. Then there exists initial data $(u_{0},v_{0}) \in W^{1,2} (\Omega)$ for which there exist local weak solutions to \eqref{eq:pde_model}.
\end{theorem}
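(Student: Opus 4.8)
The plan is to construct local weak solutions by a Galerkin approximation scheme, to close the energy estimates using the Gagliardo--Nirenberg--Sobolev inequality \eqref{GNS}, and then to pass to the limit in the nonlinear flux $a(x,v,\nabla v)$ via its monotonicity. First I would record the structural properties of the operator $a$: for $p\in(1,2]$ and $d_2(1-k)>0$, the map $\xi\mapsto d_2(1-k)\xi + k|\xi|^{p-2}\xi$ is monotone, coercive, and of controlled growth, so that the $v$-diffusion fits the function spaces appearing in Definition \ref{def:weak}. The linear part $d_2(1-k)\nabla v$ yields control of $\nabla v$ in $L^2$, the $p$-Laplacian part yields control in $L^p$, and together these match exactly the regularity $v\in L^2_{loc}(0,T;W^{1,2})\cap L^p_{loc}(0,T;W^{1,p})$ required of a weak solution, since $a(x,v,\nabla v)\cdot\nabla v = d_2(1-k)|\nabla v|^2 + k|\nabla v|^p\ge 0$.

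Next I would set up the approximations. Let $\{\psi_j\}$ be the $L^2(\Omega)$-orthonormal eigenfunctions of $-\Delta$ with homogeneous Neumann data, and seek $u_n(t)=\sum_{j\le n}\alpha_j(t)\psi_j$ and $v_n(t)=\sum_{j\le n}\beta_j(t)\psi_j$ solving the Galerkin projection of \eqref{eq:pde_model}. Since the projected nonlinearities depend continuously on the coefficients, Carath\'eodory/Picard theory gives a local-in-time solution of the resulting ODE system on an interval $[0,T_n)$.

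The core of the argument is the a priori estimate. Testing the $u$-equation with $u_n$ and the $v$-equation with $v_n$, then integrating, gives
\begin{equation*}
\tfrac{1}{2}\tfrac{d}{dt}\big(\|u_n\|_{L^2}^2+\|v_n\|_{L^2}^2\big) + d_1\|\nabla u_n\|_{L^2}^2 + d_2(1-k)\|\nabla v_n\|_{L^2}^2 + k\|\nabla v_n\|_{L^p}^p = \mathcal{R}_n,
\end{equation*}
where $\mathcal{R}_n = \int_\Omega m\,(u_n^2+v_n^2)\,dx - \int_\Omega (u_n+v_n)(u_n^2+v_n^2)\,dx$ collects the reaction contributions. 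The cubic terms, such as $\|v_n\|_{L^3}^3$ and $\int_\Omega u_n v_n^2\,dx$, are the delicate ones; I would bound them through the interpolation inequality \eqref{GNS}, e.g. $\|v_n\|_{L^3}\le C\|v_n\|_{W^{1,2}}^{\theta}\|v_n\|_{L^2}^{1-\theta}$ with $\theta$ admissible in the given space dimension (so that $3\theta<2$, valid for $n\le 3$), and then invoke Young's inequality to absorb the gradient factor into the dissipative terms on the left. What remains is a differential inequality of the form $\tfrac{d}{dt}E_n\le C(E_n+E_n^{\gamma})$ with $E_n=\|u_n\|_{L^2}^2+\|v_n\|_{L^2}^2$ and $\gamma>1$, so a Gronwall/comparison argument yields uniform bounds on a common interval $[0,T^\ast)$ with $T^\ast>0$ depending only on the size of the data. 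This is precisely where only a \emph{local} in time, data-dependent conclusion can be drawn, matching the statement. From these bounds I would also deduce uniform estimates for $\partial_t u_n$ in $L^2(0,T^\ast;(W^{1,2})^\ast)$ and for $\partial_t v_n$ in the dual of $L^2(0,T^\ast;W^{1,2})\cap L^p(0,T^\ast;W^{1,p})$, using the growth bound on $a$.

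Finally I would pass to the limit. The uniform bounds and the Aubin--Lions--Simon lemma furnish a subsequence with $u_n\to u$ and $v_n\to v$ strongly in $L^2(\Omega_{T^\ast})$ and a.e., $\nabla v_n\rightharpoonup\nabla v$ weakly in $L^2\cap L^p$, and $a(x,v_n,\nabla v_n)\rightharpoonup\chi$ weakly. Strong convergence disposes of the reaction terms. The principal obstacle is identifying $\chi=a(x,v,\nabla v)$, since $a$ is nonlinear in $\nabla v$ and weak gradient convergence alone does not commute with it. I would resolve this by the Minty--Browder monotonicity trick: from $\langle a(x,v_n,\nabla v_n)-a(x,v_n,\nabla w),\,\nabla v_n-\nabla w\rangle\ge 0$, together with the energy identity and the strong $L^2$ convergence of $v_n$, one passes to the limit and then varies $w$ to conclude $\chi=a(x,v,\nabla v)$. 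The limit pair $(u,v)$ then satisfies the weak formulation of Definition \ref{def:weak} on $[0,T^\ast)$, which is the desired local weak solution for the data whose size determines $T^\ast$.
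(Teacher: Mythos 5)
Your proposal cannot be matched against the paper's argument for the simple reason that the paper does not give one: this theorem is stated with no proof at all (the only proof in that section is the energy/extinction argument for Theorem \ref{FFTE}, which presupposes a solution exists). You have, in effect, supplied the missing proof, and your route is the correct standard one: Galerkin approximation in a Neumann eigenbasis, energy estimates closed by the GNS interpolation inequality \eqref{GNS} plus Young's inequality, Aubin--Lions compactness, and the Minty--Browder trick to identify the weak limit of the monotone flux $a(x,v,\nabla v)$. Indeed your argument proves more than is claimed, namely existence of local weak solutions for \emph{every} $(u_0,v_0)\in W^{1,2}(\Omega)$, not merely for some choice of data. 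A few points deserve care if you write it out in full. First, the Galerkin approximants $u_n,v_n$ need not remain nonnegative, so the cubic reaction terms cannot be discarded by a sign argument; your interpolation-plus-absorption treatment is exactly what is needed, together with the dimension restriction $3\theta<2$ (harmless here, since the paper's PDE results take $\Omega\subset\mathbb{R}$). Second, your compactness and time-derivative estimates implicitly use $d_2(1-k)>0$: it is the uniformly elliptic linear part that puts you in the $L^2(0,T;W^{1,2})$ scale, where the spectral projections $P_n$ are uniformly bounded and Aubin--Lions applies cleanly; in the edge case $k=1$ permitted by \eqref{eq:pde_model} the $W^{1,2}$ control disappears and one must work entirely in the $W^{1,p}$ scale, where boundedness of $P_n$ is no longer free. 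Third, the Minty step needs the limsup inequality $\limsup_n \int a(x,\nabla v_n)\cdot\nabla v_n \le \int \chi\cdot\nabla v$, which you should derive from the limit equation and weak lower semicontinuity, keeping track of the mixed growth of the flux: $a(\nabla v_n)$ is bounded in the sum space $L^{2}+L^{p'}$, with the linear and $p$-Laplacian parts handled separately. With those details filled in, your proof is complete and stands on its own.
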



\begin{theorem}\label{FFTE}
Consider the spatially explicit competition model \eqref{eq:pde_model}-\eqref{pde_model_bc} in a bounded domain with smooth boundary $\Omega \subset \mathbb{R}$. There exists some positive initial data $(u_0(x),v_0(x))$ such that for $d_2(1-k) + k < d_1$ and $p=2$, the solution $(u,v) \to (0,v^{*}),$ but for some $p \in(1,2]$ and $k\in (0,1)$, $ (u,v) \to (u^{*},0)$ in $L^{2}(\Omega)$, starting from the same initial data. 
\end{theorem}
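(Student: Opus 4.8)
The plan is to treat the two regimes separately, since the statement asserts a contrast between the classical ($p=2$) outcome and the fast-diffusion ($p\in(1,2)$) outcome for one fixed choice of positive data $(u_0,v_0)$. The first regime I would dispatch by reduction: when $p=2$ the flux collapses, $a(x,v,\nabla v)=\big(d_2(1-k)+k\big)\nabla v=:\tilde d_2\,\nabla v$, so \eqref{eq:pde_model}-\eqref{pde_model_bc} becomes the classical diffusive competition system with rates $d_1$ for $u$ and $\tilde d_2$ for $v$. The hypothesis $d_2(1-k)+k<d_1$ is precisely $\tilde d_2<d_1$, so $v$ is the slower diffuser and Theorem \ref{classic_pde} applies verbatim, giving $(u,v)\to(0,v^{*})$ uniformly for \emph{every} positive datum. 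Since this holds for whatever datum I select below, the first half of the statement needs no further work, and the whole burden falls on the $p\in(1,2)$ regime.

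The substance is to show that the singular ($1<p<2$) $p$-Laplacian flux forces $v$ to vanish, after which $u$ relaxes to $u^{*}$. Testing the $v$-equation with $v$ and using the Neumann condition \eqref{pde_model_bc} yields
\[ \tfrac12\tfrac{d}{dt}\|v\|_{L^2}^2 = -d_2(1-k)\|\nabla v\|_{L^2}^2 - k\|\nabla v\|_{L^p}^p + \int_\Omega v^2\big(m(x)-u-v\big)\,dx. \]
Discarding the nonpositive linear-diffusion term and bounding the reaction via $\int_\Omega v^2(m-u-v)\le \|m\|_{\infty}\|v\|_{L^2}^2$ (using $u,v\ge 0$) leaves $\tfrac12\tfrac{d}{dt}\|v\|_{L^2}^2 \le -k\|\nabla v\|_{L^p}^p + \|m\|_{\infty}\|v\|_{L^2}^2$. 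I would then invoke the GNS inequality \eqref{GNS} on the one-dimensional domain $\Omega$ to bound the dissipation below by a power of the $L^2$ norm, of the form $\|\nabla v\|_{L^p}^p \gtrsim \|v\|_{L^2}^{p}$, producing the scalar differential inequality
\[ y' \le -C\,y^{p/2} + 2\|m\|_{\infty}\,y, \qquad y:=\|v\|_{L^2}^2. \]
Because $p\in(1,2)$ forces $p/2<1$, the dissipative term \emph{dominates} the linear growth term for small $y$; thus if $\|v_0\|_{L^2}$ is chosen small enough that $y(0)$ lies below the threshold $\big(C/(4\|m\|_\infty)\big)^{2/(2-p)}$, then $y'\le -\tfrac{C}{2}y^{p/2}$ and $y$ stays below the threshold thereafter, so $y(t)\equiv 0$ for $t\ge T^{*}$ with $T^{*}\lesssim y(0)^{1-p/2}$. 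This is the finite-time extinction of $v$, the direct PDE analogue of the sublinear finite-time extinction mechanism of the earlier ODE models.

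Once $v\equiv 0$ on $[T^{*},\infty)$, the $u$-equation reduces to the scalar logistic-diffusion (Fisher--KPP type) problem $u_t=d_1\Delta u + u\big(m(x)-u\big)$ with Neumann data $u(\cdot,T^{*})>0$, whose unique positive steady state $u^{*}$ is the global attractor for positive data; hence $u\to u^{*}$ and $(u,v)\to(u^{*},0)$ in $L^2(\Omega)$, closing the case. The main obstacle I anticipate is precisely the lower bound $\|\nabla v\|_{L^p}^p\gtrsim\|v\|_{L^2}^{p}$: under Neumann conditions the constant modes have vanishing $\|\nabla v\|_{L^p}$, so the singular $p$-Laplacian does \emph{not} extinguish on its own (it conserves the mean), and GNS/Poincar\'e cannot supply the estimate for near-constant profiles. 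Resolving this requires exploiting the competition structure—the genuinely negative contribution of $m-u-v$ in the region where $u$ has grown large—to control the mean mode, combined with the choice of a small, non-constant admissible datum that keeps the gradient active. Making these two mechanisms (dissipation of the oscillatory part, reaction-driven decay of the mean) cooperate so that the $-Cy^{p/2}$ term genuinely governs the evolution for all $t$ is the crux, and is what pins down the admissible class of initial data and the admissible pair $(p,k)$.
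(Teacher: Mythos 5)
Your proposal follows essentially the same route as the paper's own proof: test the $v$-equation against $v$, use the Neumann conditions, bound the reaction by $\|m\|_{\infty}\|v\|_2^2$, convert the $p$-Laplacian dissipation into a sublinear power of the $L^2$ norm via Gagliardo--Nirenberg, and close with the scalar inequality $y' \le My - Cy^{p/2}$, whose sublinear absorption term ($p/2<1$) forces finite-time extinction of $\|v\|_2$ for small data. The paper runs the identical ODE in the variable $Y=\|v\|_2$ with $\alpha = p/2$ and the substitution $U = 1/Y$, so the core mechanism is the same. The differences are minor: the paper derives a comparison bound $u \ge C_1 e^{-Ct}$ that it never substantially uses, and it routes the interpolation through an auxiliary exponent $\widehat{p}$ plus Young's inequality, whereas you assert the dissipation lower bound directly; on the other hand you supply two steps the paper omits entirely, namely the reduction of the $p=2$ half to Theorem \ref{classic_pde} and the final relaxation $u \to u^{*}$ via scalar logistic-diffusion theory once $v$ has died.

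The point deserving emphasis is the obstacle you flag as the crux: under Neumann conditions the bound $\|v\|_2^2 \le C\|v_x\|_p^p$ is false for constant and near-constant profiles, since the right-hand side vanishes while the left does not, so no Poincar\'e or GNS argument alone can deliver it. You should be aware that the paper does not resolve this either: its proof asserts exactly this inequality from the Rellich--Kondrachov embedding $W^{1,\widetilde{p}}(\Omega) \hookrightarrow L^2(\Omega)$, but that embedding controls $\|v\|_2$ by the full $W^{1,\widetilde{p}}$ norm (gradient \emph{plus} the function itself), not by the gradient alone, so the step is invalid as written --- for precisely the reason you give. In short, your proposal reproduces the paper's argument, is more candid about its weakest link, and correctly identifies that a complete proof needs an additional mechanism (e.g., using the reaction term or the growth of $u$ to control the mean mode of $v$) which neither your sketch nor the paper actually supplies.
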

\begin{proof}
Consider the system $(\ref{eq:pde_model})$ for $u$. Since the spatially dependent resource function $m(x)( \in \mathcal{L}({\infty}))$ serves as upper-bound (upto some constant $C$) for population density $v(x,t)$, we get
\[ u_t \ge d_1 u_{xx} - u^2-C||m||_{\infty}u.\]
Moreover, on comparison with the standard logistic equation and using the comparison argument, we have $u\ge C_1 e^{-Ct}$ for some constant $C_1$ (depends on $u_0$) and $C$ \cite{Cantrell2003}.

Let's test the PDE $(\ref{eq:pde_model})$ for $v$ against $v$
\[ \int_{\Omega}v v_t =\int_{\Omega}\left( d_2 (1-k) v v_{xx}  + m v^2 -v^2u-v^3 + k v \dfrac{\partial}{\partial x} \Big(|v_x|^{p-2} \cdot v_x \Big) \right).\]
On integrating it on the full domain $\Omega$ and using the homogeneous Neumann boundary conditions $(\ref{pde_model_bc})$, we get
\[ \dfrac{1}{2} \dfrac{d}{dt} ||v||_2^2 + d_2(1-k) ||v_x||_2^2 + C_1 e^{-Ct} ||v||_2^2 + \int_\Omega v^3 + k ||v_x||_p^p \le \int_{\Omega} m v^2 .\]
On using the positivity of $v$ and the fact $m\in \mathcal{L}^{\infty} (\Omega)$, we have
\[ \dfrac{1}{2} \dfrac{d}{dt} ||v||_2^2 + d_2(1-k) ||v_x||_2^2 + k ||v_x||_p^p \le M ||v||_2^2,\]
where $M=||m||_{\infty}.$
Recall the Rellich–Kondrachov theorem \cite{sell2013}, we know 
\[ W^{1,\widetilde{p}} (\Omega) \dhookrightarrow L^{2} (\Omega) \quad \text{if} \quad \frac{n}{\widetilde{p}}-1<\frac{n}{2}.\] Hence, we have
\[ ||v||_2^{2} \le C ||v_x||^p_{p}\]
provided $ p>\frac{2}{3}.$
As $p\in(1,2]$, this embedding hold trivially true. Let us introduce a new parameter $\widehat{p}$ such that $1<\widehat{p}<p$.
Hence, we have
\[ \dfrac{1}{2} \dfrac{d}{dt} ||v||_2^2 + \widetilde{C} \Big\{||v_x||_{\widehat{p}}^{\widehat{p}} + ||v||_p^p\Big\} \le M ||v||_2^2.,\]
where $\widetilde{C}=\min \{d_2 (1-k),\frac{C}{k}\}.$

We will control the term in the braces by using the GNS inequality. Let's compare and note down the corresponding spaces
\[ n=1, \quad  k=0, \quad p'=2, \quad m=1, \quad q'=\widehat{p}, \quad q=p.\]
\begin{equation}\label{GNS_pde}
    ||v||_2 \le C ||v_x||^{\theta}_{\widehat{p}} ||v||^{1-\theta}_{p}
\end{equation}
such that
\[ -\dfrac{1}{2} \le \theta \Big( 1 - \dfrac{1}{\widehat{p}} + \dfrac{1}{p} \Big) - \dfrac{1}{p}\]
On further rearrangement, we have
\begin{equation}\label{GNS_est}
    \theta \ge \dfrac{(2-p)\widehat{p}}{2(\widehat{p}p+\widehat{p}-p).}
\end{equation}
Let's raise the both sides of $(\ref{GNS_pde})$ by $l$, where $l \in (0,2)$
\[ \Big( \int_\Omega v^2 \Big)^{\frac{l}{2}} \le C \Big( \int_\Omega |v_x|^{\widehat{p}} \Big)^{\frac{l \theta}{\widehat{p}}} \Big( \int_\Omega |v|^p \Big)^{\frac{l(1-\theta)}{p}}. \]

Recall the Young's inequality \cite{evans},
\[ ab \le \dfrac{a^r}{r} + \dfrac{b^s}{s}, \]
such that $\frac{1}{r} + \frac{1}{s}=1.$ 
Let's use the Young's inequality for $r = \frac{\widehat{p}}{l \theta}$ and $s=\frac{p}{l(1-\theta)}$. Moreover, $p \in (1,2]$, so we can find a $l \in (0,2)$ such that
\begin{align*}
    \dfrac{l \theta}{\widehat{p}} + \dfrac{l}{p} - \dfrac{l \theta}{p} &=1 \\
    \theta \Big(\dfrac{l}{\widehat{p}} - \dfrac{l}{p} \Big) &=\Big( 1- \dfrac{l}{p} \Big) \\
    \theta &= \Big( \dfrac{\frac{1}{l} - \frac{1}{p}}{\frac{1}{\widehat{p}}-\frac{1}{p}} \Big). \\
    \theta &= \dfrac{\widehat{p}(p-l)}{l(p-\widehat{p})}.
\end{align*}
On comparison with $(\ref{GNS_est})$, we get
\begin{align*}
    \dfrac{(p-l)}{l(p-\widehat{p})} &\ge \dfrac{2-p}{2(\widehat{p}p+\widehat{p}-p)} \\
    \dfrac{p}{l} -1 & \ge \dfrac{(2-p)(p-\widehat{p})}{2(\widehat{p}p+\widehat{p}-p)} \\
    \dfrac{p}{l} & \ge \Big\{ 1 + \dfrac{(2-p)(p-\widehat{p})}{2(\widehat{p}p+\widehat{p}-p)}\Big\} \\
    \dfrac{1}{l} & \ge \dfrac{1}{p}\Big\{ 1 + \dfrac{(2-p)(p-\widehat{p})}{2(\widehat{p}p+\widehat{p}-p)}\Big\} \ge \dfrac{1}{2}.
\end{align*}
since $l \in(0,2).$ This inequality's positive solution space is $[0,2]$. Combining this with the fact that $p>1$, we have $p \in (1,2].$ 

By the help of all these estimates, we can reduce the PDE system to an ODE given by
\begin{equation}\label{ODE}
    Y_t \le MY-\widetilde{C} Y^{\alpha},
\end{equation}
where $Y=||v||_2$ and as $p\in (1,2]$, we can fix $\alpha =\frac{p}{2}\in (0,1).$ In order to prove that $\exists T^*<\infty$ such that $Y \to 0$ as $T \to T^*$, it is enough to prove that $U = \dfrac{1}{Y} \to \infty$ in finite time $T^*$. On using this substitution we get
\begin{align*}
    -\dfrac{1}{U^2} U_t & = \dfrac{M}{U} - \dfrac{\widetilde{C}}{U^\alpha} \\
    U_t & = \widetilde{C} U^{2-\alpha} - M U.
\end{align*}
One using the fact that $\alpha \in (0,1)$, we have
\begin{equation}\label{ODE_blow}
    U_t = U(\widetilde{C} U^{1-\alpha} - M).
\end{equation}
The above ODE \eqref{ODE_blow} blow up in a finite-time if $U(0)>>(\dfrac{M}{\widetilde{C}})^{\frac{1}{1-\alpha}}$. Hence, we have that if $Y(0)<<(\dfrac{M}{\widetilde{C}})^{\frac{1}{1-\alpha}}$, then \eqref{ODE} goes extinct in a finite time.


\end{proof}

\begin{remark}
One direct implication of the result of Theorem \ref{classic_pde} along with Theorem \ref{FFTE} is that we can say:
The slower diffusing population wins - but a slower diffusing population with a few ($k<<1$) ``very fast" diffusing individuals could loose!
\end{remark}

\begin{remark}
Let's consider the numerical validation of Theorem $\ref{FFTE}$ by considering two cases based on the diffusion coefficient of species $u$ and $v$:
\begin{enumerate}
    \item Let's consider case when $d_1$ is close to $d_2$. Pick $d_1=0.2,d_2=0.199$ and $k=0.999\times 10^{-3}$. For classical case, $p=2$, we have $d_2(1-k)+k<d_1$, so $(u,v) \to (0,v^*)$. But for $p=1.6$, we $(u,v) \to (u^*,0).$ (For other parameter, see Fig.~$\ref{fig:pde_ce_sim1}$)
    \item Let's consider case when $d_1$ and $d_2$ are far-apart. Pick $d_1=1,d_2=10^{-4}$ and $k=0.159$. For classical case, $p=2$, we have $d_2(1-k)+k<d_1$, so $(u,v) \to (0,v^*)$. But for $p=1.6$, we $(u,v) \to (u^*,0).$ (For other parameter, see Fig.~$\ref{fig:pde_ce_sim2}$)
\end{enumerate}
\end{remark}

\begin{figure}[h]
\begin{subfigure}[b]{.475\linewidth}
    \includegraphics[width=\linewidth,height=2.1in]{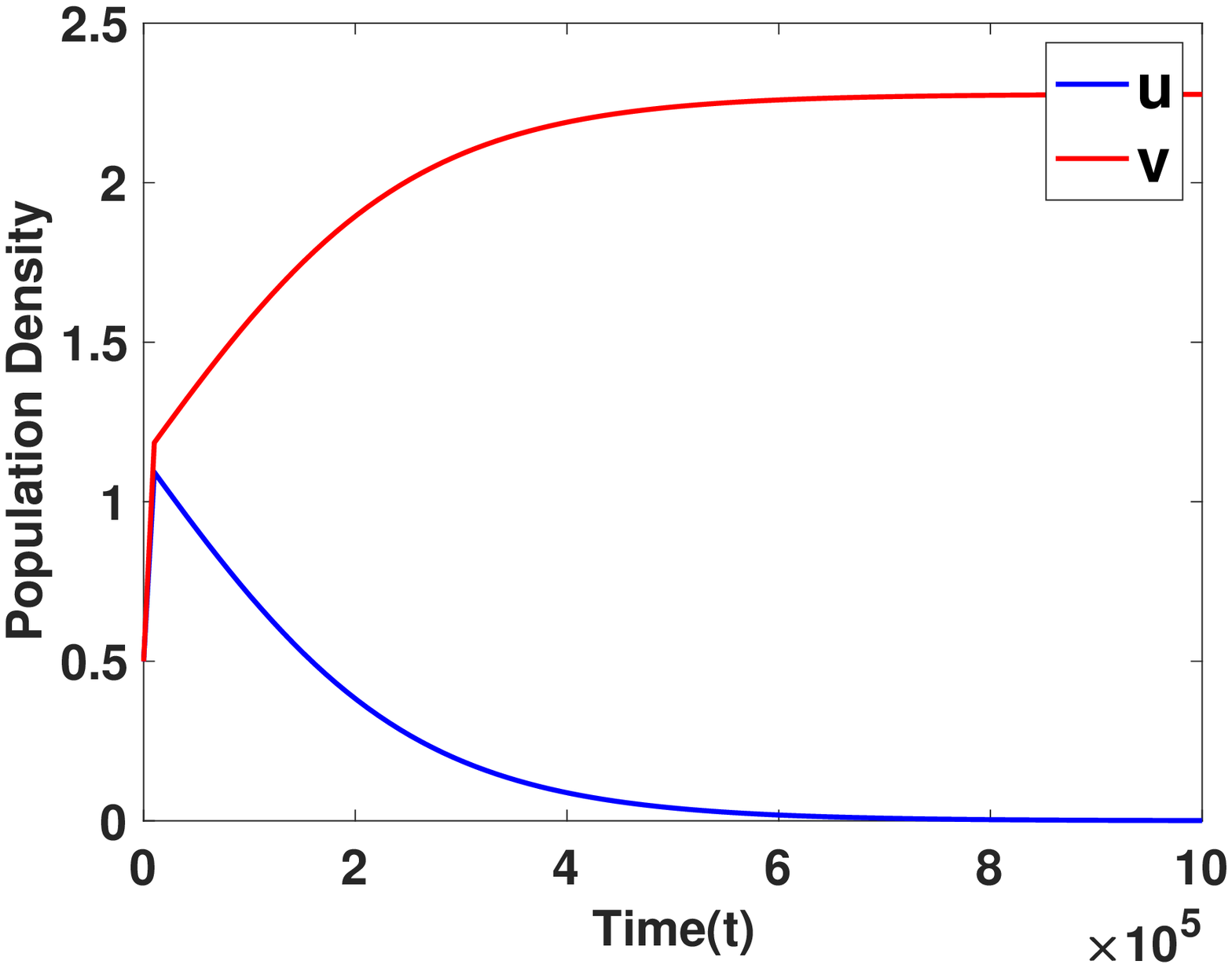}
    \caption{$p=2$}
\end{subfigure}
\hfill
\begin{subfigure}[b]{.475\linewidth}
    \includegraphics[width=\linewidth,height=2.1in]{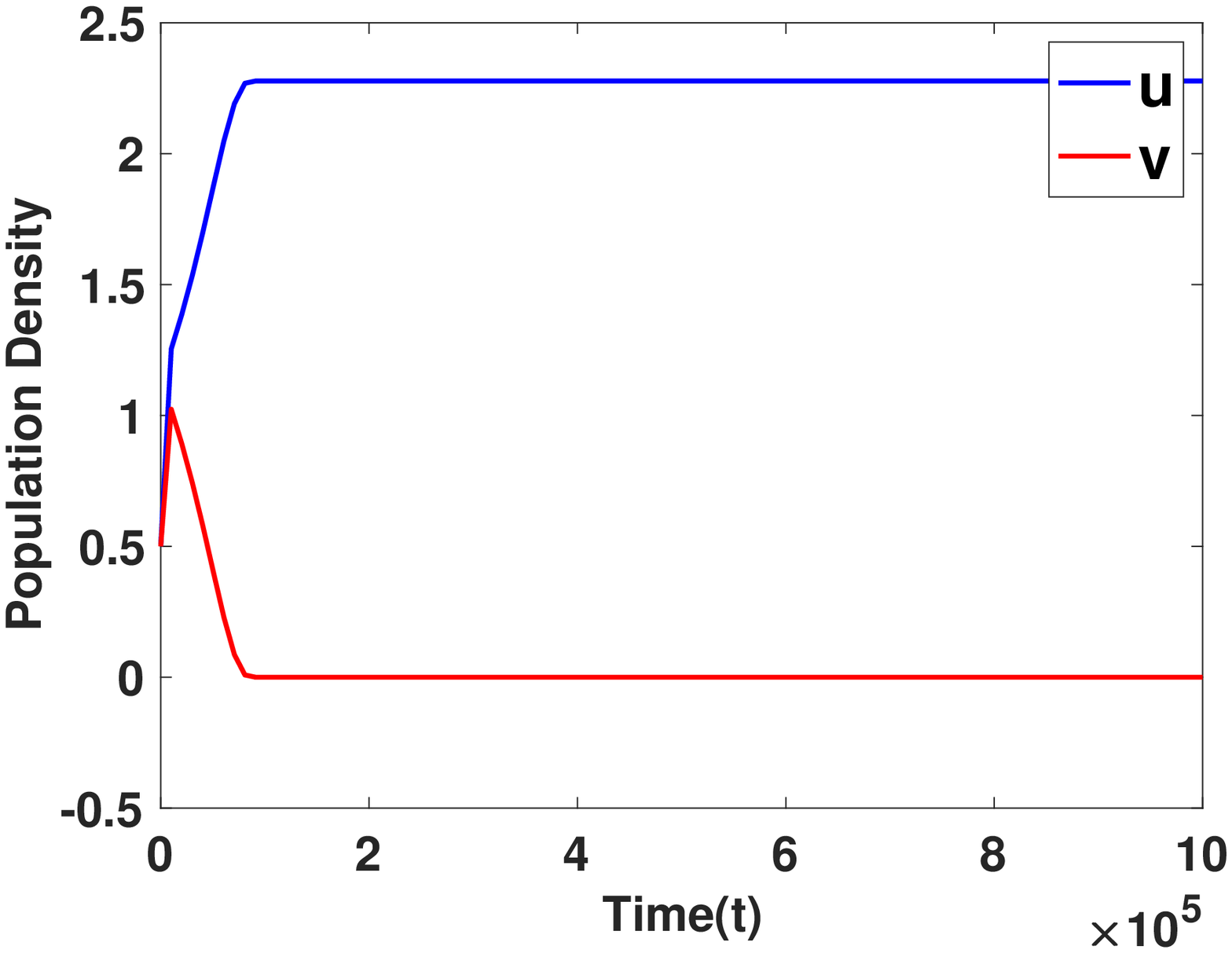}
    \caption{$p=1.6$}
\end{subfigure}
\caption{Numerical simulation of $(\ref{eq:pde_model})$ with $\Omega=[0,1]$ and $m(x)=(3-x)$. The initial data is chosen as $[u_0(x),v_0(x)]=[1.5-x,1.5-x]$, whereas parameters are chosen as $d_1=0.2,d_2=0.199$ and $k=0.99\times 10^{-3}$.}
\label{fig:pde_ce_sim1}
\end{figure}

\begin{figure}[h]
\begin{subfigure}[b]{.475\linewidth}
    \includegraphics[width=\linewidth,height=2.1in]{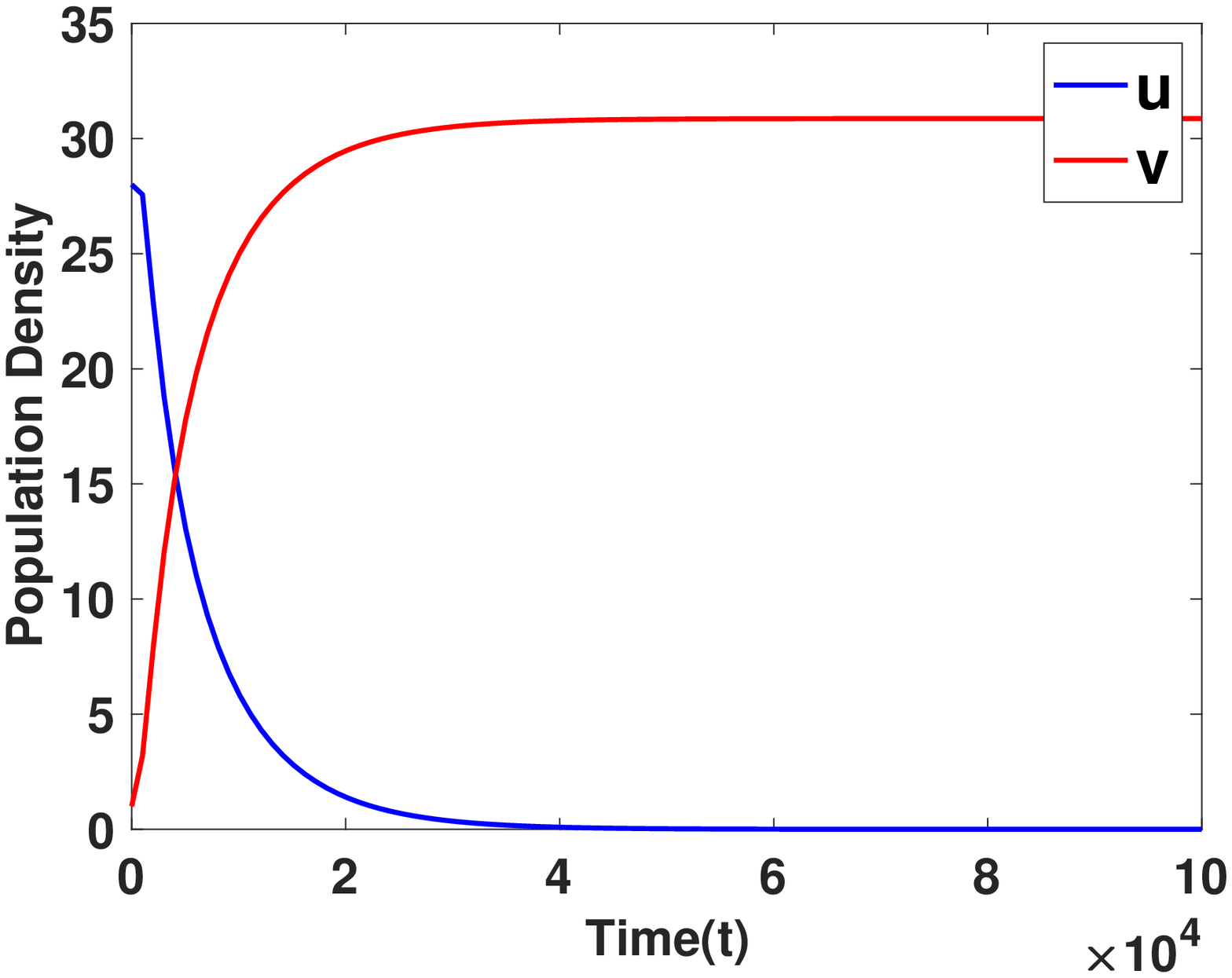}
    \caption{$p=2$}
\end{subfigure}
\hfill
\begin{subfigure}[b]{.475\linewidth}
    \includegraphics[width=\linewidth,height=2.1in]{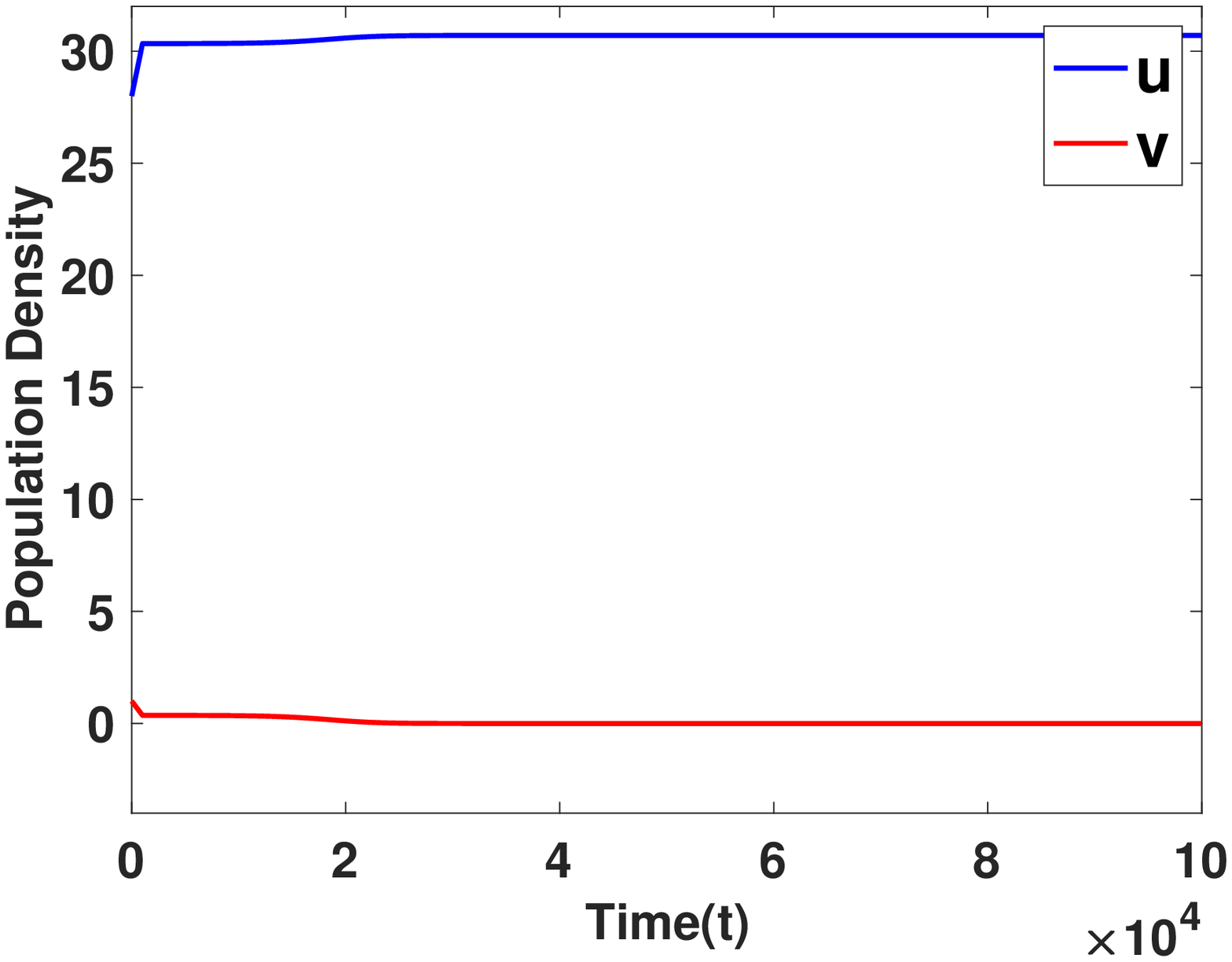}
    \caption{$p=1.6$}
\end{subfigure}
\caption{Numerical simulation of $(\ref{eq:pde_model})$ with $\Omega=[0,1]$ and $m(x)=(30+x^2)$. The The initial data is chosen as$[u_0(x),v_0(x)]=[29-x,x^2]$, whereas parameters are chosen as $d_1=1,d_2=0.0001$ and $k=0.159$.}
\label{fig:pde_ce_sim2}
\end{figure}

\section{Applications}
\subsection{Soybean Aphid Control}

The soybean aphid, \textit{Aphis glycines} (Hemiptera: Aphididae), was first detected in 2000, and since  has become one of the most important insect pests of soybean in the major production areas of the Midwest US(\cite{tilmon2011biology}). It has a heteroecious holocyclic life cycle that utilizes a primary host plant for overwintering and a secondary host plant during the summer. In the spring, aphids emerge and produce three asexual generations on common buckthorn, \textit{Rhamnus cathartica} L., then migrate to soybeans \textit{Glycine max} L.. Aphids continue to reproduce asexually on soybeans, producing as many as 15 generations during the summer (\cite{R11}). In North America, aphids arrive on soybean fields in June, where populations increase by four orders of magnitude and at the end of the growing season (mid-September), aphids begin the migration back to their overwintering host, reproduce sexually and overwinter in the egg stage (\cite{ragsdale2004soybean}). Within Iowa, 40$\%$ of growing seasons from 2004 to 2019, populations of aphids large enough to reduce soybean yield have been observed  with aphid populations peaking in the middle to the end of August (\cite{dean2021developing}).

Colonization and feeding by an insect herbivore can alter the plant’s physiology, favoring the subsequent colonization of additional conspecifics. There are two mechanisms by which this susceptibility can be induced, feeding facilitation and the obviation of resistance. Feeding facilitation is a more general mechanism by which the general physiology of the host plant is altered by the herbivore, often in a density dependent manner (\cite{price2011insect}). \\

A more specific mechanism that inducts susceptibility is the obviation of traits that confer resistance to the herbivore (\cite{baluch2012obviation}; \cite{ent_soc_am_matt}). This mechanism requires a subset of the herbivore population that is virulent, capable of surviving on the resistant genotype of the host plant. By obviating the resistance through a physiological change to the plant, avirulent subpopulations can now survive on the resistant plant. Both mechanisms allow sub-populations that vary by genotype (i.e. virulent and avirulent) to co-exist on resistant host plants.  Both mechanisms have been observed in populations of soybean aphids, colonizing soybean plants of varying genotypes that includes resistance to soybean aphids (\cite{ent_soc_am_matt}).

Field surveys in North America have demonstrated that soybean aphid biotypes can co-occur in the same fields (\cite{cooper2015geographic}, \cite{alt2019geographic}). Laboratory studies have demonstrated that virulent and avirulent biotypes can co-exist on a shared plant for at least 2-3 generations (\cite{ent_soc_am_matt}).

\subsection{Model Formulation}
\subsubsection{Background: The Single Bio Type Case}
 Kindlemann et.al. 2010 (\cite{kindlmann2010modelling}) are among the first to propose a model for the population dynamics of Aphids  using a set of differential equations,
\begin{eqnarray}
\label{eq:m1}
  \frac{dh}{dt}&=&ax ;  h(0)=0 \nonumber\\ 
     \frac{dx}{dt}&=&(r-h)x \ ;  x(0)=x_0\nonumber\\
     \nonumber
\end{eqnarray}

where $h(t)$ is the cumulative population density of a single aphid biotype at time $t$; $x(t)$ is the population density at time $t$. $a$ is a scalar constant and $r$ is the growth rate of the aphids. The aphid population initially rises due to the linear  growth term, but as the cumulative density becomes greater than the growth rate $r$, the population is brought down, due to the effects of competition. This results in a hump-shaped population density over time, typical of a boom-bust type scenario (\cite{kindlmann2010modelling}). This is an apt description of aphid dynamics, particularly when one is interested in exploring soybean aphid dynamics on soybeans during the summer growing season. The type of population growth described by this model has been observed in soybean aphids in North America, with colonisation in June, then a gradual build-up of the population with a high peak in August and then a reduction, with all of the Aphids dispersing by September, for overwintering.\\ 

The model in \eqref{eq:m1}, is quite different from the classical logistic growth model, which predicts growth to a certain carrying capacity. It is an example of a non-autonomous model, wherein the right-hand side of the differential equation depends explicitly on time. The rigorous mathematical analysis of such systems is quite involved, and the methods of classical autonomous systems do not apply. Hence the rigorous dynamical analysis of \eqref{eq:m1} is not found in the literature. However, it provides a starting point to model more intricate aphid dynamics, particularly when a species presents two or more biotypes.

\subsubsection{Virulent and avirulent aphids: Two Bio types}

Sub-populations of an herbivorous species can be organized into biotypes, defined as genotypes capable of surviving and reproducing on host plants containing traits (e.g., antibiosis and or antixenosis) conferring resistance to that herbivore (\cite{downie2010baubles}). Specifically, for the soybean aphid, biotypes are classified based on their ability to colonize soybean varieties expressing \textit{Rag}-genes(Rag is derived from the expression, Resistance to \textit{Aphis glycines}). For example, soybean aphid biotype 1 is susceptible to all \textit{Rag}-genes, therefore it is called avirulent. Biotype 2 is virulent to \textit{Rag}1 (\cite{kim2008discovery}), biotype 3 is virulent to \textit{Rag}2 (\cite{hill2010new}), and biotype 4 is virulent to both \textit{Rag}1 and \textit{Rag}2, capable of surviving on plants with these genes either alone and together (\cite{alt2013soybean}). These four soybean aphid biotypes have been found throughout the soybean production areas in the Midwest US (\cite{cooper2015geographic}, \cite{alt2019geographic}).\\

If one considers a resistant soybean plant, where virulent and avirulent aphids are both trying to colonize, various dynamics are at play, that \emph{cannot} be described by earlier models. First, the virulent and avirulent are in direct competition for space, similar to interspecies competition. The virulent aphids are also in competition for space with other virulent aphids, as avirulent aphids are in competition for space with other avirulent aphids, similar to the intraspecies competition. These are direct effects of competition. Note, on a resistant plant both the avirulent and virulent aphids are able to weaken the plant's defenses via feeding facilitation. However, for the avirulent aphid this only occurs if it arrives in sufficiently large numbers (\cite{ent_soc_am_matt}). Thus there is a definite resistant level in the plant that is dependent on initial avirulent aphid density. If the avirulent aphids arrive in sufficient numbers above this level, they could colonise a resistant plant, but below this will die out - this is very similar to a resistance effect in ecology. Note the virulent biotype alters the plant by obviating the resistance, removing its impact on both virulent and avirulent aphids. This removal of the plants' resistance level by the virulent biotype, eases the colonisation process for the avirulent biotype - this then is an indirect form of cooperation at play. Thus, the plants' resistance is a dynamic process, dependent on the presence and densities of these biotypes. The following model is proposed to capture these dynamics,

\begin{eqnarray}
\label{aphid_Classic}
  \frac{dh}{dt}&=&a(x_A+ x_V ) \nonumber\\ 
    \frac{dx_A}{dt}&=&(r-h)(x_A-A) \nonumber\\ 
  \frac{dx_V}{dt}&=&(r-h)x_V  \nonumber\\
   \frac{dA}{dt}&=&-(k_r x_V+k_f x_V+k_f sgn(x_A-R) x_A)A \nonumber  \nonumber 
\end{eqnarray}
    Here \textit{$x_A(t)$} refer to avirulent aphid population density and \textit{$x_V(t)$} refers to the virulent aphid population density \cite{Evol:matt}; 
    \textit{$h$} is the combined cumulative population density of both avirulent and virulent aphids respectively at time \textit{t};
    \textit{r} is the maximum potential growth rate of the Aphid;\\ 
    \textit{a} is a scaling constant relating prey cumulative density to its own dynamics;
    \textit{A} is the dynamic resistance threshold of the plant \cite{thesis:berec}. This decreases due to both avirulent and virulent aphid density, that is \textit{$x_V$} and \textit{$x_A$}. It is measured in the same units as Aphid density.\\
    \textit{$k_f$} is the rate of feeding facilitation constant and \textit{$k_r$} is the rate of obviation of resistance\cite{ent_soc_am_matt}
    \textit{R} is the threshold population density above which an avirulent population can have a feeding facilitation effect on the plant and below it, the effect of feeding facilitation because of avirulent aphid is zero;\\
    \textit{sgn(x)} is a function returning 0 or 1. It returns 1 if $x>0$ or else it returns 0; This function regulates whether avirulent aphids have enough initial density to have the effect of feeding facilitation on the plant.
    Previous studies have demonstrated that obviation of resistance is much more effective in shutting down the resistance than feeding facilitation \cite{ent_soc_am_matt}. Therefore, \textit{$k_r$}$>$\textit{$k_f$} whenever both effects take place simultaneously.

    \subsubsection{Harvesting of avirulent aphids}
        If a similar harvesting technique is used as in system \ref{Eqn:2} and a particular aphid population is harvested then the dynamics for aphid bio-type dominance in the system can change. We modify the system \ref{aphid_Classic} with fast diffusion on each aphid bio-type separately and study the system.
        
     The Virulent aphid are harvested out of the system \ref{aphid_Classic} which results in the modified model 
    \begin{eqnarray}
    \label{aphid_vir_eq}
    \frac{dh}{dt}&=&a(x_A+ x_V ) \nonumber\\ 
    \frac{dx_A}{dt}&=&(rq-h)(x_A-A)- (1-q)x_A^p \nonumber\\ 
    \frac{dx_V}{dt}&=&(r-h)x_V  \nonumber\\
    \frac{dA}{dt}&=&-(k_r x_V+k_f x_V+qk_f sgn(x_A-R) x_A)A 
    \end{eqnarray}

\begin{figure}[H]
    \begin{subfigure}[b]{.475\linewidth}
        \includegraphics[width=\linewidth]{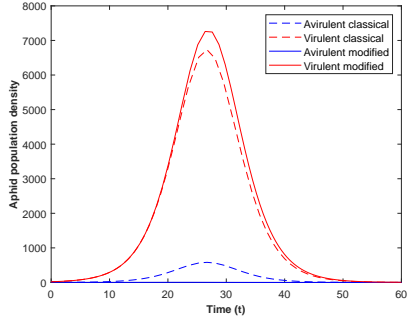}
        \caption{The initial conditions used for the time series are $(h(0),x_A(0),x_V(0),A(0)) = (0,15,20,30)$ }
        \label{low_Avi_low_vi}
    \end{subfigure}
    \hfill
    \begin{subfigure}[b]{.475\linewidth}
        \includegraphics[width=\linewidth]{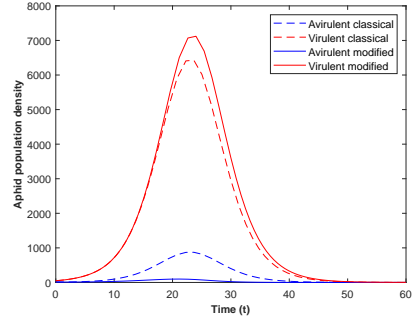}
        \caption{The initial conditions used for the time series are $(h(0),x_A(0),x_V(0),A(0)) = (0,15,50,30)$}
        \label{low_avi_high_vi}
    \end{subfigure}

    \begin{subfigure}[b]{.475\linewidth}
        \includegraphics[width=\linewidth]{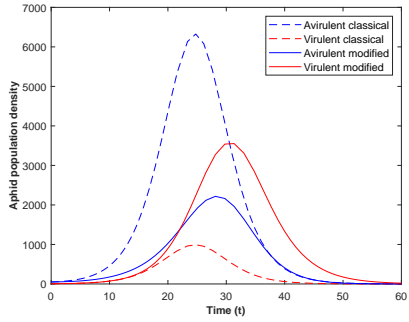}
        \caption{The initial conditions used for the time series are $(h(0),x_A(0),x_V(0),A(0)) = (0,50,5,30)$. }
        \label{high_Avi_low_vi}
    \end{subfigure}
    \hfill
    \begin{subfigure}[b]{.475\linewidth}
        \includegraphics[width=\linewidth]{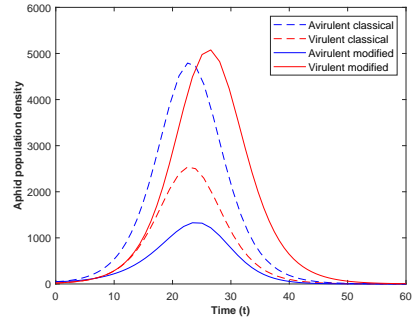}
        \caption{The initial conditions used for the time series are $(h(0),x_A(0),x_V(0),A(0)) = (0,50,20,30)$ }
        \label{high_both_1}
    \end{subfigure}
    \hfill
    \begin{subfigure}[b]{.475\linewidth}
        \includegraphics[width=\linewidth]{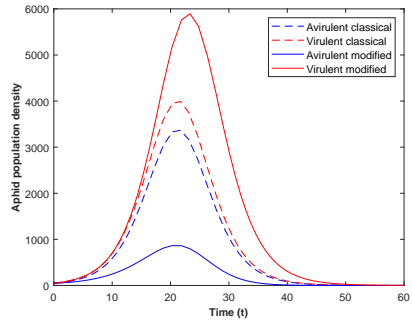}
        \caption{The initial conditions used for the time series are $(h(0),x_A(0),x_V(0),A(0)) = (0,50,50,30)$ }
        \label{high_both}
    \end{subfigure}

    \caption{The dynamics of the virulent aphid($x_V$) and avirulent aphid $(x_A)$ is shown in fig \ref{low_Avi_low_vi} - \ref{high_both}.The dotted lines represent the dynamics of system \ref{aphid_Classic} and the dense line represents the dynamics of system \ref{aphid_vir_eq}. The red colour represents virulent aphid ($x_V$) and blue colour represents the avirulent aphid population ($x_A$). Figures \ref{low_Avi_low_vi}-\ref{high_both} have the same parameter set $r=0.27, a=0.000005, k_f=0.001, k_r=0.01, R=30, k=1.$ }
    \label{aphid_vir}
\end{figure}

\subsection{Effect of avirulent aphid harvesting}
\hfill\\
    In the figures \ref{aphid_vir} the effect of harvesting avirulent aphids is seen compared to the original results
    \begin{enumerate}
        \item Harvesting of avirulent aphids decreases the peak population of avirulent aphid as seen in the figures. When the avirulent aphid population is below the initial threshold the avirulent aphids go to extinction
        \item The peak population for both the biotypes of aphid occurs at different times as can be seen in figure \ref{high_Avi_low_vi}. 
    \end{enumerate}

\section{Appendix}

\begin{proof} of lemma \ref{Boundary_v}: 
When $\Bar{u}=0$ when studying the boundary equilibrium $E_v(0,\Bar{v})$, the nullcline (\ref{nullcline}) simplifies to $\phi(\Bar{v})$ given by,\\
\\
$\phi(v)=a_q-b_2\Bar{v}-(1-q)\Bar{v}^{p-1}$

To study the roots of the polynomial $\phi(\Bar{v})$, we study the monotonicity of the polynomial which is described by,\\
$\phi^{'}(v)= -b_2 + (1-q)(1-p)v^{p-2}$.
\\
Thus, the extrema is attainable when $\phi^{'}(v)=0$. We see, it is possible to attain extrema at $v=v_{\phi}$ where $v_{\phi}^{p-2}= \frac{b_2}{(1-q)(1-p)} >0 $ as $b_2>0$ and $0 < p,q < 1$.

As $\phi^{''}(v_{\phi}) = (1-q)(1-p)(p-2)v_\phi^{p-3}>0$, we have a maxima at $v=v_\phi$. Now as $v_\phi$ is unique for the fixed parameter set so in the invariant set we have unique maxima at $v=v_\phi$. \\
As there exist one maxima, we can deduce there can be at most two roots of $\phi(\Bar{v})$ and may have no roots depending on the functional value $\phi(v_{\phi})$.
Computing we get,
$\phi(v_{\phi})= a_2q-b_2v_\phi(\frac{2-p}{1-p})$.\\
If $\phi(v_{\phi})>0$ i.e. $v_\phi < \frac{a_2q(1-p))}{b_2(2-p)}$ then there exists two boundary equilbria. When $v_\phi > \frac{a_2q(1-p))}{b_2(2-p)}$ then there are no boundary equilibrium of the form $E_v(0,\Bar{v})$.

\end{proof}

\begin{proof} of lemma \ref{b_1b_2-c_1c_2<0 exists}: 
 Let there exist a positive $v^*$ which is a root of the polynomial $\phi(v)$ given by,\\ 
 
$\phi(v^*)=(a_2b_1q-c_2a_1)+(c_1c_2-b_1b_2)v^*-b_1(1-q)v^{*(p-1)}$ where $b_1b_2-c_1c_2 \leq 0$.\\

To understand the monotonicity of the curve we study the slope given by,\\
 $\phi'(v^*)=(c_1c_2-b_1b_2)-b_1(p-1)(1-q)v^{*(p-2)}$
We know, $0<p,q<1$ and $v^*>0$ so $b_1(p-1)(1-q)v^{*(p-2)}>0$. Thus, $\phi'(v^*)>0$ which shows $\phi(v^*)$ is a monotonically increasing curve.\\
For $v\to 0$, $\phi(v) \to  -\infty$ and for $v\to \infty$, $\phi(v) \to  \infty$ so $\min(\phi(v))<0$.\\
 
Thus, there exists a positive $c_1$ such that $\phi(c_1)<0$ and $\exists$ positive $c_2$ such that $\phi(c_2)>0$. By using the Mean Value theorem  we can conclude that there exists a  $v^* \in (c_1,c_2)$ such that $\phi(v^*)=0$ and as $\phi(v)$ is monotonically increasing so $v^*$ is unique.
 
 Thus if $(b_1b_2-c_1c_2) \leq 0$ then there can only be one positive $v^*$ such that $\phi(v^*)=0$.
 
 \end{proof}

\begin{proof} of lemma \ref{stab_interior}: 
    According to lemma \ref{Boundary_v} we can have two, one or no boundary equilibria of the form $E_v(0,\Bar{v})$ depending on the functional value of the nullcline. 
We study when we have two boundary equilibria $E_{v_1}(0,\Bar{v_1})$ and $E_{v_2}(0,\Bar{v_2})$. Without loss of generality, lets assume $v_1<v_2$. By lemma \ref{Boundary_v} we know $v_\phi$ is the maxima of the nullcline. By Mean Value Theorem, we can assume  $v_1< v_\phi < v_2$.\\
To study the stability of the equilibria we simplify the Wronskian matrix and have\\
$W(E_v)=\begin{bmatrix}
a_1-c_1v & -c_1u \\
-c_2v & -b_2v+(1-p)(1-q)v^{p-1}
\end{bmatrix}$.

The eigenvalues for the system are $\lambda_1=a_1-c_1v$ and $\lambda_2=v(-b_2+(1-p)(1-q)v^{p-2})$.\\
We study the case when there exist two boundary equilibria.
When $v_2>v_\phi$ where $v_{\phi}^{p-2}=\frac{b_2}{(1-q)(1-p)}$, then \\
$\lambda_2=v_2(-b_2+(1-p)(1-q)v_2^{p-2})$\\
$< v_2(-v_\phi^{p-2}(1-p)(1-q)+(1-p)(1-q)v_2^{p-2})$.\\
$=v_2(1-p)(1-q)(v_2^{p-2}-v_\phi^{p-2})$.\\
As $v_\phi<v_2$ and $p-2<0$ we have $\lambda_2 < 0$.\\

If we simplify $\lambda_1= a_1-c_1v_1$, then $\lambda_1<0$ if $v_2 > \frac{a_1}{c_1}$. Thus $E_{v_2}(0,v_2)$ is stable node when $v_2>a_1/c_1$.\\

The determinant of wronskian is given by at $E_{v_1}$,\\
$det(W(E_{v_1}))=v_1(a_1-c_1v_1)(-b_2+(1-p)(1-q)v_1^{p-2})$.\\
As $v_1<v_\phi$ so, $det(W(E_{v_1})) = v_1(a_1-c_1v_1)(-b_2+(1-p)(1-q)v_2^{p-2})$\\
$< v_1(a_1-c_1v_1)(1-p)(1-q)(v_\phi^{p-2}-v_1^{p-2})=0$.\\
Thus $E_{v_1}(0,v_1)$ is a saddle point. 


\end{proof}

\begin{proof} of lemma \ref{unstab_interior}: 
According to lemma \ref{b_1b_2-c_1c_2<0 exists} if $(b_1b_2-c_1c_2)\leq 0$ there exists an unique equilibrium $E^*(u^*,v^*)$. The simplified Jacobian matrix for the unique equilibrium is given by,\\
$\begin{bmatrix}
b_1u & -c_1u \\
-c_2v & -b_2v+(1-p)(1-q)v^{p-1}
\end{bmatrix}$

The determinant of the Jacobian matrix can be given by,\\
$(-b_1u^*)(-b_2v^*+(1-p)(1-q)v^{*(p-1)}-c_1c_2u^*v^*$\\
By simplifying we get,
$det(J(E^*))=(b_1b_2-c_1c_2)v^*-b_1(1-p)(1-q)v^{*(p-1)}$. As $(b_1b_2-c_1c_2)\leq 0$ and $0<p,q<1$ so det$(J(E^*))$ is always negative.\\
Thus $E(u^*,v^*)$ is always an unstable point.
\end{proof}

\begin{proof} of lemma \ref{stable_equi}: 
According to lemma \ref{th:existence} if $(c_1c_2-b_1b_2)< 0$ and $(a_2b_1q-c_2a_1)+(c_1c_2-b_1b_2)(1+\frac{1}{1-p})v_{max}>0$ where $v_{max}=-\frac{(1-q)(1-p)}{(c_1c_2-b_1b_2)}^{\frac{1}{2-p}}$ then there exists two interior equilibrium points.
We first study when we can get a stable interior equilibrium.\\
The conditions to be met for a stable equilibrium are:\\
\begin{itemize}
    \item $(a_2b_1q-c_2a_1)+(c_1c_2-b_1b_2)(1+\frac{1}{1-p})v_{max}>0$ (Existence condition)
    \item Trace= $-b_1u^*-b_2v^*+(1-p)(1-q)v^{*(p-1)}<0$
    \item Determinant= $(-b_1u^*)(-b_2v^*+(1-p)(1-q)v^{*(p-1)}-c_1c_2u^*v^*>0$
\end{itemize}
By simplifying the trace we get 

$(1-q)V^{*(p-1)}<\frac{a_1+(b_2-c_1)v^*}{1-p}$

$qa_2b_1-a_1c_2+(c_1c_2-b_1b_2)v^*<\frac{a_1+(b_2-c_1)v^*}{1-p}$

So, $q<\frac{1}{a_2b_1(1-p)}(a_1+a_1c_2(1-p)+((b_2-c_1)+(b_1b_2-c_1c_2)(1-p))v^*)$

Simplifying determinant we get,\\
$(1-q)v^{*(p-1)}<\frac{(b_1b_2-c_1c_2)v^*}{(1-p)b_1}$

$qa_2b_1-a_1c_2+(c_1c_2-b_1b_2)v^*<\frac{(b_1b_2-c_1c_2)v^*}{(1-p)b_1}$

$q<\frac{1}{a_2b_1(1-p)}(a_1c_2(1-p)+(b_1b_2-c_1c_2)\frac{2-p}{b_1}v^*)$

Combining the results we can say that the interior equilibrium point is locally stable if \\
$q<\frac{1}{a_2b_1(1-p)}\min(a_1+a_1c_2(1-p)+((b_2-c_1)+(b_1b_2-c_1c_2)(1-p))v^*,a_1c_2(1-p)+(b_1b_2-c_1c_2)\frac{2-p}{b_1}v^*)$.

From the existing condition we get that,
$q>\frac{1}{a_2b_1(1-p)}((b_1b_2-c_1c_2)(2-p)v_{max}+c_2a_1(1-p))$

\end{proof}

\end{document}